\documentclass[11pt,reqno]{amsart}
\usepackage{amscd}
\usepackage{amssymb}
\usepackage{amsthm}
\usepackage{amsfonts}
\usepackage{amsmath}
\usepackage{appendix}
\usepackage{arydshln}
\usepackage{boldline}
\usepackage{booktabs}
\usepackage{cancel}
\usepackage{comment}
\usepackage{calc}
\usepackage{color}
\usepackage{colortbl}
\usepackage{colonequals}
\usepackage{enumerate}
\usepackage{epstopdf}
\usepackage{fdsymbol}
\usepackage[margin=1.2in,letterpaper]{geometry}   
\usepackage{graphicx}
\usepackage{latexsym}
\usepackage{leftindex}
\usepackage{longtable}
\usepackage{makecell}
\usepackage{mathtools}
\usepackage{mhchem}
\usepackage{multirow}
\usepackage{nicematrix}
\usepackage{soul}
\usepackage{tensor}
\usepackage{tikz-cd}
\usepackage{xcolor}
\usepackage[all]{xy}
\usepackage{xypic}
\usepackage{ragged2e}

\usepackage[utf8]{inputenc}
\usepackage[colorlinks]{hyperref}
\hypersetup{
 colorlinks=true,
 linkcolor=blue,
 filecolor=magenta, 
 urlcolor=cyan,
}
\usepackage[nameinlink, capitalize]{cleveref}
\usepackage{colonequals}
\allowdisplaybreaks[1]

\newtheorem{theorem}{Theorem}[section]

\newtheorem*{introtheorem*}{Main Theorem}

\newtheorem{corollary}[theorem]{Corollary}
\newtheorem{lemma}[theorem]{Lemma}
\newtheorem{proposition}[theorem]{Proposition}

\theoremstyle{definition}
\newtheorem{definition}[theorem]{Definition}
\newtheorem*{defn*}{Definition}
\newtheorem{chunk}[theorem]{}
\newtheorem{ex}[theorem]{Example}

\newtheorem{remark}[theorem]{Remark}

\newtheorem{notation}[theorem]{Notation}
\newtheorem{setting}[theorem]{Setting}
\newtheorem{terminology}[theorem]{Terminology}
\theoremstyle{remark}

\numberwithin{equation}{theorem}
\newcommand{\BZ}{{\mathbb Z}}
\newcommand{\bsa}{{\boldsymbol a}}
\newcommand{\bsh}{{\boldsymbol h}}
\newcommand{\bsx}{{\boldsymbol x}}

\newcommand{\bsal}{{\boldsymbol \alpha}}
\newcommand{\bsbe}{{\boldsymbol \beta}}
\newcommand{\bsga}{{\boldsymbol \gamma}}
\newcommand{\bsla}{{\boldsymbol \lambda}}

\newcommand{\cA}{\mathcal{A}}
\renewcommand{\cD}{\mathcal{D}}

\newcommand{\wtP}{{\widetilde P}}
\newcommand{\wtR}{{\widetilde R}}
\newcommand{\wtA}{{\widetilde A}}
\newcommand{\wtI}{{\widetilde I}}
\newcommand{\wtJ}{{\widetilde J}}
\newcommand{\wtG}{{\widetilde G}}

\newcommand{\kk}{{\sf{k}}}
\newcommand{\m}{{\mathfrak m}}
\newcommand{\fa}{{\mathfrak a}}
\newcommand{\n}{{\mathfrak n}}
\newcommand{\p}{{\mathfrak p}}

\newcommand{\fh}{{\mathfrak h}}
\def\aa{\operatorname{\mathfrak a}}
\def\Hilb{\operatorname{Hilb}}
\def\Po{\operatorname{P}}
\def\Qo{\operatorname{Q}}
\def\Co{\operatorname{C}}

\def\ann{\operatorname{ann}}

\def\ch{\operatorname{char}}

\def\rank{\operatorname{rank}}
\def\reg{\operatorname{reg}}

\def\Tor{{\operatorname{Tor}}}
\def\Ext{{\operatorname{Ext}}}

\newcommand{\HH}{\operatorname{H}}
\def\Ker{\operatorname{Ker}}

\def\lin{\operatorname{lin}}
\def\Soc{\operatorname{Soc}}

\def\Span{{\operatorname{Span}}}
\def\ov{\overline}

\def\rate{\operatorname{rate}}

\newcommand{\dd}{\partial}
\newcommand{\Pu}[1]{P_{\,[#1]}}

\newcommand{\xra}{\xrightarrow}
\title[]{Homological properties of rings\\ defined by $n+1$ general quadrics in $n$ variables}

\keywords{Almost complete intersection, general quadrics, Betti numbers, Backelin rate}
\subjclass[2020]{13D02, 13D40, 13C13, 13C40}

\author[Diethorn]{Rachel Diethorn}
\email{rdiethor@oberlin.edu}
\address{Department of Mathematics, Oberlin College, Oberlin, OH}
\author[G\"unt\"urk\"un]{Sema G\"unt\"urk\"un}
\email{s.gunturkun@essex.ac.uk}
\address{School of Mathematics, Statistics and Actuarial Science, University of Essex, UK}
\author[Hardesty]{Alexis Hardesty}
\email{ahardesty1@twu.edu}
\address{Division of Mathematics, Texas Woman's University, Denton, TX}
\author[Mete]{Pinar Mete}
\email{pinarm@balikesir.edu.tr}
\address{Department of Mathematics, Balikes\.{ı}r University, Turkey}
\author[\c{S}ega]{Liana \c{S}ega}
\email{segal@umkc.edu}
\address{Division of Computing, Analytics and Mathematics, University of Missouri-Kansas City, Kansas City, MO}
\author[Sobieska]{Aleksandra Sobieska}
\email{sobieskasnyd@marshall.edu}
\address{Department of Mathematics \& Physics, Marshall University, Huntington, WV}
\author[Veliche]{Oana Veliche}
\email{o.veliche@northeastern.edu}
\address{Department of Mathematics, Northeastern University, Boston, MA}

\begin{document}

%%%%%%%%%%%%%%%%%%%%%%%%%%%%%%%%%%%%%%%%%%%%%%%%%%%%%%%%%%%%%%%%%%%%%%%%%%%%%%%%%%%%%%%%%%%%%%%%%%%
\begin{abstract}
We study the almost complete intersection ring $R$ defined by $n+1$ general quadrics in a polynomial ring in $n$ variables over a field $\kk$ and a corresponding linked Gorenstein ring $A$. The overarching theme is that, while not Koszul (except for some small values of $n$), these rings have homological properties that extend those of Koszul rings.  We establish that finitely generated modules over these rings have rational Poincar\'e series and we give concrete formulas for the Poincar\'e series of $\kk$ over both $A$ and $R$. We also show that $A$ has minimal rate and its Yoneda algebra $\Ext_A(\kk,\kk)$ is generated by its elements of degrees $1$ and $2$.  While the graded Betti numbers of $R$ and $A$ over the polynomial ring are not known when $n$ is odd, our approach provides bounds and yields values for two of these Betti numbers, showing in particular that $R$ is level. 
\end{abstract}
\maketitle
%%%%%%%%%%%%%%%%%%%%%%%%%%%%%%%%%%%%%%%%%%%%%%%%%%%%%%%%%%%%%%%%%%%%%%%%%%%%%%%%%%%%%%%%%%%%%%%%%%%
\section{Introduction}

Polynomial rings over a field have the simplest homological behavior among graded algebras.  Namely, minimal free resolutions of finitely generated graded modules over these rings are finite, as given by Hilbert's syzygy theorem. There are two well-studied extensions of this kind of simple homological behavior. One extension is to the class of graded complete intersection rings, over which all finitely generated graded modules have a rational Poincar\'e series, and Betti numbers exhibit polynomial asymptotic growth; see \cite{Gulliksen74}. A second extension is to the class of Koszul algebras, over which any finitely generated graded module has finite regularity; see \cite{AP}. At the intersection of these two classes of rings lie quadratic complete intersections, which are arguably closest in homological behavior to polynomial rings. One obtains such rings when taking quotients of a polynomial ring in $n$ variables by $r\le n$ quadrics that are assumed 
{\it general},  meaning that they are parametrized by points in an appropriate nonempty Zariski open set. In this paper we consider quotients by  $n+1$ general quadrics, in which case the rings obtained are neither complete intersections nor Koszul (except for small values of $n$). 

An ideal defined by $n+1$ general forms in $n$ variables is an {\it almost complete intersection} (a.c.i.); namely its height is one less than the minimal number of generators. 
Results on monomial complete intersections, due to  Stanley \cite{Stanley80} and also Watanabe \cite{Wat} in characteristic zero and Cook \cite{cook} in positive characteristic, imply that such a.c.i.~ideals have the Strong Lefschetz Property under appropriate characteristic assumptions, and hence the Hilbert functions of the
quotient rings agree with the prediction of Fr\"oberg's conjecture given in \cite{Froberg-85}. Additionally, calculations of their Betti numbers over the polynomial ring have been provided in some cases, while conjectures have been proposed for other cases by Migliore and Mir\'o-Roig in \cite{MM}. 

Our contribution is an investigation of Poincar\'e series and Betti numbers over such rings when the a.c.i.~ideal is generated by general quadrics, and also over Gorenstein rings obtained from the a.c.i.~through linkage. In this setting, our results reveal homological behavior that is not too far from that of quadratic complete intersections. 

Before stating our  results, we recall, for comparison, that for a standard graded $\kk$-algebra $R$ the following statements are equivalent to the fact that $R$ is Koszul:
\begin{enumerate}
\item $\Po_\kk^R(t,u)=\Hilb_R(-tu)^{-1}$.
\item $\rate(R)=1$.
\item The Yoneda algebra $\Ext_R(\kk,\kk)$ is generated by its elements of degree $1$. 
\end{enumerate}
Here, $\Po_\kk^R(t,u)$ denotes the bigraded Poincar\'e series of $\kk$ over $R$, which is the generating function for the sequence of graded Betti numbers, $\Hilb_R(z)$ is the Hilbert series of $R$, and $\rate(R)$ (the Backelin rate of $R$) is an invariant that gives information about the shape of the Betti table of $\kk$ over $R$, see \cref{def:series-rate} for more details. In general, the rate of a singular ring $R$ is greater than or equal to $m-1$, where $m$ is the largest degree of a minimal generator of the defining ideal of $R$. Algebras with $\rate(R)=m-1$ are said to have {\it minimal rate}, and this property can be viewed as generalizing the Koszul property to non-quadratic algebras. 

The next theorem collects our main results. The terminology of {\it general quotient defined by $n+1$ quadrics} used here is made precise in \cref{def: generic}, where we first identify the parameter space for such rings and then we say that a property holds for a
general quotient if it holds for all parameter points in some nonempty Zariski open subset of that space.

\begin{introtheorem*}\label{introtheorem}
Let $n\ge 2$ be an integer and set $\ell=\left\lfloor\frac{n-2}{2}\right\rfloor$. 
Let $\kk$ be an infinite field of characteristic zero or greater than $n$ and consider the polynomial ring $Q=\kk[x_1, \dots, x_n]$. 
\begin{enumerate}[\rm (A)]
    \item A general quotient $R$ of $Q$ defined by $n+1$ quadrics $f_1, \dots, f_{n+1}$ has the following properties:
\begin{enumerate}[\quad$(1)$]
  \item The Poincar\'e series of $\kk$ over $R$ is given by 
  \vspace{0.1pt}
  
\[\begin{aligned}
  \Po_\kk^R(t,u)&={\displaystyle \frac{(-t)^{\ell}}{\Hilb_R(-tu)+(1-tu)^n\bigl((-t)^{\ell}-1\bigr)(1-t^2u^2)}}.
  \end{aligned}
  \]
  \vspace{0.1pt}
 
  Moreover, all finitely generated graded $R$-modules have rational Poincar\'e series.
  \item $\rate(R)=\frac{\ell}{2}+1$.
  \item $R$ is level, with socle in degree $n-\ell-1$ and type given by the Catalan number $C_{\ell+2}$.  
  \end{enumerate}  
  \medskip
  
\item Furthermore, when $n\ge 4$ the Gorenstein ring $A=Q/G$ with $G=(f_1, \dots, f_n)\colon (f_{n+1})$ associated to $R$ has the following properties:

 \begin{enumerate}[\quad$(1)$]
  \item The Poincar\'e series of $\kk$ over $A$ is given by
   \vspace{0.1pt}
   
  \[\begin{aligned}
  \Po_\kk^A(t,u)&={\displaystyle \frac{(-t)^{\ell-1}}{\Hilb_A(-tu)+(1-tu)^n\bigl((-t)^{\ell-1}-1\bigr)}}.
  \end{aligned}
  \]  
 \vspace{0.1pt}
 
  Moreover, all finitely generated graded $A$-modules have rational Poincar\'e series.
 \item $\rate(A)=\ell$.  In particular, $A$ has minimal rate.
 \item $\Ext_A(\kk,\kk)$ is generated as a $\kk$-algebra by its elements of degrees $1$ and $2$.
  \item The defining ideal $G$ of $A$ is generated in degrees $2$ and $\ell+1$.
\end{enumerate}
\end{enumerate}
\end{introtheorem*}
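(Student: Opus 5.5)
The plan is to reduce everything to the quadratic complete intersection $C=Q/I$, where $I=(f_1,\dots,f_n)$, together with the two exact sequences relating $C$, $A$ and $R$. Since $f_1,\dots,f_n$ are general, $C$ is a Koszul complete intersection with $\Hilb_C(z)=(1+z)^n$ and socle degree $n$. Linkage gives the exact sequence
\[
0\to A(-2)\xra{\ f_{n+1}\ } C\to R\to 0 ,
\]
with $A=Q/G$, $G=I\colon f_{n+1}$, and $A$ Gorenstein of socle degree $n-2$. I would first invoke the Strong Lefschetz Property of $C$, via Stanley/Watanabe/Cook and using the characteristic hypothesis. Degenerating $f_{n+1}$ to $L^2$, this shows that multiplication by a general quadric $C_i\to C_{i+2}$ has maximal rank, i.e.\ it is injective for $i\le \ell$ and surjective for $i\ge \ell+1$, since $\binom ni\le\binom n{i+2}$ exactly when $i\le (n-2)/2$. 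Because $A_i\cong \operatorname{im}(C_i\to C_{i+2})$, this yields three things at once:
\begin{itemize}
\item $G_i=I_i$ for $i\le \ell$, and $A$ agrees with $C$ in degrees $\le \ell$;
\item $R_j=0$ for $j\ge n-\ell$, and the Hilbert series of $R$ and $A$ are explicit;
\item the extra generators of $G$ first appear in degree $\ell+1$.
\end{itemize}

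The next step is part (B)(4), that $G$ is generated in degrees $2$ and $\ell+1$. For this I would use Gorenstein duality on $A$ together with the surjectivity half of the Lefschetz statement. The cokernel of $C_{i}\to C_{i+2}$ vanishes for $i\ge\ell+1$, and I expect this to force $G_{\ge \ell+2}$ to be generated by $G_{\ell+1}$ and $I$. If that direct argument is awkward, I would instead compare the minimal resolutions of $A$ and $R$ through the mapping cone of the sequence above over $Q$, using the Koszul resolution of $C$.

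For the Poincaré series of $A$, I would view $A$ as $C/J$, where $J=G/I$ is concentrated in degrees $\ge \ell+1$. The aim is to show that $C\to A$ behaves like a "truncation" of a Koszul algebra, namely that $J$ is generated in a single degree $\ell+1$ and has a linear resolution over $C$. Two routes are available for this:
\begin{enumerate}
\item Use the Gorenstein property: by Avramov--Levin, $A\to A/\Soc A$ is Golod, and $A/\Soc A$ is a truncation-like quotient of $C$ whose Poincaré series can be computed.
\item Apply directly a result for quotients of Koszul algebras by ideals generated in one high degree with linear resolution.
\end{enumerate}
Either route should give
\[
\Po^A_\kk(t,u)=\frac{(-t)^{\ell-1}}{\Hilb_A(-tu)+(1-tu)^n\bigl((-t)^{\ell-1}-1\bigr)} .
\]
Rationality for all modules would follow from the same Golod-type change of rings from the complete intersection $C$, over which all modules have rational Poincaré series by Gulliksen, together with Levin's results on large homomorphisms. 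The Betti table of $\kk$ over $A$ then has nonzero entries only on the linear strand and on the strand shifted by $\ell-1$. This gives $\rate(A)=\ell$, i.e.\ minimal rate. It also shows that $\Ext_A(\kk,\kk)$ is generated by the degree-$1$ part coming from $C$ and the degree-$2$ classes dual to the generators of $J$.

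For $R$, I would compute $\Po^R_\kk$ from the linkage sequence, regarding $C\to R=C/(f_{n+1})$ as a map whose kernel $f_{n+1}C\cong A(-2)$ is controlled by $A$. The factor $(1-t^2u^2)$ in the formula signals a hypersurface-like contribution of $f_{n+1}$ on top of the $A$-data. I would try to prove that $C\to R$ is a Golod homomorphism relative to the $A$-structure, or equivalently use the fibre-product/Levin formula for $R$ as $C$ modulo an ideal isomorphic to $A(-2)$, and plug in $\Po^A_\kk$. The remaining parts of (A) follow from the same data:
\begin{itemize}
\item $\rate(R)$ is read off from the resulting series;
\item levelness, with socle in degree $n-\ell-1$, comes from $R_{\ge n-\ell}=0$ plus injectivity in low degrees;
\item the type is the last graded Betti number of $R$ over $Q$, computed as $\dim_\kk R_{n-\ell-1}$, which is $\binom{n}{n-\ell-1}-\binom{n}{n-\ell-3}$, a Catalan number after simplification.
\end{itemize}

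The main obstacle I anticipate is the Golod/linearity step: showing that $J=G/I$ has a linear resolution over $C$, or equivalently that the relevant Massey products vanish. Only this makes the Poincaré series formulas exact rather than mere bounds. I would attack it first through the Gorenstein duality $A\cong \Hom(A,\kk)(n-2)$ combined with the maximal-rank statements.
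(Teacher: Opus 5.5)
Your outline has the right architecture: Golod homomorphisms out of the quadratic complete intersection $C$, Levin's formula, and rationality via Golodness. However, the one input everything rests on is left as an expectation. That input is that $\ann_C(f_{n+1})$ has an $(\ell+1)$-linear resolution over $C$, i.e.\ $\reg_C(A)=\ell$, equivalently $\reg_C(R)=\ell+1$.

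For $n$ even you could have supplied this directly. The top socle degree of $R$ is $n-\ell-1=\ell+1$, so $\reg_Q(R)=\ell+1$, and factoring out $f_1,\dots,f_n$ one at a time cannot raise regularity.

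For $n$ odd, however, $n-\ell-1=\ell+2$, and the maximal-rank information only gives $\ell\le\reg_C(A)\le\ell+1$. The paper states that it does not know whether maximal rank of $f_{n+1}$ forces $\reg_C(A)=\ell$ in this case, so ``Gorenstein duality plus maximal rank'' is not an argument. The missing idea is a degeneration plus a finiteness argument:
\begin{enumerate}
\item For $\wtR=Q/(x_1^2,\dots,x_n^2,(x_1+\dots+x_n)^2)$ one has $\reg_{\wtP_{\,[1]}}(\wtR)=\ell+1$. This holds because its Betti numbers over $Q/(x_1^2)$ coincide with those of the analogous ring $\ov R$ in $n-1$ (an even number of) variables.
\item The infinitely many vanishings $\Tor_i^{\Pu{1}}(R,\kk)_{i+\ell+2}=0$ over the hypersurface $\Pu{1}=Q/(f_1)$ reduce to finitely many non-vanishing conditions on maximal minors. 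This uses an explicit, eventually $2$-periodic resolution of $\kk$ over $\Pu{1}$.
\item These conditions hold at $\wtR$, hence on a nonempty open set.
\end{enumerate}
Without this, for $n$ odd you establish neither Golod property, neither Poincar\'e series, neither rate, nor the $\Ext$ statement.

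Also, the standard ``linear resolution implies Golod'' argument does not apply directly to $C\to R$. Indeed $\Tor^C(R,\kk)$ has the class of $f_{n+1}$ in bidegree $(1,2)$, off the strand $\ell+1$. The paper needs a Massey-operation criterion exploiting that this class has odd homological degree (so its square vanishes) while all other classes lie on the strand $\ell+1$. That again requires $\reg_C(R)=\ell+1$. The ``fibre-product/Levin formula'' you invoke is not available until this Golod property is proved.

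Parts (A)(3) and (B)(4) are in fact the same statement, and your arguments for them do not work. The canonical module of $R$ is $\operatorname{Hom}_C(R,C(n))\cong\ann_C(f_{n+1})(n)$. So socle elements of $R$ in degree $d$ correspond to minimal generators of $\ann_C(f_{n+1})$ in degree $n-d$. Hence $R$ is level with socle in degree $n-\ell-1$ if and only if $G$ is generated in degrees $2$ and $\ell+1$.

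Knowing $R_{\ge n-\ell}=0$ and injectivity of $f_{n+1}$ in low degrees only locates the top socle degree. It says nothing about socle in degree $n-\ell-2$, which is dual to $\beta^C_{1,\ell+2}(A)$; that is exactly the undecided entry when $n$ is odd. Likewise, surjectivity of $C_i\to C_{i+2}$ for $i\ge\ell+1$ does not yield $C_1\cdot\ann_C(f_{n+1})_{\ell+1}=\ann_C(f_{n+1})_{\ell+2}$.

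For $n$ odd the paper gets these from two ingredients:
\begin{enumerate}
\item The bounds $\beta^Q_{i,j}(R)\le\beta^Q_{i,j}(\wtR)$ and $\beta^Q_{i,j}(A)\le\beta^Q_{i,j}(\wtA)$. These come from the change-of-rings long exact sequence for $Q\to\Pu{1}$ and the equality of Betti numbers over $\Pu{1}$, itself a consequence of the regularity theorem.
\item The vanishings $\beta^Q_{n,n+\ell+1}(\wtR)=0$ and $\beta^Q_{1,\ell+2}(\wtA)=0$.
\end{enumerate}

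Two further claims are wrong as stated.
\begin{enumerate}
\item The Betti table of $\kk$ over $A$ is not confined to two strands. Expanding $\Po^A_\kk(t,u)=\bigl((1-tu)^n(1-tu^\ell\sum_{i\ge1}a_i(tu)^i)\bigr)^{-1}$ with $a_1=\beta^C_{1,\ell+1}(A)>0$ produces nonzero terms $t^{2m}u^{m(\ell+1)}$ on every strand $m(\ell-1)$. So $\rate(A)=\ell$ requires a degree estimate showing that homological degree $2$ dominates.
\item Generation of $\Ext_A(\kk,\kk)$ in degrees $1$ and $2$ cannot be read off from a Betti table. The paper proves it by comparing $A$ with $A/\m^{\ell+1}\cong C/\n^{\ell+1}$ via Levin's theorem on $\Ext$ along $S\to S/\m^i$. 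It uses that, over the Koszul ring $C$, $\reg_C(A)=\ell$ is equivalent to the vanishing of the maps $\nu_C(\m^i)$ for $i\ge\ell$.
\end{enumerate}
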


Properties (A.1) and (B.1) are proved in \Cref{t:generic} and \Cref{c:rationalPoincare}. Properties (A.2) and (B.2) are proved in \Cref{general-rate}. Property (B.3) is proved in \Cref{yoneda}.  Finally, properties (A.3) and (B.4) are known when $n$ is even; see \cref{rem: general - hilb}. When $n$ is odd, we prove them  in \Cref{c:RoverQ-known} and \Cref{c:AoverQ-known}, respectively; see also \cref{r:n=3} for the case $n=3$.

The characteristic assumption in Main Theorem ensures that the ring $R$ has the Strong Lefschetz Property, and thus the Hilbert series of both $R$ and $A$ are known. A key ingredient in our proofs arises from working over the complete intersection ring $P$ defined by $f_1, \dots, f_n$. In particular, we prove $\reg_P(A)=\ell$ (see \cref{generic reg}) and this allows us to prove that the induced maps $P\to R$ and $P\to A$ are both Golod homomorphisms. The details of proving that these maps are Golod are relegated to the Appendix, where we state a general criterion with wider applicability, see \cref{l:Massey-lemma} and \cref{p:Golod-I}.  Another essential ingredient comes from  \cite{proceedings}, where we study a special a.c.i.~ring, denoted here by $\widetilde R$ (see \cref{not:3}), which is used to show that the open sets involved in the definition of the concept of general are nonempty. 

Under the hypotheses of the Main Theorem, the Betti numbers of $R$ over $Q$ are known when $n$ is even, and conjectured when $n$ is odd; see \cite{MM}. Thus, it is somewhat surprising that we are able to give formulas for the Poincar\'e series of $\kk$ over $R$ independent of the parity of $n$. We make some contributions towards understanding the Betti numbers of $R$ over $Q$ when $n$ is odd as well; for example, the fact that $R$ is level fully describes the Betti numbers in the last homological degree, showing that they are equal to their conjectured values. We also obtain in \cref{p:overQ} bounds for all the Betti numbers of $R$ in terms of the Betti numbers of $\widetilde R$, which have been computed in \cite{proceedings}.

\section{Preliminaries}

In this section we record notation, terminology,  and some results from \cite{proceedings} that will be used in later sections. 

Let $S$ be a standard graded Noetherian $\kk$-algebra with $\kk$ a field and $M$ a finitely generated graded $S$-module with $n$th graded component $M_n$.  We denote by $\fh_M\colon\BZ \to \BZ_{\geq 0}$ the Hilbert function of $M$; that is,
\begin{align*}
\fh_M(n)&=\dim_{\kk}M_n\quad \text {for all $n\in \mathbb Z$}. 
\end{align*}
We also denote by $\Hilb_M(t)$ the Hilbert series of $M$; that is,
\begin{align*}
\Hilb_M(t)=\sum_{n= m}^{\infty}\fh_M(n)t^n, 
\end{align*}
where $m$ is the smallest integer with $\fh_M(m)\ne 0$.

For $i,j\in \mathbb Z$ we denote by $\beta_{i,j}^S(M)$ the $i$th graded Betti number of $M$ over $S$ in internal degree $j$, namely 
\[
\beta_{i,j}^S(M)=\rank_\kk(\Tor_i^S(M,\kk)_j)\,.
\]
We denote by $\Po_M^S(t,u)$ the (bigraded) Poincar\'e series of $M$ over $S$; that is, 
\begin{align*}
\Po_M^S(t,u)=\sum_{i,j\in\mathbb{Z}}\beta_{i,j}^S(M)t^iu^j.
\end{align*}
The classical Poincar\'e series of $M$ over $S$ is defined as $\Po_M^S(t)=\Po_M^S(t,1)$. 
The regularity of $M$ over $S$ is defined by 
\[
\reg_S(M)=\max\{j-i \,\vert\, \beta_{i,j}^S(M)\ne 0\}\,.
\]
The $S$-module $M$ is called  $a$-{\it linear} if $\beta_{i,j}^S(M)=0$ for all $i,j\geq 0$ with $j-i\not = a$.  A graded quotient ring $S/\aa$ is called $a$-{\it linear} if the defining ideal $\aa$ is $a$-linear as an $S$-module. The algebra $S$ is said to be {\it Koszul} if $\kk$ is $0$-linear. 

We recall a standard computation of Betti numbers in terms of Hilbert functions; see \cite[Lemma 2.1]{proceedings}.  For all integers $a,k$ with $k\ge 0$, we have: 
\begin{align}
\begin{split}
\label{Betti-recurrence}
\beta_{k,k+a}^S(M)=(-1)^k\fh_M(k+a)
-&\sum_{i=0}^{k-1}(-1)^{i+k}\fh_S(k-i)\beta_{i,i+a}^S(M)\\
-&\sum_{i=0}^{k+a} (-1)^{i+k}\sum_{\substack{j=i\\j\ne i+a}}^{k+a}
\fh_{S}(k+a-j)\beta_{i,j}^S(M)\,.
\end{split}
\end{align}

The following are some immediate consequences of \eqref{Betti-recurrence}.

\begin{chunk}  

\label{a-linear}
If $M$ is $a$-linear for some integer $a$, then 
\[
\Po_M^S(t,u)=\frac{\Hilb_M(-tu)}{(-t)^{a}\Hilb_S(-tu)}\,.
\]  
\end{chunk}

\begin{chunk}
\label{Betti-strand2}
Let $T$ be another standard graded Noetherian $\kk$-algebra, and $N$ a finitely generated graded $T$-module such that  $\fh_S=\fh_T$ and $\fh_M=\fh_N$. If there exists an integer $a\geq 0$ such that $\beta_{i,j}^S(M)=\beta_{i,j}^T(N)$ for all $i,j\ge 0$ with $j-i\ne a$, then $\beta_{i,j}^S(M)=\beta_{i,j}^T(N)$ for all $i,j\geq 0$. 
\end{chunk}

We now recall the notions of Golod homomorphisms, absolutely Koszul algebras, and Lefschetz properties.

\begin{chunk}\label{def:golod}
Let $\phi\colon S\to T$ be a surjective homomorphism of standard graded Noetherian $\kk$-algebras.  There is a coefficient-wise inequality on Poincar\'e series: 
\begin{align*}
    \Po_{\kk}^T(t,u)\preceq\frac{\Po_{\kk}^S(t,u)}{1-t(\Po_T^S(t,u)-1)}. 
\end{align*}
The map $\phi\colon S\to T$ is called a \textit{Golod homomorphism} if equality holds; see for example \cite[Proposition 3.3.2]{Avr}.  Additionally, see \cref{Golod-hom} in the Appendix for an equivalent definition. 
\end{chunk}

\begin{chunk}\label{def:abs_koszul}
Let $\m$ denote the maximal homogeneous ideal of $S$. If $F$ is a minimal graded free resolution of $M$ over $S$, then $\lin^S(F)$ denotes the associated graded complex of $F$ arising from the $\m$-adic filtration; see \cite{Herzog-Iyengar} for more details. The module $M$ is said to have {\it finite linearity defect} if $\HH_n(\lin^S(F))=0$ for $n\gg0$. The algebra $S$ is said to be {\it absolutely Koszul}  if every finitely generated graded $S$-module $M$ has finite linearity defect. It is known that any absolutely Koszul algebra $S$ is a Koszul algebra. 
Thus, if $S$ is absolutely Koszul then $\Po_\kk^S(t,u)={\Hilb_S(-tu)}^{-1}$ and any finitely generated graded $S$-module $M$ has a rational Poincar\'e series, with denominator  $\Hilb_S(-tu)$. 
\end{chunk}

\begin{chunk}{\cite[Proposition 5.8, Theorem 5.9]{Herzog-Iyengar}}
\label{prop:abs-koszul}
If  $\varphi\colon P\to S$ is a surjective homomorphism of standard graded $\kk$-algebras, $P$ is a quadratic complete intersection (and thus Koszul) and $\Ker(\varphi)$ has a $2$-linear resolution over $P$, then $S$ is absolutely Koszul. 
\end{chunk}

\begin{chunk} Let $f$ be a homogeneous element of $S$ of degree $d\geq 1$. We say that $f$ is a {\it maximal rank element} of $S$ if for each $i\ge 0$ the map  $S_{i}\xrightarrow{}S_{i+d}$  given by multiplication by $f$ has maximal  rank.  If $d=1$ (that is, if $f$ is a linear form) and $f$ is a maximal rank element, then we say that $f$ is a {\it Weak Lefschetz element}.  If there exists a linear element $f$ in $S$  such that for each $j\geq 1$  the power  $f^j$ is a maximal rank element of $S$, then we say that $f$ is a {\it Strong Lefschetz element} of $S$. 

We say that the ring $S$ has the {\it Weak} ({\it respectively Strong}) {\it Lefschetz Property}, or WLP (respectively SLP), if there exists a Weak (respectively Strong) Lefschetz element of $S$. Notice that if $S$ has the SLP with Strong Lefschetz element $f$, then it has the WLP with Weak Lefschetz element $f$, and $f^j$ is a maximal rank element for each $j\ge 1$.

Under appropriate characteristic assumptions, it is known that a complete intersection defined by general quadrics has the SLP, and hence they have maximal rank elements in any degree; more details are recorded in \cref{general}. 
\end{chunk}
\smallskip

We consider some specific standard graded rings and adopt the following notation and hypotheses for the remainder of the section. 

\begin{setting}
\label{not:1}
Let $\kk$ be a field, $Q=\kk[x_1, \dots, x_n]$ with $n\geq 2$, and $\{f_1,\dots, f_{n+1}\}$ a set of homogeneous elements in $Q$ of degree $2$. Define the following ideals of $Q$:
\begin{equation*}
J \coloneq (f_1, \dots, f_n),\quad I\coloneq (f_1, \dots, f_{n+1}),\quad \text{and}\quad G \coloneq  J:I. 
\end{equation*}
We further assume that $f_1,\ldots, f_n$ is a regular sequence and $f_{n+1}\notin J$, so that $J$ is a complete intersection ideal, $I$ is an almost complete intersection ideal, and $G$ is a Gorenstein ideal by \cite[Remark 2.7]{HU87}. We also define the following quotient rings of $Q$:
\begin{equation*}
P \coloneq Q/J,\quad R\coloneq  Q/I,\quad\text{and}\quad A\coloneq Q/G. 
\end{equation*}
Notice that $\ann_P(f_{n+1})=G/J$ and $A\cong P/\ann_P(f_{n+1})$. In this setting, $P$ is a complete intersection ring, $R$ is an almost complete intersection ring, and   $A$ is a Gorenstein ring.
We also set 
\[
\ell\coloneq \left\lfloor\frac{n-2}{2}\right\rfloor.  
\]
Observe that $n=2\ell+2$ when $n$ is even and $n=2\ell+3$ when $n$ is odd. Finally, for all integers $0\leq u\leq n$ we set
\[
\Pu{u} \coloneq Q/(f_1,\dots,f_{u}).
\]
with the convention that $\Pu{0}=Q$.  Notice also that $\Pu{n}=P$.
\end{setting}

\begin{remark}
\label{APR} 
There exists a short exact sequence of graded $P$-modules:
\begin{equation}
\label{ses APR}
0\to A(-2)\to P\to R\to 0,
\end{equation}
see for example \cite{Eis}, where the map $P\to R$ is the canonical projection and the map $A(-2)\to P$ is multiplication by $f_{n+1}$.  As a direct consequence of this exact sequence we have the following equalities: 
\begin{align}
     \label{e:reg}
     \reg_P(R)&=\reg_P(A)+1,\\
     \label{hilb APR}
     \Hilb_R(t)&=\Hilb_P(t)-t^2\Hilb_A(t), 
\end{align}
where the first equality follows from the isomorphism: 
\begin{equation*}
\Tor_i^P(R,\kk)_j\cong \Tor_{i-1}^P(A,\kk)_{j-2}\quad \text{for all}\  i\geq 1\text{ and } j\geq 0.
\end{equation*}
\end{remark} 

Next, we record known computations (see \cite{MM}) of the Hilbert functions of the rings $A$ and $R$ and, when $n$ is even, of their Betti numbers.  For consistency of notation, we refer to the analogous statements given in \cite{proceedings}.

\begin{chunk}\cite[Proposition 3.5]{proceedings}
\label{Hilb R and A}
If $f_{n+1}$ is a quadratic maximal rank element of $P$, then the following assertions hold.
\begin{enumerate}[\quad$(1)$]
\item The Hilbert function of $R$ is given by  
\begin{equation}
\label{hilbR}
\fh_{R}(i)= \max {\left\{ \binom{n}{i}-\binom{n}{i-2},0\right\}} \quad\text{for all}\quad i\geq 0.
\end{equation}
In particular, the socle of  $R$  has  maximum degree  $n-\ell-1$.

\item The Hilbert function of $A$ is given by
\begin{equation}
\label{hilbertA}
\fh_{A}(i)=\min\left\{ \binom{n}{i},\binom{n}{i+2}\right\} \quad\text{for all}\quad i\geq 0.
\end{equation}
In particular, we have:
\begin{enumerate}[\quad$(a)$]
    \item The socle degree of  $A$ is $n-2$.
    \item The $P$-ideal $G/J$ is generated in degrees at least $\ell+1$. 
\end{enumerate}
\item The following equalities hold: 
\[
\dim_\kk(\Soc R)_{n-\ell-1}= {\binom{n}{\ell+1}-\binom{n}{\ell+3}} = C_{\ell+2}=\dim_\kk(G/J)_{\ell+1}.
\]
where $C_{\ell+2}$ is the $(\ell+2)$-th Catalan number. 
\end{enumerate}
\end{chunk}

\begin{chunk} 
\label{not: rho}  \cite[Notation 3.9, Proposition 3.10]{proceedings}  For $n\geq 1$, define a sequence $\{\rho_k(n)\}_{k\ge 0}$ by setting $\rho_0(n)=0$ and for $k\geq 1$ using  the following recursive relation: 
\begin{align*}
\rho_k(n)=&\sum_{i=0}^{k-1}(-1)^{i+k+1}{\binom{n+k-i-1}{n-1}}\rho_i(n)+\sum_{i=0}^{\ell} (-1)^{i+k+1}  \binom{n+k+\ell-2i}{n-1}\binom{n+1}{i}.
\end{align*}

For example, when $n=6$, the values of $\rho_k(n)$ for $0\le k\le 6$ are:
\begin{equation}\label{e:small rho}
    0,0, 14,105,132,70,14.
\end{equation}

If $n$ is even and $f_{n+1}$ is a quadratic maximal rank element of $P$, then
\begin{align}\label{Betti RQ} 
\beta_{i,j}^Q(R)= \begin{cases} 
    \binom{n+1}{i},&\text{ if } j=2i\,  \text{ and }\,  0\leq i \leq\ell\\
       \rho_i(n), &\text{ if } j= i+\ell+1 \text{ and } i\ge 0\\
       0,&\text{ otherwise}\,.
    \end{cases}
\end{align}
where $\rho_1(n)=0$ when  $n\ge 4$. 
\end{chunk}

\begin{chunk}\label{not: gamma} \cite[Notation 3.12, Proposition 3.13]{proceedings}
For $n\geq 1$, define a sequence $\{\gamma_k(n)\}_{k\geq 0}$ by setting 
\[
\gamma_0(n)=\begin{cases}
0, &\text{if $n>2$}\\
1, &\text{if $n=2$}\,, 
\end{cases} 
\quad 
\gamma_k(n) = \gamma_{n-k}(n) \text{ for } 1\le k\le n,
\] 

and for $1\le k \le \ell+1$ using the following recursive relation:
\begin{align*}
\gamma_k(n) =& 
(-1)^k{ \binom{n}{k+\ell+2}} - \sum_{i=0}^{k-1}(-1)^{i+k}{ \binom{n-1+k-i}{n-1}}\gamma_i(n)\\
&- \sum_{i=0}^{\ell-1}(-1)^{i+k}{ \binom{n-1+\ell+k-2i}{n-1}\binom{n}{i}}.
\end{align*}

For example, for $n=6$, the values of $\gamma_k(n)$ for $0\leq k\leq 4=\ell+1$ are $0, 14, 85, 132$. If $n$ is even and $f_{n+1}$ is a quadratic maximal rank element of $P$, then
\begin{align}\label{Betti AQ}
\beta_{i,j}^Q(A)=\begin{cases} 
\binom{n}{i},&\text{ if } j = 2i   \text{ and } 0\leq  i \leq \ell-1 \text{ or  if }\ j = 2i-2   \text{ and } \ell+3\leq i \leq n\\
\gamma_i(n),&\text{ if}\ j=i+\ell \text{ and }\,  0\leq i\leq n\\
0,&\text{ otherwise}\,.
\end{cases}
\end{align}

where  $\gamma_1(n)=\gamma_{n-1}(n)=
C_{\ell+2}$. 
\end{chunk}

\section{Regularity over the complete intersection ring $P$}

In this section we work with \cref{not:1} and we state some results about the regularity of the almost complete intersection ring $R$ and the Gorenstein ring $A$ over the complete intersection ring $P$. In particular, we show that the resolutions of $R$ and $A$ over $P$ have the shapes below, and the last strand in both diagrams vanishes when $n$ is even. 
\[
\begin{array}[t]{r|cccccc}
\beta_{i,j}^P(R)&0&1&2&3&4&\cdots\\
\hline
0&1&-&-&-&-&\cdots\\
1&-&\text{1}&-&-&-&\cdots\\
2&-&-&-&-&-&\cdots\\
\text{\vdots}&\text{\vdots}&\text{\vdots}&\text{\vdots}&\text{\vdots}&\text{\vdots}&\text{\vdots}\\
\ell+1&-&-&*&*&*&\cdots\\ 
\ell+2&-&-&*&*&*&\cdots 
\end{array}
\hspace{2cm}
\begin{array}[t]{r|cccccc}   \beta_{i,j}^P(A)&0&1&2&3&4&\cdots \\
\hline
0&1&-&-&-&-&\cdots\\
1&-&-&-&-&-&\cdots\\
\text{\vdots}&\text{\vdots}&\text{\vdots}&\text{\vdots}&\text{\vdots}&\text{\vdots}&\text{\vdots}\\
\ell&-&*&*&*&*&\cdots\\ 
\ell+1&-&*&*&*&*&\cdots 
\end{array}
\]

Towards this goal, we begin by stating two useful ingredients and establishing some consequences, including a computation of the regularity of $R$ over the intermediate complete intersections $\Pu{u}$ for $0\leq u\leq n$.

\begin{chunk} \cite[Proposition 3.8~(1,2)]{proceedings}
\label{l: A and R over Pu}  If $f_{n+1}$ is a quadratic maximal rank element of $P$, then for each $0\leq u\leq n$  the following equalities hold:

\begin{enumerate}[\quad $(1)$]
\item $\beta_{i,j}^{\Pu{u}}(A)= 
\begin{cases} 
\binom{n-u}{i},&\text{ if } j=2i\,  \text{ and }\,  0\leq j-i <\ell\\       
\quad 0,&\text{ if $j\ne 2i$ and $0\leq j-i<\ell$}\,;
\end{cases}$
\item 
$\beta_{i,j}^{\Pu{u}}(R)=
\begin{cases} 
\binom{n-u+1}{i},&\text{ if } j=2i\,  \text{ and }\,  0\leq j-i \le \ell\\       
\quad 0,&\text{ if $j\ne 2i$ and  $0\leq j-i\le \ell$}\,;
\end{cases}$ 
\end{enumerate}
\end{chunk}

\begin{chunk} \cite[Proposition 3(2)]{Koszul-survey}
    \label{Ks}
Let $S$ be a standard graded $\kk$-algebra, $T$ a quotient ring of $S$ by a homogeneous ideal, and $M$ a finitely generated graded $T$-module. If $\reg_S(T)\le 1$, then 
$\reg_T(M)\le \reg_S(M)$. 
\end{chunk}

\begin{proposition} 
\label{reg}
Adopt the notation and hypotheses in \cref{not:1}. Then 
\begin{enumerate}[\quad$(1)$]
\item $\reg_{P}(R)=\reg_{\Pu{n}}(R)\le  \reg_{\Pu{n-1}}(R)\le \cdots \le \reg_{\Pu{0}}(R)=\reg_Q(R)$. \\
\end{enumerate}

Moreover, if $f_{n+1}$ is a quadratic maximal rank element of $P$, then:
\begin{enumerate}[\quad$(1)$]
\setcounter{enumi}{1}
\item For all $0\le u\le n$ the following inequalities hold:
    \[\ell+1\le \reg_{\Pu{u}}(R)\le \ell+2,\]    with $\reg_{\Pu{u}}(R)=\ell+1$ when $n$ is even. 
    \item The following inequalities hold: 
    \[\ell\le \reg_P(A)\le \ell+1,\]
    with $\reg_P(A)=\ell$ when $n$ is even. 
\end{enumerate}
\end{proposition}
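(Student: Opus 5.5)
Part (1) follows from \cref{Ks} applied along the chain of quotients. For $1\le u\le n$ we have $\Pu{u}=\Pu{u-1}/(f_u)$. Since $f_1,\dots,f_n$ is a regular sequence, $f_u$ is a nonzerodivisor of degree $2$ on $\Pu{u-1}$. Hence $\Pu{u}$ has the resolution $0\to \Pu{u-1}(-2)\to \Pu{u-1}$ over $\Pu{u-1}$, and so $\reg_{\Pu{u-1}}(\Pu{u})=1$. Applying \cref{Ks} with $S=\Pu{u-1}$, $T=\Pu{u}$ and $M=R$ gives $\reg_{\Pu{u}}(R)\le\reg_{\Pu{u-1}}(R)$. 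Chaining these inequalities from $u=n$ down to $u=1$, and using $\Pu{n}=P$ and $\Pu{0}=Q$, yields (1).

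For (2), assume $f_{n+1}$ is a quadratic maximal rank element of $P$.

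\emph{Lower bound.} By \cref{Hilb R and A}(1) the socle of $R$ is nonzero in degree $n-\ell-1$. Over $Q$ this gives $\beta^Q_{n,2n-\ell-1}(R)\ne 0$, whence $\reg_Q(R)\ge n-\ell-1\ge \ell+1$. This alone bounds only $\reg_Q$, whereas we need the bound over every $\Pu{u}$, so I argue through $A$ instead. By \cref{Hilb R and A}(2b), $G/J$ has nonzero minimal generators in degree $\ell+1$; indeed $(G/J)_{\ell+1}\neq 0$ by \cref{Hilb R and A}(3). These give $\beta^P_{1,\ell+1}(A)\ne0$, so $\reg_P(A)\ge \ell$. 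Then \eqref{e:reg} gives $\reg_P(R)\ge\ell+1$, and by (1) $\reg_{\Pu{u}}(R)\ge \ell+1$ for all $u$.

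\emph{Upper bound.} By (1) it suffices to show $\reg_Q(R)\le \ell+2$. Since $R$ is a graded Artinian quotient of $Q$, its regularity over $Q$ equals the top socle degree, which is $n-\ell-1$ by \cref{Hilb R and A}(1). For $n=2\ell+2$ this is $\ell+1$, and for $n=2\ell+3$ it is $\ell+2$. This proves (2) in both parities, including the equality $\reg_{\Pu{u}}(R)=\ell+1$ when $n$ is even, since then the lower and upper bounds coincide. (Alternatively, for $n$ even the equality can be read off directly from \eqref{Betti RQ}.)

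Part (3) follows from (2) with $u=n$ together with \eqref{e:reg}: $\reg_P(A)=\reg_P(R)-1$ lies in $[\ell,\ell+1]$, and equals $\ell$ when $n$ is even.

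The main point requiring care is the lower bound over the intermediate rings $\Pu{u}$. The top-socle argument only bounds $\reg_Q$ from below, which is the wrong direction for the chain in (1). I therefore obtain the lower bound over $P$, via the degree-$(\ell+1)$ generators of $G/J$ and the isomorphism of Tor modules in \cref{APR}, and then transport it upward along the chain. Alternatively, \cref{l: A and R over Pu}(2) could be used: it shows that the Betti numbers of $R$ in strands $j-i\le\ell$ are confined to $j=2i$ with binomial values. Combined with the Hilbert function, this would force a nonzero entry in strand $\ell+1$.
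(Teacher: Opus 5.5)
Your proof is correct and follows the paper's argument. Part (1) comes from \cref{Ks} along the chain of hypersurface quotients. In (2), the upper bound is the top socle degree $n-\ell-1$ of $R$ over $Q$, and the lower bound over $P$ comes from the degree-$(\ell+1)$ generators of $G/J$, transported up the chain by (1); the paper phrases this as $G/J$ being the second syzygy of $R$ over $P$, which amounts to your route through $A$ and \eqref{e:reg}. Part (3) then follows from (2) with $u=n$ and \eqref{e:reg}, exactly as in the paper.
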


\begin{proof}
$(1)$: Observe that for $0\le u \le n-1$, we have $\reg_{\Pu{u}}(\Pu{u+1})= 1$, as $f_{u+1}$ is a quadratic regular element on $\Pu{u}$ and $\Pu{u+1}=\Pu{u}/(f_{u+1})$. Therefore by successively applying \cref{Ks} with $T=\Pu{u+1}$ and $S=\Pu{u}$ for each $0\le u \le n-1$, we get the desired inequalities.

Assume now that $f_{n+1}$ is a quadratic maximal rank element of $P$.

$(2)$:  For all $0\le u\le n$ we have 
\[
\ell+1\le \reg_P(R)\le \reg_{\Pu{u}}(R)\le \reg_Q(R)=\begin{cases}\ell+2&\text{ if $n$ is odd}\\
\ell+1 &\text{ if $n$ is even.}
\end{cases}
\]
The first inequality follows from \cref{Hilb R and A}(2b), since $G/J$ is the second syzygy in a minimal free resolution of $R$ over $P$. 
The second and the third follow from part (1). The last equality follows from the fact that the socle of $R$ has maximum degree $n-\ell-1$ by \cref{Hilb R and A}(1) and the observation that $n-\ell-1$ equals  $\ell+2$ if $n$ is odd and $\ell+1$ if  $n$ is even.

$(3)$: The statement follows from (2) with $u=n$, in view of \eqref{e:reg}. 
\end{proof}

Now we are ready to prove that the Betti tables of $R$ and $A$ over $P$ have the desired shape. 

\begin{proposition} 
\label{p:Betti-over-P}
Adopt the notation and hypotheses in \cref{not:1}. If $f_{n+1}$ is a quadratic maximal rank element of $P$, then the following assertions hold. 
\begin{enumerate}[\quad $(1)$]
\item The Betti numbers of $R$ over $P$ satisfy the equalities 
\begin{align*}
\beta_{1,2}^P(R)&=1\quad \text{and}\quad \beta_{1,j}^P(R)=0\quad\text{for all } j\ne 2, \text{ and } \\
\beta_{i,j}^P(R)&=0\quad\text{for all}\quad j-i\notin \{\ell+1,\ell+2\}\quad\text{and}\quad i>1.
\end{align*}  
\item The Betti numbers of $A$ over $P$ satisfy the equalities
\[
\beta_{i,j}^P(A)=0\quad\text{for all}\quad j-i\notin \{\ell,\ell+1\}\quad\text{and}\quad i>0.
\]  
\item  If $\reg_{\Pu{1}}(R)=\ell+1$ (e.g. when $n$ is even), then 
\begin{enumerate}[\quad\rm(a)]
\item $\reg_P(R)=\ell+1$ and $\beta_{i,j}^P(R)=0$ for all $j-i\ne \ell+1$ and $i>1$.  
\item $\reg_P(A)=\ell$ and $\beta_{i,j}^P(A)=0$ for all $j-i\ne \ell$ and $i>0$.  
\item $\reg_{\Pu{1}}(A)=\ell$ and $\beta_{i,j}^{\Pu{1}}(A)=\binom{n-1}{i}$ when $j=2i-2$ and $j-i>\ell$.
\item The $P$-module $L=\ann_P(f_{n+1})$ has an $(\ell+1)$-linear resolution over $P$.
\end{enumerate}
\end{enumerate}
\end{proposition}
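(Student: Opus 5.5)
The plan is to reduce everything to statements about $A$ over $P$, using the exact sequence \eqref{ses APR} together with the vanishing results in \cref{l: A and R over Pu} and the regularity bounds of \cref{reg}.

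\textbf{Part (1).} Since $R=P/(f_{n+1})$ and $f_{n+1}P\cong A(-2)$ by \eqref{ses APR}, the first syzygy of $R$ over $P$ is $A(-2)$. This gives $\beta_{1,j}^P(R)=\beta_{0,j-2}^P(A)$, which is $1$ for $j=2$ and $0$ otherwise. For $i>1$ we have $\beta_{i,j}^P(R)=\beta_{i-1,j-2}^P(A)$, and $(j-2)-(i-1)=j-i-1$. Hence the vanishing claim in (1) for $i>1$ is exactly the claim (2), shifted.

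\textbf{Part (2).} The upper bound $j-i\le \ell+1$ is $\reg_P(A)\le\ell+1$ from \cref{reg}(3). For the lower bound I apply \cref{l: A and R over Pu}(1) with $u=n$. For $0\le j-i<\ell$ it gives $\beta_{i,j}^P(A)=\binom{0}{i}$ when $j=2i$, and $0$ otherwise. Since $\binom{0}{i}=0$ for $i>0$, all these Betti numbers vanish. Negative values of $j-i$ cannot occur for a minimal resolution over the standard graded ring $P$, so (2) follows.

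\textbf{Part (3), items (a), (b), (d).} For (a), \cref{reg}(1) gives $\reg_P(R)\le\reg_{\Pu{1}}(R)=\ell+1$. Combined with $\reg_P(R)\ge \ell+1$ from \cref{reg}(2), this yields $\reg_P(R)=\ell+1$. Then (1) forces $\beta_{i,j}^P(R)=0$ for $j-i\ne\ell+1$ and $i>1$. Item (b) follows from (a) via \eqref{e:reg} and the same index shift as in (1). For (d), use $0\to L\to P\to A\to 0$, so that $\beta^P_{i,j}(L)=\beta^P_{i+1,j}(A)$. By (b), every nonzero Betti number of $L$ then satisfies $j-i=\ell+1$, so $L$ has an $(\ell+1)$-linear resolution over $P$.

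\textbf{Part (3)(c).} This is the step I expect to be delicate. Since $f_n$ is regular on $\Pu{1}$ (in the indexing where $P=\Pu{1}/(f_n)$), the ring $P$ has a length-one resolution $0\to\Pu{1}(-2)\to\Pu{1}$ over $\Pu{1}$. Consequently $\Tor^{\Pu{1}}_i(P,\kk)$ is nonzero only for $(i,j)=(0,0)$ and $(1,2)$. Applying $\Tor^{\Pu{1}}(-,\kk)$ to \eqref{ses APR} gives
\[
\Tor_i^{\Pu{1}}(R,\kk)_j\cong\Tor_{i-1}^{\Pu{1}}(A,\kk)_{j-2}
\]
for $i\ge 2$, except for a possible correction in homological degree $1$ of $A$ and internal degree $2$; that correction sits in a strand $j-i\le\ell$ and does not affect the regularity. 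The hypothesis $\reg_{\Pu{1}}(R)=\ell+1$ then gives $\reg_{\Pu{1}}(A)\le\ell$. The reverse inequality comes from minimal generators of $G/(f_1,\dots,f_{n-1})$ in degree $\ell+1$, which exist because $\dim_\kk(G/J)_{\ell+1}=C_{\ell+2}\neq 0$ by \cref{Hilb R and A}(3).

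The binomial values $\binom{n-1}{i}$ along the strand $j=2i-2$ should then come from comparing $\Tor^{\Pu{1}}(A,\kk)$ with $\Tor^{\Pu{1}}(R,\kk)$ and $\Tor^{\Pu{1}}(P,\kk)$ in the long exact sequence, and from \cref{l: A and R over Pu} with $u=1$. However, the index range stated in (c) must be checked carefully against the regularity bound just obtained: along $j=2i-2$ one has $j-i=i-2$, and the bound $\reg_{\Pu{1}}(A)\le\ell$ already forces vanishing whenever $i-2>\ell$. I would therefore first pin down the precise range in which the formula $\binom{n-1}{i}$ is meant, before attempting to verify it.
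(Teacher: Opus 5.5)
Your arguments for (1), (2), (3a), (3b) and (3d) are correct and essentially the paper's. Deriving (1) from (2) via $\beta^P_{i,j}(R)=\beta^P_{i-1,j-2}(A)$ is a harmless variant of the paper's appeal to the bounds on $\reg_P(R)$.

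The gap is in (3c), and it starts with a misreading of the notation. $\Pu{1}=Q/(f_1)$ is the hypersurface, not $Q/(f_1,\dots,f_{n-1})$. So $P=\Pu{1}/(f_2,\dots,f_n)$ is a complete intersection of $n-1$ quadrics over $\Pu{1}$, resolved by the Koszul complex on $f_2,\dots,f_n$. Hence $\beta^{\Pu{1}}_{i,j}(P)=\binom{n-1}{i}$ if $j=2i$ and $0$ otherwise, and this is exactly where the $\binom{n-1}{i}$ in the statement comes from. Your claim that $\Tor^{\Pu{1}}(P,\kk)$ lives only in bidegrees $(0,0)$ and $(1,2)$ is therefore false. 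With it fall the isomorphism $\Tor_i^{\Pu{1}}(R,\kk)_j\cong\Tor_{i-1}^{\Pu{1}}(A,\kk)_{j-2}$ and the bound $\reg_{\Pu{1}}(A)\le\ell$ you derive from it.

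The missing step is the paper's. In the exact sequence
\[
\Tor_{i+1}^{\Pu{1}}(R,\kk)_{j+2}\to \Tor_{i}^{\Pu{1}}(A,\kk)_{j}\to \Tor_{i}^{\Pu{1}}(P,\kk)_{j+2}\to \Tor_{i}^{\Pu{1}}(R,\kk)_{j+2}
\]
induced by \eqref{ses APR}, the outer terms lie in strands $j-i+1$ and $j-i+2$. Both exceed $\ell+1=\reg_{\Pu{1}}(R)$ as soon as $j-i>\ell$, so they vanish. Hence $\beta^{\Pu{1}}_{i,j}(A)=\beta^{\Pu{1}}_{i,j+2}(P)$ in that range, which is $\binom{n-1}{i}$ for $j=2i-2$ and $0$ otherwise. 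You never prove this formula.

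The tension you noticed is real, but you resolved it in the wrong direction. The Betti-number formula is the correct content of (3c): it is what the paper's proof establishes, and what is used later to compare $A$ with $\wtA$. It gives $\beta^{\Pu{1}}_{\ell+3,\,2\ell+4}(A)=\binom{n-1}{\ell+3}\neq 0$ whenever $\ell+3\le n-1$, i.e.\ for $n\ge 5$. In those cases $\reg_{\Pu{1}}(A)\ge\ell+1$; in fact it equals $n-3$. So the equality $\reg_{\Pu{1}}(A)=\ell$ printed in (3c) cannot hold for $n\ge 5$, and the paper's proof of (3c) does not address it. No correct argument can produce your bound $\reg_{\Pu{1}}(A)\le\ell$ in general. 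Drop that claim and prove only the Betti numbers in the range $j-i>\ell$, by the argument above.
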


\begin{proof}
(1): For $i=1$ this follows by the definition of $R$ and $P$ and for $i>1$ from \cref{reg}(2). 

(2): This follows directly from \cref{l: A and R over Pu}(1) and \cref{reg}(3). 

(3a): By \cref{reg}(1), we have $\reg_P(R)\le \reg_{\Pu{1}}(R)=\ell+1$, and hence $\reg_P(R)=\ell+1$ follows from \cref{reg}(2). The statement about Betti numbers follows now from part (1).  

(3b): The statement about regularity follows from \eqref{e:reg} in \cref{APR} and thus the statement about Betti numbers follows from (2).

(3c): The short exact sequence \eqref{ses APR} in
 \cref{APR} induces the long exact sequence:
\begin{equation*}
          \Tor_{i+1}^{\Pu{1}}(R,\kk)_{j+2}\to \Tor_{i}^{\Pu{1}}(A,\kk)_{j}\to 
          \Tor_{i}^{\Pu{1}}(P,\kk)_{j+2}\to 
          \Tor_{i}^{\Pu{1}}(R,\kk)_{j+2}.
\end{equation*}
If $j-i>\ell$, then 
\[
(j+2)-i>(j+2)-(i+1)=(j-i)+1> \ell+1\,.
\]
Using the hypothesis that $\reg_{\Pu{1}}(R)=\ell+1$, we have $\Tor_{i+1}^{\Pu{1}}(R,\kk)_{j+2}=0=\Tor_{i}^{\Pu{1}}(R,\kk)_{j+2}$. Hence we have  
\[
\beta_{i,j}^{\Pu{1}}(A)=\beta_{i,j+2}^{\Pu{1}}(P)=
\begin{cases}\binom{n-1}{i} &\text {if } j=2i-2 \text{ and } j-i>\ell\\
0 &\text{if } j\ne 2i-2 \text{ and } j-i>\ell,
\end{cases}
\]
where the second equality follows since $P$ is a complete intersection over $\Pu{1}$ that is  generated by  $n-1$ quadrics.

(3d): This follows from the observation that $L=G/J$ is the first syzygy in a minimal free resolution of $A\cong P/(G/J)$ over $P$.
\end{proof}

Next, we show in \cref{p:reg ex} that there exist situations in which $\reg_{\Pu{1}}(R)=\ell+1$ when $n$ is odd. To do so, we will use the rings studied in \cite{proceedings}. Since these rings will play an important role in this paper, we introduce notation below. 

\begin{notation}
\label{not:3}
Define $Q=\kk[x_1, \dots, x_n]$, where $\kk$ denotes a field of characteristic zero or positive characteristic greater than $n$,  and the ideals:
\begin{equation*}
\wtJ \coloneq (x^2_1, \dots, x^2_n),\quad \wtI\coloneq (x^2_1, \dots,x_n^2, (x_1+\dots+x_{n})^2),\quad \text{and}\quad \wtG \coloneq  \wtJ : \wtI.
\end{equation*}
We also consider the quotient rings of $Q$:
\begin{equation*}
\wtP \coloneq Q/\wtJ,\quad \wtR\coloneq  Q/\wtI,\quad \wtA\coloneq Q/\wtG, \quad \text{and}\quad 
\wtP_{\,[1]}=Q/(x_1^2).
\end{equation*}
Notice that $\wtA \cong \wtP/\ann_{\wtP}((x_1+\dots+x_n)^2)$.
We also consider the standard graded rings that correspond to the ones above, but with $n-1$ variables instead of $n$ variables:
\begin{align*}
 \ov Q&\coloneq \kk[x_1\dots,x_{n-1}]&
 \ov R&\coloneq \ov Q/(x_1^2,\dots,x_{n-1}^2,(x_1+\dots+x_{n-1})^2)\\
 \ov P&\coloneq \ov Q/(x_1^2,\dots,x_{n-1}^2)&
 \ov A&\coloneq \ov Q/\big((x_1^2, \dots, x_{n-1}^2) : (x_1+\dots +x_{n-1})^2\big).
 \end{align*}
\end{notation}

\begin{remark}
\label{ex: wlp}

It is known that $x_1+\dots+x_n$ is a Strong Lefschetz element of $\wtP$, and hence $(x_1+\dots+x_n)^2$ is a maximal rank element of $\wtP$, when $\ch\kk=0$ by \cite[Proposition 2.2]{MMN11} or when $\ch \kk> n$ by \cite[Theorem 3.6(ii)]{cook}. In particular, the rings $\wtP$, $\wtR$, and $\wtA$ satisfy \cref{not:1}. Therefore, any result requiring only \cref{not:1} and $f_{n+1}$ is a maximal rank element can be applied for the rings  $\wtP$, $\wtR$, and $\wtA$.
\end{remark}

The main ingredient of \cref{p:reg ex} is a result from \cite{proceedings} which allows us to extend results known in the case when $n$ is even to the case when $n$ is odd. 

\begin{chunk}
\label{Betti reduction}
Set $T=Q/(x_n^2)$. Then, by symmetry, we have 
\begin{equation}
\label{e:symm}
\beta_{i,j}^{T}(\wtR)=\beta_{i,j}^{\wtP_{\,[1]}}(\wtR) \quad\text{and}\quad \beta_{i,j}^{T}(\wtA)=\beta_{i,j}^{\wtP_{\,[1]}}(\wtA)\quad\text{for all $i,j\ge 0$.}
\end{equation}
Therefore, with the  hypotheses in \cref{not:3}, we can restate \cite[Proposition 4.8]{proceedings} as follows.
If  $n\ge 3$ is odd, then the following equalities hold for all $i,j\ge 0$:
\begin{enumerate}[\quad $(1)$]
\item  $\beta_{i,j}^{\wtP_{\,[1]}}(\wtR)=\beta_{i,j}^{\ov Q}(\ov R)$;
\item  $\beta_{i,j}^{\wtP_{\,[1]}}(\wtA)=\beta_{i,j}^{\ov Q}(\ov A).$
\end{enumerate}
\end{chunk}

\begin{proposition}\label{p:reg ex}
Adopt the notation and hypotheses in \cref{not:3}. Then 
\[
\reg_{\wtP_{\,[1]}}(\wtR)=\ell+1. 
\]
In particular, the conclusions of \cref{p:Betti-over-P}(3) hold for the rings $\wtP$, $\wtR$, and $\wtA$.
\end{proposition}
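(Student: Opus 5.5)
We treat the two parities of $n$ separately. By \cref{ex: wlp}, $\wtP$, $\wtR$, $\wtA$ satisfy \cref{not:1}, and $f_{n+1}=(x_1+\dots+x_n)^2$ is a quadratic maximal rank element of $\wtP$. So \cref{reg}(2) applies with $u=1$ and gives $\ell+1\le \reg_{\wtP_{\,[1]}}(\wtR)\le \ell+2$.

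\emph{The case $n$ even.} Here \cref{reg}(2) already says that $\reg_{\Pu{u}}(R)=\ell+1$ for every $u$. Taking $u=1$ gives the claim directly.

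\emph{The case $n$ odd.} Here $n\ge 3$, so \cref{Betti reduction}(1) applies and gives $\beta_{i,j}^{\wtP_{\,[1]}}(\wtR)=\beta_{i,j}^{\ov Q}(\ov R)$ for all $i,j$. Hence $\reg_{\wtP_{\,[1]}}(\wtR)=\reg_{\ov Q}(\ov R)$.

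Now $\ov R$ is the ring $\wtR$ built in $n-1$ variables, and $n-1$ is even. Set $\ov\ell=\lfloor (n-3)/2\rfloor$. Since $n=2\ell+3$, we have $n-1=2\ell+2$ and therefore $\ov\ell=\ell$.

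The characteristic hypothesis still holds in $n-1$ variables, since $\ch\kk=0$ or $\ch\kk>n>n-1$. So, by \cref{ex: wlp} applied in $n-1$ variables, $(x_1+\dots+x_{n-1})^2$ is a maximal rank element of $\ov P$. Applying the even case of \cref{reg}(2) with $u=0$, or equivalently reading off \eqref{Betti RQ}, gives $\reg_{\ov Q}(\ov R)=\ov\ell+1=\ell+1$. Here we need $n-1\ge 2$, which holds because $n\ge 3$.

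Combining the two cases, $\reg_{\wtP_{\,[1]}}(\wtR)=\ell+1$ for all $n$. The final assertion follows because the hypothesis of \cref{p:Betti-over-P}(3) is exactly $\reg_{\Pu{1}}(R)=\ell+1$, and the maximal rank hypothesis holds for $\wtP$, $\wtR$, $\wtA$ by \cref{ex: wlp}.

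The main point needing care is the index bookkeeping $\ov\ell=\ell$, together with checking that the odd-case reduction \cref{Betti reduction} matches the rings exactly. This uses the symmetry $T\leftrightarrow\wtP_{\,[1]}$ already recorded in \eqref{e:symm}.
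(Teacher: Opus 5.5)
Your proposal is correct and follows the same route as the paper. The even case is read off from \cref{reg}(2). The odd case uses \cref{Betti reduction} to identify $\reg_{\wtP_{\,[1]}}(\wtR)$ with $\reg_{\ov Q}(\ov R)$, then applies the even case in $n-1$ variables, using $\lfloor\frac{(n-1)-2}{2}\rfloor=\ell$. Your additional checks, namely the characteristic condition and $n-1\ge 2$, are precisely the hypotheses the paper uses implicitly when invoking \cref{ex: wlp} and \cref{reg}(2) for $\ov R$.
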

\begin{proof}
When $n$ is even, the statements follow from \cref{reg}(2), in view of \cref{ex: wlp}. Assume now $n$ is odd. 
In this case, $\ell$ is also equal to $\left\lfloor \frac{(n-1)-2}{2}\right\rfloor$. Applying \cref{reg}(2) to the ring $\overline R$, we see that $\reg_{\,\ov Q}(\ov R)=\ell+1$, since $n-1$ is even. By \cref{Betti reduction} we have then
\begin{align*}
&\reg_{\wtP_{\,[1]}}(\wtR)=\reg_{\,\ov Q}(\ov R)=\ell+1\,.\qedhere
\end{align*}
\end{proof}

\begin{comment}
Adopt \cref{not:1} and assume that $n$ is even. Then $n=2\ell+2$ and $s= n-2$.
For $1\leq j\leq n-2$ we set
\begin{equation*}
    \alpha_j\coloneq \sum_{i=0}^{\ell-1}(-1)^{i+j-1}\textstyle \binom{n}{i}\binom{\ell +n-2+j-2i}{n-2}-\sum_{r=1}^{j-1}(-1)^{r+j}\binom{n-2+j-r}{j-r}\alpha_r,
\end{equation*}
and 
\begin{equation*}
\alpha_{n-1}\coloneq (-1)^n+\sum_{i=2}^{n-\ell} (-1)^{i}\alpha_{n-i}+\sum_{i=n-\ell+1}^{n-1}
(-1)^i\Big(\alpha_{n-i}+\textstyle\binom{n}{i}\Big).
\end{equation*}
\end{comment}

\section{Regularity in the general case}
 \label{sec:generic}

 In this section we develop a major ingredient for our investigation, namely \cref{generic reg},  which extends \cref{p:reg ex} to a general setting. We begin by establishing a precise definition of the term ``general''. 

Let $\kk$ be an infinite field of characteristic zero or positive characteristic greater than $n$ where $n\ge 2$ is a fixed integer and $Q=\kk[x_1,\dots,x_n]$ a standard graded polynomial ring. 

\begin{chunk} {\bf Parametrization.}\label{chunk:parameter}
\label{param} Consider the set
\begin{equation*}
    \Upsilon \coloneq \{(k, i,j)\mid 1\leq k \leq n+1, 1\leq i\leq j\leq n\}
\end{equation*} 
equipped with some total order, and set $N = |\Upsilon|-1 = (n+1)\binom{n+1}{2}-1.$ 
We write an element of the projective space $\mathbb P^N$ over $\kk$ as $\bsa=(a_\upsilon)_{\upsilon\in \Upsilon}$, where $a_{\upsilon}\coloneq a_{k ij}\in \kk$ for $\upsilon=(k,i,j)$. For each $\bsa\in \mathbb P^N$  consider the following set of $n+1$ quadrics in $Q$:
\begin{equation} 
 \label{def f}   
 \Big\{f_k=\sum_{1\leq i\leq j\leq n}a_{kij}x_ix_j\Big\}_{1\leq k \leq n+1}.
\end{equation}

Further we set 
\begin{equation*}
J_\bsa \coloneq (f_1,\dots f_n),\quad
I_\bsa \coloneq (f_1,\dots f_{n+1}), \quad\text{and}\quad
R_\bsa \coloneq Q/I_\bsa.
\end{equation*}

Also, fix $\widetilde \bsa\in \mathbb P^N$ such that $R_{\widetilde{\bsa}}=\widetilde R$, where $\widetilde R$ is as in \cref{not:3}.
\end{chunk}

\begin{definition}
\label{def: generic}
We say that a {\it general quotient $R$ of $Q$ defined by $n+1$ quadrics satisfies a property $\mathcal P$} if there exists a nonempty Zariski open subset $U$ of $\mathbb P^N$ such that $R_\bsa$ satisfies $\mathcal P$ for all $\bsa\in U$. 
\end{definition}

\begin{remark}\label{general} A general quotient $R$ of $Q$ defined by quadrics $f_1, \dots, f_{n+1}$ satisfies the following properties:
\begin{enumerate}
    \item $\{f_1, \dots, f_{n}\}$ is a regular sequence, and thus $R$ is an almost complete intersection;
    \item the complete intersection $Q/(f_1,\dots, f_n)$ satisfies the SLP;
    \item $f_{n+1}$ is a maximal rank element on $Q/(f_1,\dots,f_n)$.
\end{enumerate}
Property (1) follows from \cite[Theorem 1]{Froberg-Lofwal-ci}.  In characteristic zero, property (2) follows from the result of Stanley in \cite{Stanley80} and Watanabe in \cite{Wat} that monomial complete intersections have the SLP and from semicontinuity; see for example \cite[Page 81]{MMR03}. Similarly, in characteristic greater than $n$, property (2) follows from the analogous result \cite[Theorem 7.2]{cook} and from semicontinuity.  It follows from (2) that $Q/(f_1,\dots,f_n)$ has a quadratic maximal rank element and (3) now follows from semicontinuity. Observe that properties (1)-(3) hold for $\widetilde R$.  As such, the proofs of the above mentioned results indicate that we can choose an open set $U$ containing $\widetilde \bsa$ such that properties (1)-(3) hold for $R=R_\bsa$ with $\bsa\in U$. 
\end{remark}

\begin{terminology}
\label{RPA}
For the remainder of the paper, we adopt the following convention. In any statement that addresses the properties of a general quotient $R$ of $Q$ defined by $n+1$ quadrics, we assume that $R$ satisfies the properties (1) and (3) in \cref{general}, and we refer to the following rings 
\[
P=Q/(f_1, \dots, f_n) \qquad \text{and} \qquad A=Q/((f_1, \dots, f_n)\colon f_{n+1})
\]
as the \textit{complete intersection ring $P$ associated to $R$} and the \textit{Gorenstein ring $A$ associated to $R$}, respectively, where $f_1, \dots, f_{n+1}$ are defined as in \cref{chunk:parameter}. 
\end{terminology}

\begin{remark}\label{rem: general - hilb}
By \cref{general}, \cref{ex: wlp} and \cref{Hilb R and A},  for a general quotient $R$ of $Q$ by $n+1$ quadrics (more precisely, any $R$ satisfying (1) and (3) in \cref{general}) the following formulas are satisfied, where $A$ is the Gorenstein ring associated to $R$:
\begin{enumerate}
    \item $\fh_R(u)=\fh_{\wtR}(u)=\max \textstyle{\left\{ \binom{n}{u}-\binom{n}{u-2},0\right\}}$ for all $u\ge 0$;
    \item $\fh_A(u)=\fh_{\wtA}(u)=\min\left\{\textstyle \binom{n}{u},\binom{n}{u+2}\right\}$ for all $u\ge 0$.
\end{enumerate}
Additionally, when $n$ is even, Proposition 3.10, Example 3.11, Proposition 3.13, Example 3.14 in \cite{proceedings} give that $R$ is level with socle in degree $n-\ell-1$ when $n\ge 2$ and the defining ideal $G$ of $A$ is generated in degree $2$ and $\ell+1$ when $n\ge 3$. 
\end{remark}

\begin{notation}\label{not:general}
Now we establish notation to be used throughout the remainder of this section.  For $\bsga=(\gamma_1, \dots, \gamma_n)\in \mathbb N^n$, we set ${\bsx^\bsga}\coloneq x_1^{\gamma_1}\dots x_n^{\gamma_n}\in Q$ and $|\bsga|\coloneq\gamma_1+\cdots+\gamma_n$. For each $i\ge 0$ we set 
\[
\Gamma_i\coloneq\{\bsga\in \mathbb N^n\colon |\bsga|=i\}.
\]
For  $i,j$ with $1\le i\le j\le n$ there exists a unique $\bsbe\in \Gamma_2$ such that $x_ix_j=\bsx^\bsbe$, and we set $a_{k, \bsbe}\coloneq a_{k ij}$. In particular, we have equalities 
\begin{equation}\label{def f sum}
f_k=\sum_{\bsbe\in\Gamma_2}a_{k,\bsbe}\bsx^\bsbe\quad\text{for all}\quad 1\leq k\leq n+1.  
\end{equation}
In what follows, we will use the notation ${\bsx^\bsga}$ to also stand for the image of ${\bsx^\bsga}$ in a quotient of $Q$; this will be clear from the context. 
\end{notation} 

\begin{lemma}
\label{U2}
 Let $\wtR$ be as in \cref{not:3} and for each $i\ge 0$ let $\{\bsx^\bsla\}_{\bsla\in \Lambda_i}$ be a basis for the $\kk$-vector space $\wtR_i$. A general quotient $R$ of $Q$ defined by $n+1$ quadrics  has the properties:
\begin{enumerate}[\quad $(1)$]
\item For all $i\ge 0$ the set $\{\bsx^\bsla\}_{\bsla\in \Lambda_i}$ is a basis for $R_i$. 
\item Write $R= R_{\bsa}$, for some $\bsa\in\mathbb{P}^N$. Then for every $i\ge 0$ there exists an integer $m_i\ge 0$ and a homogeneous polynomial $q_i$ in $\bsa$ such that, when writing a monomial in $R_i$ as a linear combination of the basis $\{\bsx^\bsla\}_{\bsla\in \Lambda_i}$,  the coefficients are rational expressions in $\bsa$ with denominator $q_i$ and numerator a homogeneous polynomial  in $\bsa$ of degree $m_i$. 
\end{enumerate}
\end{lemma}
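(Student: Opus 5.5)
The plan is to reduce everything to linear algebra in the fixed degree $i$, using the matrix of the multiplication map $(I_\bsa)_{i-2}$-generators $\to Q_i$. Fix $i\ge 0$. The degree-$i$ component of $I_\bsa$ is spanned by the products $\bsx^\bsga f_k$ with $\bsga\in\Gamma_{i-2}$ and $1\le k\le n+1$. By \eqref{def f sum}, the coordinates of these spanning vectors in the monomial basis $\{\bsx^\bsbe\}_{\bsbe\in\Gamma_i}$ of $Q_i$ are linear forms in $\bsa$. Form the matrix $M_i(\bsa)$ whose rows are these coordinate vectors. Then $R_i=Q_i/(I_\bsa)_i$. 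Saying that $\{\bsx^\bsla\}_{\bsla\in\Lambda_i}$ is a basis of $R_i$ is the same as saying that the monomials $\bsx^\bsbe$ with $\bsbe\in\Gamma_i\setminus\Lambda_i$ (call this set $\Lambda_i'$) form a complement: modulo $(I_\bsa)_i$, each of them is a combination of the basis monomials. We know $\dim_\kk R_i=\fh_{\wtR}(i)=|\Lambda_i|$ for general $\bsa$ by \cref{rem: general - hilb}(1). So $(I_\bsa)_i$ has dimension $|\Lambda_i'|$, and it suffices that the projection of $(I_\bsa)_i$ onto the coordinates indexed by $\Lambda_i'$ is an isomorphism.

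Concretely, for $\widetilde\bsa$ the submatrix of $M_i(\widetilde\bsa)$ with columns indexed by $\Lambda_i'$ has rank $|\Lambda_i'|$, since $\{\bsx^\bsla\}$ is a basis of $\wtR_i$. Choose rows giving a nonzero $|\Lambda_i'|\times|\Lambda_i'|$ minor and let $q_i(\bsa)$ be that minor. It is a homogeneous polynomial in $\bsa$ of degree $|\Lambda_i'|$, and $q_i(\widetilde\bsa)\ne 0$. On the open set $D(q_i)$, the chosen rows span a subspace of $(I_\bsa)_i$ of dimension $|\Lambda_i'|$. Its coordinate projection onto $\Lambda_i'$ is bijective, so $Q_i=\Span\{\bsx^\bsla\}\oplus(\text{that subspace})$. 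Hence the $\bsx^\bsla$ span $R_i$. On the open set where \cref{general}(1),(3) hold, $\dim R_i=|\Lambda_i|$, so they form a basis. (Alternatively, the rank of $M_i(\bsa)$ is at least $|\Lambda_i'|$ on $D(q_i)$ and is otherwise upper semicontinuous, which again forces the subspace to be all of $(I_\bsa)_i$.)

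For part (2), take a monomial $\bsx^\bsbe$ with $\bsbe\in\Lambda_i'$. Solve for the combination of the chosen rows whose $\Lambda_i'$-coordinates equal the unit vector $e_\bsbe$. By Cramer's rule the coefficients are cofactors divided by $q_i$, and each cofactor is homogeneous of degree $|\Lambda_i'|-1$. Subtracting this element of $I_\bsa$ from $\bsx^\bsbe$ gives $\bsx^\bsbe\equiv\sum_\bsla c_\bsla\bsx^\bsla$. Each $c_\bsla$ is (up to sign) a sum of cofactor times a linear entry, divided by $q_i$. So the numerator is homogeneous of degree $m_i=|\Lambda_i'|$, equal to the degree of $q_i$, as it must be for a well-defined function on $\mathbb P^N$. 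Monomials already in $\Lambda_i$ have trivial expressions, which we can write as $q_i/q_i$ so the numerator degree is uniform.

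The only sticking point is the meaning of "general" across all $i$: the lemma needs one open set, but there are infinitely many degrees. This is resolved because $R_i=0$ for $i>n-\ell-1$ by \cref{rem: general - hilb}(1). Every $i$ beyond that has $\Lambda_i=\emptyset$, and (1) then follows from the Hilbert function statement alone (formally, take $q_i=1$, $m_i=0$). So we intersect the finitely many open sets $D(q_i)$, $0\le i\le n-\ell-1$, with the open set $U$ of \cref{general}. All of these contain $\widetilde\bsa$, so the intersection is nonempty, and it gives the required open set.
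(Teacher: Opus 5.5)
Your argument is correct and is essentially the paper's proof. It uses degreewise linear algebra on the coefficients of the products $\bsx^\bsga f_k$, a maximal minor that is nonzero at $\widetilde\bsa$ to produce an open set, the generic Hilbert function to upgrade spanning to a basis, Cramer's rule for (2), and the vanishing of $R_i$ for $i>n-\ell-1$ so that only finitely many open sets are needed. Two things differ from the paper, neither of which changes the substance. First, you keep only the columns indexed by $\Gamma_i\setminus\Lambda_i$ rather than solving the full system with the $b_\bsla$ unknowns. Second, you fix the minor $q_i$ at $\widetilde\bsa$, which makes the denominator in (2) explicit.

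The only slip is in your parenthetical alternative. Rank is lower, not upper, semicontinuous, so the bound $\dim_\kk (I_\bsa)_i\le|\Gamma_i\setminus\Lambda_i|$ has to come from the Hilbert function, exactly as in your main argument.
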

\begin{proof} 
Let $U_1\subseteq\mathbb P^N$ be a nonempty open set containing $\widetilde a$ such that $R_{\bsa}$ satisfies the properties of \Cref{rem: general - hilb}(1) for all $\bsa\in U_1$. Let $\bsa\in U_1$, set $R\coloneq R_{\bsa}$, and let  $f_1, \dots, f_{n+1}$ be the defining quadrics of $R_{\bsa}$, as in \eqref{def f sum}. Note that by  \Cref{rem: general - hilb}(1) we have that $R$ is Artinian and $R_i=0$ for $i>\reg_Q(\widetilde R)$. Thus, it suffices to assume $i\le \reg_Q(\widetilde R)$.

 Let $0\le i\le \reg_Q(\widetilde R)$. Since $\bsa\in U_1$,  we have $\rank_\kk(R_i)=\rank_\kk(\widetilde R_i)=|\Lambda_i|$. Thus, the set $\{\bsx^\bsla\}_{\bsla\in \Lambda_i}$ is a basis for $R_i$ if and only if it is a spanning set for $R_i$.  To check this,  for each $g\in Q_i$ we need to find $b_{\bsla}\in\kk$ and $p_k \in Q_{i-2}$ such that 
\begin{equation}
 \label{eq1}
 \sum_{\bsla\in\Lambda_i}b_\bsla \bsx^\bsla +\sum_{k=1}^{n+1}p_k f_k=g.
\end{equation}
If we write $p_k =\sum_{\bsal\in\Gamma_{i-2}}p_{k,\bsal}\bsx^\bsal$ and $g=\sum_{\bsga\in\Gamma_i}c_{\bsga}\bsx^{\bsga}$ with $p_{k, \bsal},c_{\bsga}\in \kk$, then using \eqref{def f sum}, the  equation \eqref{eq1} can be rewritten as follows:
\begin{equation}
\label{eq2}
\sum_{\bsla\in\Lambda_i}b_\bsla \bsx^\bsla +
\sum_{k=1}^{n+1} \sum_{\small\substack{\bsal\in\Gamma_{i-2}\\ \bsbe\in \Gamma_2}}p_{k,\bsal}a_{k, \bsbe}\bsx^{\bsal+\bsbe}=\sum_{\bsga\in\Gamma_i}c_{\bsga}\bsx^{\bsga}. 
\end{equation}
Equating the coefficients of each $\bsx^{\bsga}$ in \eqref{eq2}, we obtain a system of $|\Gamma_i|$ equations with unknowns $b_{\bsla}$ and $p_{k, \bsal}$, where $\bsla\in \Lambda_i$, $1\le k\le n+1$,  and $\bsal\in \Gamma_{i-2}$. Namely, for each $\bsga\in \Gamma_i$ we have an equation
\begin{equation}
\label{sys}
c_\bsga = \begin{cases} \,
b_\bsga + \displaystyle{\sum_{k=1}^{n+1} \sum_{\bsal+\bsbe=\bsga}p_{k,\bsal}a_{k,\bsbe},}&\ \text{if } 
\bsga\in\Lambda_i\\
\,\displaystyle{\sum_{k=1}^{n+1} \sum_{\bsal+\bsbe=\bsga}p_{k,\bsal}a_{k,\bsbe}},&\ \text{if } 
\bsga\in{\Gamma_i\setminus\Lambda_i}.\\
\end{cases}
\end{equation}
To describe the $|\Gamma_i|\times \big(|\Lambda_i|+(n+1)|\Gamma_{i-2}|\big)$ coefficient matrix ${\cA}$ of this system, we index the rows by $\bsga \in \Gamma_i$ and the columns by the names of the unknowns  $b_{\bsla}$ and $p_{k, \bsal}$. Namely, 
\begin{equation}\label{eq:entries}
\cA_{\bsga, b_{\bsla}}=\begin{cases} 1,&\text{if}\ \bsga=\bsla\\ 0,& \text{if}\ \bsga\not=\bsla\end{cases}\qquad\text{and}\qquad
\cA_{\bsga,p_{k,\bsal}}=\begin{cases}a_{k,\bsbe},&\text{if}\  \bsal+\bsbe=\bsga\\
0,&\ \text{otherwise}.\end{cases}
\end{equation}

Now, we have that $\{\bsx^\bsla\}_{\bsla\in \Lambda_i}$ is a spanning set of $R_i$ if and only if the system \eqref{sys} is consistent for all polynomials $g$. This is equivalent to the fact that the matrix $\cA$ has a nonzero maximal minor of size equal to the number of rows of $\cA$.

Let $\widetilde{\cA}$ represent the corresponding matrix for the ring $R_{\widetilde{\bsa}}=\wtR$. By hypothesis, we have that $\{\bsx^\bsla\}_{\bsla\in \Lambda_i}$ is a basis for $\wtR_i$, thus it follows from the preceding paragraph that $\widetilde{\cA}$ has a nonzero maximal minor of size equal to the number of rows of $\widetilde{\cA}$. Therefore, the number of rows of $\widetilde{\cA}$ is less than or equal to the number of columns, and the same holds for $\cA$ since its size is independent of $\bsa$. Note that by \eqref{eq:entries}, the minors of $\cA$ are polynomials in $\bsa$, so  we may choose a nonempty Zariski open set $V_i$ containing $\widetilde{\bsa}$ such that $\cA$ has a nonzero maximal minor when $\bsa\in V_i$. Now we conclude that $\{\bsx^\bsla\}_{\bsla\in \Lambda_i}$ is a spanning set, and thus a basis, of $R_i$ for all $\bsa\in U_1\cap V_i$. 

We set $U_2\coloneq \big(\bigcap_{i=0}^{r} V_i\big)\cap U_1$, where $r=\reg_Q(\widetilde R)$.  Since $\kk$ is infinite, a finite intersection of nonempty Zariski open sets is also nonempty, and hence the set $U_2$ is a nonempty Zariski open set. Therefore,  $R_\bsa$ satisfies property $(1)$ when $\bsa\in U_2$.   

In fact, $R_\bsa$ also satisfies property (2) whenever $\bsa\in U_2$. Indeed, let $i\ge 0, \bsa\in U_2,$ and $g$ be a monomial in $R_i$. To write $g$ in terms of the basis $\{\bsx^\bsla\}_{\bsla\in \Lambda_i}$, we need to solve the system \eqref{sys}. If this system has some free variables, we choose them to be zero, and then we solve the system using Cramer's rule. The conclusion follows from the description of $\cA$. 
\end{proof}

We now introduce additional notation and conventions that will be used in the proof of the main result of the section, \cref{generic reg}. 

\chunk\label{order} We make the convention that if  $\Gamma\subseteq \mathbb Z$ and  $(\mathcal A_u)_{u\in \Gamma}$ is a collection of pairwise disjoint ordered sets, then the induced order on the union $\bigcup_{u\in \Gamma}\mathcal A_u$ is defined such that it extends the order on each of the sets $\mathcal A_u$, and, if $A\in \mathcal A_u$ and $A'\in \mathcal A_v$ then  $A<A'$ if and only if $u<v$. 

\chunk  
\label{constr res}
Consider the hypersurface  ${\Pu{1}}=Q/(f_1)$ defined by a nonzero quadric 
\[
f_1=\sum\limits_{1\leq i\leq j\leq n} a_{1ij}x_ix_j \quad\text{with}\quad a_{1ij}\in \kk.
\]
Let $(K,\partial^K)$ denote the Koszul complex on $x_1, \dots, x_n$ over  $\Pu{1}$. Note that $K$ is a differential graded (DG-) algebra, and it comes equipped with an internal grading. We write $(K_i)_j$ for the component in homological degree $i$ and internal degree $j$. Fix an ordered basis $E_1,\dots, E_n$ of $K_1$ with $\partial^{K}_1(E_i)=x_i$.    
For every $0\leq i \leq n$, define
\[
\mathcal B_i=\{(b_1,\dots, b_i)\in \mathbb Z^i\colon 1\leq b_1< \dots < b_i\leq n\}.
\]
If $B\in \mathcal B_i$, we write $E_B\coloneq E_{b_1}\wedge\dots\wedge E_{b_i}\in (K_i)_{i}$, where $E_B=1$ if $B$ is the empty tuple. Then an ordered basis of $K_i$ consists of all $E_B$ with $B\in \mathcal B_i$, ordered using the  lexicographic order on $\mathcal B_i$. For convenience, we set $\mathcal B_i=\emptyset$ when $i<0$. 

Recall that for all $1\leq i\leq n$, for $B\in\mathcal{B}_i$, the $i$-th differential of $K$ is given by 
\begin{equation*}
    \partial_i^{K}(E_B)=\sum_{j=1}^i(-1)^jx_{b_j}E_{B\setminus b_j}.
\end{equation*}

Define a cycle element $z\in (K_1)_2$  by
\[
z=\sum\limits_{1\leq i\leq j\leq n} a_{1ij}x_iE_{j}=\sum_{j=1}^n\Big(\sum_{1\leq i\leq j} a_{1ij}x_i\Big)E_{j}.
\] 

For each $i\ge 0$ we define a homomorphism $\zeta_i\colon K_i(-2)\to K_{i+1}$ of graded $\Pu{1}$-modules by

\begin{equation*}
\label{def zeta}
    \zeta_i(E_B)\coloneq z\wedge E_B= \sum_{j\not\in B}(-1)^{|j|_B}\Big(\sum_{1\leq i\leq j} a_{1ij}x_i\Big)E_{B \cup \{j\}},
\end{equation*}
where $B=(b_1, \dots, b_i)\in \mathcal B_i$. Here,  $B \cup \{j\}$ denotes the ordered tuple obtained by inserting $j$ into $B$ and $|j|_B$ denotes the largest integer $k$ such that $j>b_k$, and if no such integer exists, we set $|j|_B=0$. By \cite[Theorem 1]{NV}, one can describe the graded minimal free resolution of $\kk$ over ${\Pu{1}}$, denoted  $(F,\partial^F)$, as follows.
For all $i\in\BZ$ set  
\begin{equation*}
\label{e:Fdefn}
F_i\coloneq K_i\oplus K_{i-2}(-2)\oplus K_{i-4}(-4)\oplus\dots\,.
\end{equation*}
For each $i\ge 0$, consider an ordered basis of $F_i$ consisting of the elements $E_B$ (with internal degree shifted accordingly) with $B\in \bigcup_{u\ge 0}\mathcal B_{i-2u}$; the order on the union is induced by the lexicographic order of each $\mathcal B_j$ as in \ref{order}. Define the graded $\Pu{1}$-module homomorphisms
\begin{equation*}
    \partial_i^F\colon F_i\to F_{i-1}\quad\text{given by}\quad  
    \partial_i^F\coloneqq\begin{pmatrix}
\partial^K_{i} & \zeta_{i-2} & 0 & 0 & 0 & \cdots \\[0.2cm]
0 & \partial^K_{i-2} & \zeta_{i-4} & 0 & 0 & \cdots \\[0.2cm]
0 & 0 & \partial^K_{i-4} & \zeta_{i-6} & 0 & \cdots \\[0.2cm]
\vdots & \vdots  & \vdots & \ddots & \ddots &
\end{pmatrix},
\end{equation*}
as block matrices with respect to our fixed bases. Note that each entry of $\partial_i^F$ has degree $1$.

\begin{remark}
 \label{rmk: res k} 
 Observe that for each $i\ge 0$ we have an isomorphism of graded $\Pu{1}$-modules 
 \[
 K_i\cong (\Pu{1})^{\binom{n}{i}}(-i) \quad\text{and}\quad F_i\cong \Pu{1}^{\beta_i}(-i),\qquad \text{where}\quad  \beta_i\coloneq\sum_{j\ge 0}\binom{n}{i-2j}
\]
for all $i\geq 0$.  Moreover, since $K_i=0$ for all $i\geq n+1$, the resolution $(F,\partial^F)$ is periodic with $F_i=F_{i+2}$  for all $i\geq n-1$ and $\partial_i^F=\partial^F_{i+2}$ for all $i\geq n$. 
\end{remark}

\begin{theorem}
\label{generic reg}
Let $n\ge 2$ be an integer and set $\ell=\textstyle{\bigl\lfloor\frac{n-2}{2}\bigr\rfloor}$. Let $\kk$ be  an infinite field of characteristic zero or greater than $n$ and $Q=\kk[x_1, \dots, x_n]$. 
A general quotient $R$ of $Q$ defined by $n+1$ quadrics $f_1, \dots, f_{n+1}$ has the property:
\[
\reg_{\Pu{1}}(R)=\ell+1,
\]
where $\Pu{1}=Q/(f_1)$. Consequently, $\reg_P(A)=\ell$ and $\reg_P(R)=\ell+1$, where $P$ is the complete intersection ring and $A$ is the Gorenstein ring associated to $R$.
\end{theorem}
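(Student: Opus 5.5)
The plan is a semicontinuity argument that transfers the equality $\reg_{\wtP_{\,[1]}}(\wtR)=\ell+1$ of \cref{p:reg ex} from the special point $\widetilde\bsa$ to a nonempty Zariski open set. By \cref{reg}(2), any $R$ satisfying properties (1) and (3) of \cref{general} already has $\ell+1\le\reg_{\Pu{1}}(R)\le\ell+2$. So it suffices to show that $\beta^{\Pu{1}}_{i,i+\ell+2}(R)=0$ for all $i$ on an open set containing $\widetilde\bsa$. Once this holds, \cref{p:Betti-over-P}(3a,b) gives $\reg_P(R)=\ell+1$ and $\reg_P(A)=\ell$.

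Since the Betti numbers of $R$ over $\Pu{1}$ are computed by $\Tor^{\Pu{1}}_i(\kk,R)$, we use the resolution $F$ of $\kk$ over $\Pu{1}$ from \cref{constr res} and tensor it with $R$. Its differentials are block matrices whose entries are the linear forms $x_j$ and the linear forms $\sum_{i\le j}a_{1ij}x_i$, so they depend polynomially on $\bsa$. The complex is periodic for $i\ge n$ by \cref{rmk: res k}. Hence only the finitely many homological degrees $i\le n+1$ matter, together with one representative pair in the periodic range. Internal degrees are bounded because $R$ is Artinian with $R_j=0$ for $j>\reg_Q(\wtR)$.

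In internal degree $j=i+\ell+2$ the relevant component is
\[
(F_{i+1}\otimes R)_{j}\to (F_i\otimes R)_{j}\to (F_{i-1}\otimes R)_{j},
\]
with $(F_i\otimes R)_j\cong R_{\ell+2}^{\beta_i}$. We express these maps in the monomial bases $\{\bsx^\bsla\}_{\bsla\in\Lambda}$ of \cref{U2}, valid for $\bsa\in U_2$. Multiplying a basis monomial by a linear form and rewriting in the basis gives coefficients that are rational functions of $\bsa$ with fixed denominators $q_{\ell+2}$ (by \cref{U2}(2)). After clearing denominators, the matrices $M_i(\bsa)$ have polynomial entries. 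The ranks of the source and target are fixed, since $\fh_R=\fh_{\wtR}$. The Betti number equals $\dim\ker M_i-\operatorname{rank}M_{i+1}$, and this equals $\beta_i\fh_R(\ell+2)-\operatorname{rank}M_i-\operatorname{rank}M_{i+1}$. Ranks are lower semicontinuous: the locus where $\operatorname{rank}M_i(\bsa)\ge\operatorname{rank}M_i(\widetilde\bsa)$ is open (a nonvanishing minor) and contains $\widetilde\bsa$. On the intersection of these finitely many open sets, $\beta_{i,i+\ell+2}^{\Pu{1}}(R)\le\beta_{i,i+\ell+2}^{\wtP_{\,[1]}}(\wtR)=0$. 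Intersecting also with $U_2$ and the open set of \cref{general} gives a nonempty open set, because $\kk$ is infinite.

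The main obstacle is making the matrices uniform in $\bsa$. The basis must be the same monomial set for all $\bsa$ in the open set, and the coordinate description of $F\otimes R$ must specialize at $\widetilde\bsa$ exactly to the corresponding complex for $\wtR$ over $\wtP_{\,[1]}=Q/(x_1^2)$. The parametrization has to be arranged so that $f_1$ at $\widetilde\bsa$ is $x_1^2$; this may require fixing $\widetilde\bsa$ with the right ordering of the quadrics, using the symmetry \eqref{e:symm}. Also, \cref{U2} guarantees a common basis only for $R$ and $\wtR$, and the reduction in $R$ is the only place where $\bsa$ enters nonlinearly, so it is handled by clearing the denominators $q_j$. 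With these bookkeeping points settled, the semicontinuity step itself is routine.
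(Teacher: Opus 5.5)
Your proposal is correct and is essentially the paper's proof: tensor the resolution $F$ of $\kk$ over $\Pu{1}$ with $R$, and write the strand in internal degree $i+\ell+2$ in the common monomial basis of \cref{U2}, with entries rational in $\bsa$ over the denominator $q_{\ell+2}$. Then use the periodicity of \cref{rmk: res k} to reduce to finitely many $i$, and spread the vanishing at $\widetilde\bsa$ given by \cref{p:reg ex} to an open set via nonvanishing minors. The paper differs only in noting that $R_{\ell+3}=0$, so the outgoing map is zero and the condition is just surjectivity of $d_{i+1}$, whereas you use semicontinuity of both ranks. In either version one should also observe, as the paper does, that the minors are homogeneous in $\bsa$: the entries in block $(v,u)$ have degree $m_{\ell+2}+u-v$, so these loci are genuinely Zariski open in $\mathbb P^N$.
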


\begin{proof} 
Let $U$  be the nonempty open set on which the properties from \cref{U2} hold.  Let $\bsa\in U$ and set $R\coloneq R_\bsa$.

When $n$ is even, the  statement is given by \cref{reg}(2). When $n$ is odd, \cref{reg}(2)  gives $\ell +1 \le \reg_{\Pu{1}}(R)\le \ell+2$.  In this case, it remains to show  that, after restricting $U$ to a possibly smaller nonempty open set, we have
\begin{equation}
\label{Tor 0}
    \Tor_i^{\Pu{1}}(R,\kk)_{i+\ell+2}=0\quad\text{for all } i\geq 0.\quad 
\end{equation}
Let $(F,\partial^F)$ be  the graded minimal free resolution  of $\kk$ as a $\Pu{1}$-module, as described in \Cref{rmk: res k} and set $\overline F=F\otimes_{\Pu{1}}R$. 

Fix $i\ge 0$. We have  $\Tor_i^{\Pu{1}}(R,\kk)_{i+\ell+2}=\HH_{i}(\overline F)_{i+\ell+2}$ and for all $u\ge 0$: 
\begin{equation}
\label{RF}
(\overline F_u)_{i+\ell+2}\cong \left(R^{\beta_u}(-u)\right)_{i+\ell+2}=(R_{i+\ell+2-u})^{\beta_u},
\end{equation}
with $\beta_u$ as in \cref{rmk: res k}. 
The module $\Tor_i^{\Pu{1}}(R,\kk)_{i+\ell+2}$ can be computed as the homology in the middle spot of the sequence
\begin{equation*}
(\overline F_{i+1})_{i+\ell+2}\xra{d_{i+1}} (\overline F_i)_{i+\ell+2}\xra{d_{i}}(\overline F_{i-1})_{i+\ell+2},
\end{equation*}
where $d_i$ and $d_{i+1}$ are the restrictions to internal degree $i+\ell+2$ of the differential of $\overline F$. By \eqref{RF}, the codomain of $d_i$ is isomorphic to $(R_{\ell+3})^{\beta_u}$. By \ref{Hilb R and A}(1), $R_{\ell+3}=0$, hence $d_i=0$. Thus,  \eqref{Tor 0} is equivalent to  \begin{equation}
\label{surj d}
    d_{i+1}\text{ is surjective for all $i\geq 0$. }
\end{equation}
For each $v\ge 0$, we work with the choice of basis for the free $R$-module $F_{v}$ described in \ref{constr res}, and we use the induced basis on $\overline{F}_v$. This basis lives in internal degree $v$, and hence for $j\ge 1$ the $\kk$-vector space  $(\overline F_{v})_{v+j}$ has a basis consisting of the elements $\bsx^\bsla E_B$ with $\bsla\in \Lambda_{j}$ and $B\in \bigcup_{u\ge 0}\mathcal B_{v-2u}$, where $\Lambda_{j}$ is chosen as in \cref{U2} so that $(\bsx^\bsla)_{\bsla\in \Lambda_j}$ is a basis for $R_j$. This basis has size $\beta_u|\Lambda_j|$ and its order is induced by the order on the index set $\bigcup_{u\ge 0}(\mathcal B_{v-2u}\times \Lambda_j)$, where the Cartesian products are ordered lexicographically, using the lex order on each component, and the union has the induced order described in \ref{order}.

Let $d_{i+1}[u,v]$ denote the component of $d_{i+1}$ with
\[
d_{i+1}[u,v]\colon \Span_\kk\left(\bsx^\bsla E_B\colon (B,\bsla)\in \mathcal B_{i+1-2u}\times \Lambda_{\ell+1}\right)\to \Span_\kk\left(\bsx^\bsla E_B\colon  (B,\bsla)\in \mathcal B_{i-2v}\times \Lambda_{\ell+2}\right).
\]

Observe that $d_{i+1}[u,v]=0$ for all $u,v$ such that $v\notin\{ u, u-1\}$. When $v=u$, the map $d_{i+1}[u,u]$ is a restriction of the differential $\partial_{i+1-2u}^K$ and when $v=u-1$ the map $d_{i+1}[u,v]$ is a restriction of $\zeta_{i+1-2u}$. We use overbars to denote these restrictions, regarded as maps of $\kk$-vector spaces, so that we have
\begin{equation}
\label{def d i+1}
    d_{i+1}=\begin{pmatrix}
\overline\partial_{i+1}^K & \overline\zeta_{i-1} & 0 & 0 & 0 & \cdots \\[0.2cm]
0 & \overline \partial_{i-1}^K & \overline\zeta_{i-3} & 0 & 0 & \cdots \\[0.2cm]
0 & 0 & \overline \partial_{i-3}^K & \overline\zeta_{i-5} & 0 & \cdots \\[0.2cm]
\vdots & \vdots  & \vdots & \ddots & \ddots &
\end{pmatrix}.
\end{equation}
For each $k\in [n]$, let $\chi_k$ denote the multiplication map $R_{\ell+1}\xra{x_k\cdot}R_{\ell+2}$, where the bases of $R_{\ell+1}$ and $R_{\ell+2}$ are chosen as indicated above. 
We identify all maps above with their matrices with respect to the chosen bases. Observe that the matrices $\partial_j^K$ and $\zeta_j$ have entries that are linear in the variables $x_1, \dots, x_n$, and thus the matrices $\overline \partial_j^K$ and $\overline \zeta_j$ can be obtained by replacing each variable $x_k$ in the entries of $\partial_j^K$ and $\zeta_j$ with the matrix $\chi_k$. Note that the matrix  $d_{i+1}$ has size $\beta_i|\Lambda_{\ell+2}|\times \beta_{i+1}|\Lambda_{\ell+1}|$.  

Using \cref{U2} and the notation therein, we see that for each $k\ge 0$ the nonzero entries of the matrix $\chi_k$ are rational expressions in $\bsa$ with denominator $q_{\ell+2}$ and numerator a homogeneous polynomial in $\bsa$ of degree $m_{\ell+2}$. Further, using the descriptions of the maps $\partial^K_j$ and $\zeta_j$, we see that the nonzero entries of the matrix $q_{\ell+2}\overline \partial^K_j$ are polynomials in $\bsa$ of degree $m_{\ell+2}$ and the nonzero entries of the matrix $q_{\ell+2}\overline \zeta_j$ are polynomials in $\bsa$ of degree $m_{\ell+2}+1$. 

The proof of \cref{U2} shows that $\Tilde \bsa\in U$ where $\Tilde{\bsa}$ is such that $R_{\Tilde{\bsa}}=\wtR$. Let $\widetilde{d_{i+1}}$ represent the corresponding matrix. By \cref{p:reg ex} we have that $\Tor_i^{\wtP_{\,[1]}}(\wtR,\kk)_{i+\ell+2}=0$, where $\wtP_{\,[1]}=Q/(x_1^2)$, and thus $\widetilde{d_{i+1}}$ is surjective. In particular, the number of rows of the matrix $\widetilde{d_{i+1}}$ is less than or equal to the number of columns, and the same holds for $d_{i+1}$ since its size is independent of $\bsa$. Surjectivity of the map $d_{i+1}$ is then equivalent to the matrix $d_{i+1}$ having a nonzero maximal minor. 

A cofactor expansion shows that the determinant of any maximal minor of $q_{\ell+2}d_{i+1}$ is a homogeneous polynomial in $\bsa$. Thus, we may choose a Zariski open set $W_{i+1}\subseteq\mathbb P^N$ such that the matrix $d_{i+1}$ has a nonzero maximal minor when $\bsa\in W_{i+1}$. Since $\widetilde{d_{i+1}}$ is surjective, we may choose this set to contain $\widetilde \bsa$. By \Cref{rmk: res k}, the resolution $F$ becomes periodic after $n+1$ steps, so for all $i\geq 0$, the matrix $d_{i+1}$ has a nonzero maximal minor when $\bsa$ is chosen in the Zariski open subset $V$ of $\mathbb P^N$, where $V\coloneq U\cap \bigcap_{i=0}^{n+2} W_{i+1}$.  Thus we have shown \eqref{surj d}, and consequently \eqref{Tor 0}.  In conclusion, we see that $\reg_{\Pu{1}}(R)\le \ell+1$ when $\bsa\in V$. 

The last statement regarding regularity over $P$ follows now directly from \Cref{p:Betti-over-P}(3b) and (3a), respectively.
\end{proof}

Below we summarize some key properties satisfied by a general quotient by $n+1$ quadrics that we will need in the proofs of our results throughout the remainder of the paper.  In fact, the hypotheses of every subsequent result about a general quotient by $n+1$ quadrics can be replaced by these properties. 

\begin{remark}
\label{d:newgeneric}
A  general quotient $R$ of $Q$ by $n+1$ quadrics $f_1, \dots, f_{n+1}$ satisfies the following properties (with the notation of \cref{not:1}): 
\begin{enumerate}[\quad (a)]
 \item  $f_1, \dots, f_n$ is a regular sequence and $f_{n+1}$ is a maximal rank element of $P$; 
 \item $\reg_{\Pu{1}}(R)=\ell+1$ (and hence $\reg_P(A)=\ell$ and $\reg_P(R)=\ell+1$ by \cref{p:Betti-over-P}).
 \end{enumerate}
Indeed, let $U_1$ be an open set such that $R_\bsa$ satisfies the conclusions of \cref{generic reg} for all $\bsa\in U_1$ and let $U_2$ be an open set such that $R_\bsa$ satisfies (1) and (3) of \cref{general} for all $\bsa\in U_2$.   Thus the desired properties hold when  $R=R_\bsa$ with $\bsa\in U_1\cap U_2$. 
\end{remark}

%%%%%%%%%%%%%%%%%%%%%%%%%%%%%%%%%%%%%%%%%%%%%%%%%%%%%%%%%%%%%%%%%%%%%%%%%%%%%%%%%%%%%%%%%%%%%%%%%%%%%%%%%%%%%%%%%%%%%%%%%
\section{Computation of Poincar\'e series}\label{sec:Poincare}

In this section, let $\kk$ be an infinite field of characteristic zero or positive characteristic greater than $n$ where $n\ge 2$ is a fixed integer and $Q=\kk[x_1,\dots,x_n]$ a standard graded polynomial ring. We continue the study of a general quotient $R$ of $Q$ defined by $n+1$ quadrics and of the associated  Gorenstein ring $A$; see \Cref{def: generic} and \Cref{d:newgeneric}. The main result is \Cref{t:generic}, which establishes the existence of Golod homomorphisms (see \Cref{def:golod}) from a graded complete intersection ring onto the rings $R$ and $A$. Thus all finitely generated graded modules over $R$, respectively $A$, have rational Poincar\'e series sharing a common denominator; see \cite[Proposition 5.18]{AKM}. This property is known to hold for absolutely Koszul algebras, and can be regarded as a generalization of the absolutely Koszul property. 
Furthermore, we provide explicit formulas for the Poincar\'e series of the residue field over the rings $R$ and $A$.

First we establish a lemma that is needed for the proof of the theorem. 

\begin{lemma}
\label{l:ab}
Adopt the notation and hypotheses in \cref{not:1} and set 
\begin{equation*}
\label{def:ab}
a_i\coloneq\beta_{i,i+\ell}^P(A)\quad\text{and}\quad b_i\coloneq\beta_{i,i+\ell+1}^P(A)\quad\text{for all}\ i\geq 0.
\end{equation*}
If $f_{n+1}$ is a quadratic maximal rank element of $P$, then the following equality holds:
\begin{equation}
\label{e:2strands}
\Po_A^P(t,u)=1+u^\ell\sum_{i\ge 1} (a_i+b_iu)(tu)^i\,.
\end{equation}
In particular, if $\reg_P(A)= \ell$,  that is,  $b_i=0$ for all $i\geq 1$, then the following equality holds:
\begin{align}
\Po_A^P(t,u)=1+\frac{(1-tu)^n-\Hilb_A(-tu)}{(-1)^{\ell+1}t^\ell (1-tu)^n}\label{e:linearhilb}.
\end{align}
\end{lemma}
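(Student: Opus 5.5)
The first formula is a repackaging of \cref{p:Betti-over-P}(2). The second will come from \cref{a-linear} applied to the linear first syzygy module.

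\textbf{The two-strand formula \eqref{e:2strands}.} By \cref{p:Betti-over-P}(2), for $i>0$ the only possibly nonzero $\beta_{i,j}^P(A)$ occur when $j-i\in\{\ell,\ell+1\}$. Thus $j=i+\ell$ (contributing $a_i$) or $j=i+\ell+1$ (contributing $b_i$). Since $A$ is a cyclic $P$-module generated in degree $0$, we have $\beta_{0,0}^P(A)=1$ and $\beta_{0,j}^P(A)=0$ for $j\ne0$. Summing,
\[
\Po_A^P(t,u)=1+\sum_{i\ge1}\bigl(a_it^iu^{i+\ell}+b_it^iu^{i+\ell+1}\bigr)=1+u^\ell\sum_{i\ge1}(a_i+b_iu)(tu)^i .
\]

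\textbf{The case $b_i=0$.} Now assume $\reg_P(A)=\ell$, so $b_i=0$ for all $i\ge1$. Set $L=G/J=\ann_P(f_{n+1})$. Shifting the minimal resolution of $A=P/L$ gives $\beta_{i,j}^P(L)=\beta_{i+1,j}^P(A)$. Hence $L$ is $(\ell+1)$-linear, as in \cref{p:Betti-over-P}(3d). By \cref{a-linear},
\[
\Po_L^P(t,u)=\frac{\Hilb_L(-tu)}{(-t)^{\ell+1}\Hilb_P(-tu)} .
\]
I will use $\Po_A^P=1+t\,\Po_L^P$, together with the following two facts:
\begin{itemize}
\item $\Hilb_L=\Hilb_P-\Hilb_A$, from the exact sequence $0\to L\to P\to A\to 0$;
\item $\Hilb_P(z)=(1+z)^n$, since $P$ is a complete intersection of $n$ quadrics, so that $\Hilb_P(-tu)=(1-tu)^n$.
\end{itemize}
Substituting these gives
\[
\Po_A^P(t,u)=1+t\cdot\frac{(1-tu)^n-\Hilb_A(-tu)}{(-1)^{\ell+1}t^{\ell+1}(1-tu)^n}
=1+\frac{(1-tu)^n-\Hilb_A(-tu)}{(-1)^{\ell+1}t^{\ell}(1-tu)^n}.
\]
This is exactly \eqref{e:linearhilb}.

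The only points needing care are the sign and shift conventions in \cref{a-linear} and the reindexing $i\mapsto i+1$ between $L$ and $A$. None of the steps is a real obstacle.
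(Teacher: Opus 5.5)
Your proposal is correct and follows the paper's proof essentially step for step. You read off the two strands from \cref{p:Betti-over-P}(2), apply \cref{a-linear} to the $(\ell+1)$-linear ideal $L=\ann_P(f_{n+1})$ with $\Hilb_L=(1+t)^n-\Hilb_A$, and conclude using $\Po_A^P(t,u)-1=t\,\Po_L^P(t,u)$.
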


\begin{proof} By \cref{p:Betti-over-P}(2), we know that $\beta_{i,j}^P(A)=0$ when $j\not\in\{i+\ell,i+\ell+1\}$ and $i>0$.
Thus, \eqref{e:2strands} follows from the definition of Poincar\'e series. 

Next, assume that $\reg_P(A)=\ell$. Then $b_i=0$ for all $i\ge 1$, so by 
\eqref{e:2strands} we have
\begin{align}
\Po_A^P(t,u)&=1+u^\ell\sum_{i\ge 1} a_i(tu)^i. \label{e:linear}
\end{align}
Notice that the ideal $L=\ann_P(f_{n+1})$ of $P$ has an $(\ell+1)$-linear resolution, since $A\cong P/L$.
Thus by \cref{a-linear}, the Poincar\'e series of the  $P$-module $L$ is given by
\[
\Po_{L}^P(t,u)=\frac{\Hilb_{L}(-tu)}{(-t)^{\ell+1}\Hilb_P(-tu)}\,.
\]
We also have equalities: 
\[
\Hilb_{L} (t)=\Hilb_P(t)-\Hilb_A(t)=(1+t)^n-\Hilb_A(t)\,,
\]
where the second equality follows since $P$ is a complete intersection defined by $n$ quadrics.
Thus, \eqref{e:linearhilb} follows by observing that $\Po_A^P(t,u)-1=t\cdot\Po_{L}^P(t,u)$. 
\end{proof}

Now we prove the main theorem of this section.  In part (1) of the theorem we assume that $n\ge 4$; we address smaller values of $n$ in \cref{small n}.  Note that the conclusion that $R$ is not Koszul when $n\ge 4$ in part (2) also follows from \cite[Theorem 7.1]{Froberg-Lofwal}; see also \cite[Lemma 4.2]{McCullough-Seceleanu}.

\begin{theorem}
\label{t:generic}
Let $n\geq 2$ be an integer and set $\ell=\textstyle{\bigl\lfloor\frac{n-2}{2}\bigr\rfloor}$.  Let $\kk$ be an infinite field of characteristic zero or greater than $n$ and $Q=\kk[x_1, \dots, x_n]$. 
A general quotient $R$ of $Q$ defined by $n+1$ quadrics $f_1, \dots, f_{n+1}$ has the following properties, where $P$ is the complete intersection ring and $A$ is the Gorenstein ring associated to $R$. 
 \begin{enumerate}[\quad $(1)$]
\item If $n\ge 4$, then the natural projection $P\to A$ is Golod and the Poincar\'e series of $\kk$ over $A$ is 
\begin{align}
 \Po_\kk^A(t,u)
 \label{e: kA over Hilb}
               &=\frac{(-t)^{\ell-1}}{\Hilb_A(-tu)+(1-tu)^n((-t)^{\ell-1}-1)}.
\end{align}
\vspace{1pt}

Furthermore, $A$ is absolutely Koszul if $n=4$ or $n=5$ and is not Koszul when $n\ge 6$.

\item The natural projection $P\to R$ is Golod and the Poincar\'e series of $\kk$ over $R$ is
\begin{align}
 \Po_\kk^R(t,u)
 \label{e: kR over Hilb}
 &=\frac{(-t)^{\ell}}{\Hilb_R(-tu)+(1-tu)^n((-t)^{\ell}-1)(1-t^2u^2)}.
 \end{align}
 \vspace{1pt}
 
Furthermore, $R$ is absolutely Koszul if $n=2$ or $n=3$ and is not Koszul when $n\ge 4$.
\end{enumerate}
\end{theorem}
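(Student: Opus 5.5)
Throughout I work with a ring $R$ satisfying the properties of \cref{d:newgeneric}. In particular $\reg_P(A)=\ell$, $\reg_P(R)=\ell+1$, the $P$-ideal $L=\ann_P(f_{n+1})$ has an $(\ell+1)$-linear resolution, and the resolution of $R$ over $P$ is concentrated in the strand $j-i=\ell+1$ for $i\ge 2$, plus the single generator $f_{n+1}$ in $\beta_{1,2}$.

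\textbf{Step 1: Golodness.} The key step is to show that $P\to A$ (for $n\ge 4$) and $P\to R$ are Golod. I would apply the criterion deferred to the Appendix (\cref{l:Massey-lemma}, \cref{p:Golod-I}). The idea is to realize $\Tor^P(A,\kk)$ as the homology of $\kk\otimes_P$ (a resolution) and to show that all Massey products vanish for degree reasons. For $A$: every class in $\Tor^P_{\ge1}(A,\kk)$ has internal degree $i+\ell$ with $\ell\ge1$, so the internal degree is at least the homological degree plus $1$. An $r$-fold Massey product of classes in homological degrees $i_1,\dots,i_r$ lands in homological degree $\sum i_k+r-1$ and internal degree $\sum(i_k+\ell)$. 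For $r\ge2$ we have
\[
\sum(i_k+\ell)-\Bigl(\sum i_k+r-1\Bigr)=r\ell-r+1.
\]
This equals $\ell$ only when $r\ell-r+1=\ell$, that is, $(r-1)(\ell-1)=0$, which forces $\ell=1$. So for $\ell\ge2$ the Massey product lands in a zero strand and vanishes, because the Betti numbers are concentrated in the single strand $j-i=\ell$.

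The case $\ell=1$ ($n=4,5$) needs separate treatment. There $L$ is $2$-linear, so \cref{prop:abs-koszul} gives that $A$ is absolutely Koszul. Since $A$ is then Koszul, I would deduce Golodness directly by comparing $\Hilb_A(-tu)^{-1}$ with the Golod bound.

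For $R$: the classes are $f_{n+1}$, of bidegree $(1,2)$, and the strand $j-i=\ell+1$. I expect this to be the main obstacle, because of the mixed strand; this is presumably exactly what \cref{p:Golod-I} is designed to handle. Via \eqref{ses APR}, $R$ is $P/(f_{n+1})$ and $A=P/L$, and there is a linkage relation $\ann(f_{n+1})=L$. I would try to show that products involving the class of $f_{n+1}$ are killed using $f_{n+1}L=0$, while the other products vanish by the same degree count as for $A$ (shifted by one strand).

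\textbf{Step 2: formulas.} With Golodness established,
\[
\Po^A_\kk=\frac{\Po^P_\kk}{1-t(\Po^P_A-1)},\qquad \Po^P_\kk(t,u)=\frac{(1+tu)^n}{(1-t^2u^2)^n},
\]
since $P$ is a quadratic complete intersection. Substituting \eqref{e:linearhilb} from \cref{l:ab} and clearing denominators should yield \eqref{e: kA over Hilb}. Note that $(1+tu)^n/(1-t^2u^2)^n=(1-tu)^{-n}$, which is why $(1-tu)^n$ appears in the formula.

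For $R$, I would use $\Po_R^P-1=t^2u^2\,\Po^P_A+tu^2-t^2u^2$, or more precisely
\[
\Po^P_R(t,u)=1+tu^2\,\Po^P_A(t,u),
\]
which follows from the Tor isomorphism in \cref{APR}. Combining this with \eqref{hilb APR} to rewrite everything in terms of $\Hilb_R$ should give \eqref{e: kR over Hilb}; the factor $(1-t^2u^2)$ comes from $\Hilb_P=(1+t)^n$ versus $(1-tu)^n$.

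\textbf{Step 3: Koszul statements.} For $R$ with $n=2,3$ we have $\ell=0$, so $\Ker(P\to R)=(f_{n+1})$ together with its $2$-linear resolution; by \cref{p:Betti-over-P}(3a) this is $(\ell+1)=1$-linear in the syzygy-strand sense, and \cref{prop:abs-koszul} applies. Non-Koszulness for $n\ge4$ (respectively $n\ge 6$ for $A$) follows because the computed Poincar\'e series differs from $\Hilb(-tu)^{-1}$; equivalently, $\ell\ge1$ (respectively $\ell-1\ge1$) forces a nonlinear Betti number of $\kk$, visible from the factor $(-t)^{\ell}$ in the formula.
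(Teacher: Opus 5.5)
Your Steps 2 and 3 are fine. Your first expression for $\Po^P_R-1$ is wrong, but the ``more precisely'' formula $\Po^P_R=1+tu^2\Po^P_A$ is the paper's \eqref{e:p R over P}. Your use of \cref{prop:abs-koszul} for $R$ when $n=2,3$ is correct. The gap is in Step~1, and it starts with a degree miscount. When building a trivial Massey operation, the element that must be a boundary is $\omega=\sum_k\ov{\mu(h_{\downarrow k})}\mu(h_{\uparrow k})$. It lies in homological degree $i(h)-1=\sum i_k+r-2$, not $\sum i_k+r-1$; the latter is the degree of $\mu(h)$. So the strand that must be empty is $j(h)-(i(h)-1)=\sum_k(j_k-i_k)-(r-2)$. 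For $A$ this equals $(r-1)(\ell-1)+\ell+1\ge\ell+1$ for every $\ell\ge1$, so $\reg_P(A)=\ell$ kills all obstructions uniformly. This is \cref{p:Golod-I}(I) with $c=1$, $b=\ell$. Your separate treatment of $\ell=1$, via Koszulness of $A$ and equality in the Golod bound, is correct but unnecessary.

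For $R$, the same miscount makes you target the wrong products. With the correct count, every tuple containing at least one class from strand $\ell+1$ has obstruction in strand $\ge\ell+2$. There $\HH(\mathcal A)=0$ because $\reg_P(R)=\ell+1$. This covers all mixed products with the class $h_0$ of $f_{n+1}$, so nothing like $f_{n+1}L=0$ is needed. It is also unclear how that relation would enter $R\otimes_P\cD$, where $f_{n+1}$ is already $0$. The genuinely dangerous tuples are $(h_0,\dots,h_0)$, whose obstruction lies in $\HH_{2r-2}(\mathcal A)_{2r}$, in strand $2$. For $n=4,5$ this group is nonzero at $r=2$: $\HH_2(\mathcal A)_4=\Tor_2^P(R,\kk)_4\cong\Tor_1^P(A,\kk)_2$ has dimension $C_3=5$. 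No degree argument can handle this case. The missing idea is that a cycle $z_0$ representing $h_0$ has odd homological degree, so $z_0^2=0$ by graded-commutativity of $\mathcal A$ (using $\ch\kk\ne2$). This allows $\mu=0$ on every tuple consisting only of copies of $h_0$. That is exactly \cref{p:Golod-I}(II) with $c=1$, $b=\ell+1$, $(i_0,j_0)=(1,2)$; for $n\le3$, (I) with $b=1$ suffices. Finally, you never check \eqref{eq:tmo3}, that $\mu$ takes values in $\m\mathcal A$. The paper gets the needed hypothesis \eqref{e:Tors} from \cref{l:powers-zero-map}, since both kernels lie in $\p^2$. Here it also follows from degrees, because $P$ is Koszul and every relevant bidegree has internal degree larger than homological degree.
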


\begin{proof}  

We may assume that $R$ satisfies the properties listed in \cref{d:newgeneric}. In particular, $P$ is a complete intersection, $f_{n+1}$ is a maximal rank element on $P$, $\reg_P(A)=\ell$, and $\reg_P(R)=\ell+1$.

(1): Since $n\ge 4$, we see that $\ell\ge 1$, and hence the kernel of the natural projection map $\pi\colon P\to A$ has generators in degrees at least $2$ by \cref{Hilb R and A}(2b). 
By \cref{l:powers-zero-map} in the Appendix, the map 
\[
\Tor_i^P(A/\n^2,\kk)\to \Tor_i^P(A/\n,\kk)
\]
induced by the canonical projection $A/\n^2\to A/\n$ is zero, where $\n$ denotes the maximal homogeneous ideal of $A$. By \cref{p:Betti-over-P}(2) we have $\beta_{i,j}^P(A)=0$ when $j-i<\ell$. By \Cref{p:Betti-over-P}(3b) we have 
$\reg_P(A)=\ell$, and thus $\beta_{i,j}^P(A)=0$ when $j-i\geq \ell+1$.
Since $\ell\geq 1$, we have $2\ell\ge \ell+1$ and the hypotheses of \cref{p:Golod-I}(I) hold with $c=1$ and $b=\ell$, yielding that $\pi$ is Golod, as desired. 

Since $\pi$ is a Golod homomorphism, the Poincar\'e series of $\kk$ over $A$ is given by
\begin{align}\label{e:levin}
\Po_\kk^A(t,u)&=\frac{\Po_\kk^P(t,u)}{1-t(\Po_A^P(t,u)-1)}
\end{align}
by \cite[Theorem 1.6]{Lev}. 
    Since $\reg_P(A)=\ell$, we apply \cref{l:ab} and substitute \eqref{e:linearhilb} and the equality $\Po_{\kk}^P(t,u)=(1-tu)^{-n}$ into \eqref{e:levin} to obtain:
\begin{align*}
\Po_{\kk}^A(t,u)&=\frac{1}{(1-tu)^n\left(1-\frac{(1-tu)^n-\Hilb_A(-tu)}{(-1)^{\ell+1}t^{\ell-1}(1-tu)^n}\right)}\\
&=\frac{(-t)^{\ell-1}}{\Hilb_A(-tu)+(1-tu)^n((-t)^{\ell-1}-1)},
\end{align*}
yielding \eqref{e: kA over Hilb}.

When $n=4$ or $n=5$, we have $\ell=1$, and thus $\reg_P(A)=1$. Combining this with the fact that $\Ker(\pi)$ has generators in degrees at least $2$, we have that $\Ker(\pi)$ has a $2$-linear resolution. Using \cref{prop:abs-koszul}, it follows that $A$ is absolutely Koszul. 

The ideal $G$ defining $A$ is generated in degrees $2$ and $\ell+1$ by \cref{Hilb R and A}(2b). When $n\ge 6$, we have $\ell\ge 2$, hence $G$ is not quadratic, implying $A$ is not Koszul. 
  
(2):  By \cref{l:powers-zero-map} in the Appendix, the map 
\[
\Tor_i^P(R/\m^2,\kk)\to \Tor_i^P(R/\m,\kk)
\]
induced by the canonical projection map $R/\m^2\to R/\m$ is zero, where $\m$ denotes the maximal homogeneous ideal of $R$.   By \cref{p:Betti-over-P}(1) and \cref{generic reg}, when $n\ge 4$ the hypotheses of \cref{p:Golod-I}(II) hold with $c=1$, $b=\ell+1$, $i_0=1$, and $j_0=2$ and when $2\le n\le 3$  the hypotheses of \cref{p:Golod-I}(I) hold with $b=1$, yielding the natural projection map $P\to R$ is Golod, as desired. 

Since the natural projection map $P\to R$ is a Golod homomorphism, the Poincar\'e series of $\kk$ over $R$ is given by
\begin{align}\label{e:levinR}
\Po_\kk^R(t,u)&=\frac{\Po_\kk^P(t,u)}{1-t(\Po_R^P(t,u)-1)}
\end{align}
by \cite[Theorem 1.6]{Lev}. By the exact sequence \eqref{ses APR} in \cref{APR}, we have
\begin{align}
\Po_R^P(t,u)-1&=tu^2\cdot \Po_A^P(t,u). \label{e:p R over P}
\end{align}

Since $\reg_P(A)=\ell$, we apply \cref{l:ab} and substitute \eqref{e:p R over P}, \eqref{e:linearhilb} and the equality $\Po_{\kk}^P(t,u)=(1-tu)^{-n}$ into \eqref{e:levinR} to obtain:
\begin{align*}
\Po_{\kk}^R(t,u)&=\frac{1}{(1-tu)^n\left(1-t^2u^2-\frac{t^2u^2(1-tu)^n-t^2u^2\Hilb_A(-tu)}{(-1)^{\ell+1}t^{\ell}(1-tu)^n}\right)}\\
&=\frac{(-1)^{\ell+1}t^{\ell}}{(1-tu)^n((-1)^{\ell+1}t^{\ell}-(-1)^{\ell+1}t^{\ell+2} u^2-t^2u^2)+t^2u^2\Hilb_A(-tu)}.
\end{align*}

Next we rewrite the Hilbert series of $A$ in terms of the Hilbert series of $R$ and $P$ using \eqref{hilb APR} in \cref{APR} and use the fact that $\Hilb_P(-tu)=(1-tu)^n$
to get
\begin{align}
    t^2u^2\Hilb_A(-tu)=(1-tu)^n-\Hilb_R(-tu).
\end{align}
Thus we have equalities 
\begin{align*}
\Po_{\kk}^R(t,u)&=\frac{(-1)^{\ell+1}t^{\ell}}{(1-tu)^n((-1)^{\ell+1}t^{\ell}-(-1)^{\ell+1}t^{\ell+2} u^2-t^2u^2+1)-\Hilb_R(-tu)} \\
&=\frac{(-t)^{\ell}}{\Hilb_R(-tu)+(1-tu)^n((-t)^{\ell}-1)(1-t^2u^2)}, 
\end{align*}
yielding \eqref{e: kR over Hilb}.

 In particular, when $n\ge 4$, we have $\ell\ge 1$, and thus $P_{\kk}^R(t,u)\ne(\Hilb_R(-tu))^{-1}$ and hence $R$ is not Koszul.
\end{proof}

\begin{remark}
\label{less}
As indicated in the proof, the conclusions of \cref{t:generic} hold when assuming that $R$ satisfies the properties listed in \cref{d:newgeneric}, that is, in the notation of \cref{not:1}, we have:
\begin{enumerate}[\quad(a)]
\item $f_1, \dots, f_n$ is a regular sequence and $f_{n+1}$ is a maximal rank element on $P$;
\item $\reg_P(A)=\ell$ (and hence $\reg_P(R)=\ell+1$ by \eqref{e:reg}). 
\end{enumerate}
Furthermore, when $n$ is even, only hypothesis (a) is needed since by \cref{reg}(3) we have (a)$\Rightarrow$(b). Thus  all conclusions of \cref{t:generic} hold when $n$ is even and (a) is satisfied.   

On the other hand, when $n$ is odd, \cref{reg} only gives bounds on $\reg_P(A)$, and thus we no longer know that the implication (a)$\Rightarrow$(b) holds. Nonetheless, one can see that the conclusion that $P\to A$ is Golod in \cref{t:generic} also holds only under the assumption (a) when $n=4$ or $n>5$; the proof of this statement follows along the same lines, using \cref{p:Golod-I}. 
Additionally, as noted in \cite[Remark 3.6]{proceedings}, condition (a) is equivalent to formula \eqref{hilbertA} for the Hilbert function of $A$, and in some cases it is known that existence of Golod homomorphisms is a consequence of the form of the Hilbert function alone; see \cite{Rossi-Sega} and \cite{Kustin-Sega-Vraciu}.  Thus it seems reasonable to expect that only (a) is a necessary hypothesis.
\end{remark}

Since we know explicit formulas for the Hilbert series of $R$ and $A$ from \cref{Hilb R and A}, the formulas \eqref{e: kA over Hilb} and \eqref{e: kR over Hilb} for the Poincar\'e series in \cref{t:generic} can be explicitly calculated.  Alternatively, one can compute the Poincar\'e series using the Betti numbers $a_i=\beta_{i,i+\ell}^P(A)$ and $b_i=\beta_{i,i+\ell+1}^P(A)$ for all $i\ge 0$. This way of writing the formulas will come in handy in \cref{sec:rate}. 

\begin{corollary}\label{pseries}
Assume the hypotheses of \cref{t:generic} hold (more precisely, the properties of \cref{d:newgeneric} hold). Then 
\begin{align}
\Po_\kk^A(t,u)&=\frac{1}{(1-tu)^n \bigl(1-tu^\ell\sum_{i\ge 1}a_i(tu)^i)\bigr)} \label{e:k-over-A-general}\\
\Po_\kk^R(t,u) &=\frac{1}{(1-tu)^n \bigl(1-t^2u^2-t^2u^{\ell+2}\sum_{i\ge 1}a_i(tu)^i)\bigr)} \label{e:k-over-R}.
\end{align}
\end{corollary}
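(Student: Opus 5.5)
The plan is to return to the two Golod identities used in the proof of \cref{t:generic}, namely \eqref{e:levin} and \eqref{e:levinR}, and to substitute the series $\Po_A^P(t,u)$ in the form \eqref{e:2strands} of \cref{l:ab}, instead of the Hilbert-series version \eqref{e:linearhilb}. Under the properties of \cref{d:newgeneric} we have $\reg_P(A)=\ell$. Hence $b_i=0$ for all $i\ge 1$, and \eqref{e:linear} gives
\[
\Po_A^P(t,u)-1=u^\ell\sum_{i\ge 1}a_i(tu)^i .
\]
We also use $\Po_\kk^P(t,u)=(1-tu)^{-n}$, since $P$ is a complete intersection of $n$ quadrics.

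For $A$ (the case $n\ge 4$, where $P\to A$ is Golod by \cref{t:generic}(1)), substituting into \eqref{e:levin} gives
\[
\Po_\kk^A(t,u)=\frac{(1-tu)^{-n}}{1-t\,u^\ell\sum_{i\ge1}a_i(tu)^i},
\]
which is \eqref{e:k-over-A-general}.

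For $R$, the map $P\to R$ is Golod by \cref{t:generic}(2). By \eqref{e:p R over P} we have
\[
\Po_R^P(t,u)-1=tu^2\Po_A^P(t,u)=tu^2+tu^{\ell+2}\sum_{i\ge1}a_i(tu)^i .
\]
Substituting this into \eqref{e:levinR}, the denominator becomes
\[
(1-tu)^n\Bigl(1-t^2u^2-t^2u^{\ell+2}\sum_{i\ge1}a_i(tu)^i\Bigr),
\]
which is \eqref{e:k-over-R}.

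There is no real obstacle. The only point to check is the small cases. Formula \eqref{e:k-over-R} is claimed for all $n\ge2$, while \eqref{e:k-over-A-general} relies on Golodness of $P\to A$, which \cref{t:generic} proves only for $n\ge4$. So I would state the $A$-formula under the hypothesis $n\ge 4$ of \cref{t:generic}(1). The $R$-formula needs only that $P\to R$ is Golod and that $\reg_P(A)=\ell$, and both hold for every $n\ge 2$ by \cref{t:generic}(2) and \cref{d:newgeneric}.
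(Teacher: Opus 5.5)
Your proposal is correct and follows the paper's proof. Since $\reg_P(A)=\ell$, you set $b_i=0$ in \eqref{e:2strands} to obtain \eqref{e:linear}, then substitute it, together with \eqref{e:p R over P} and $\Po_\kk^P(t,u)=(1-tu)^{-n}$, into the Golod formulas \eqref{e:levin} and \eqref{e:levinR}. Your caveat that the $A$-formula needs $n\ge 4$ also matches the paper, which takes Golodness of $P\to A$ from \cref{t:generic}(1) and notes in \cref{small n} that this map is not Golod when $n\le 3$.
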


\begin{proof}  
\cref{t:generic} gives that the natural projections $P\to A$ and $P\to R$ are Golod. The fact that $P\to A$ is Golod gives
\begin{align}
 \label{e:k-over-A}
 \Po_\kk^A(t,u)&=\frac{1}{(1-tu)^n \bigl(1-tu^\ell\sum_{i\ge 1}(a_i+b_iu)(tu)^i)\bigr)}.
\end{align}
Indeed, this follows from substituting \eqref{e:2strands} into \eqref{e:levin}. 

Since $\reg_P(A)=\ell$, we have that $b_i=0$ for all $i\ge 0$ by \cref{l:ab}, and hence \eqref{e:k-over-A-general} holds. The equality \eqref{e:k-over-R} follows from substituting \eqref{e:linear} and \eqref{e:p R over P} into \eqref{e:levinR}. 
\end{proof}

\begin{remark}
\label{small n} 
In \cref{t:generic}(1), we require that $n\ge 4$, since the ideal $G$ defining $A$ has generators in degree $1$ otherwise. However, for small values of $n$ the ring $A$ is easy to describe. 

When $n=2$, as noted in \cite[Remark 3.2]{proceedings}  we have
\[I=(x_1,x_2)^2\quad\text{and}\quad R=\kk[x_1,x_2]/(x_1,x_2)^2 \quad\text{and}\quad A=\kk.\]
Thus $A$ is trivially Golod. Note that in this case $R=Q/\m^2$ is also Golod by \cite[Proposition 5.2.4]{Avr}.  

When $n=3$, it follows from \cref{Hilb R and A} that $\fh_A= (1,1)$ and hence $A\cong \kk[x]/(x^2)$ is a hypersurface, which is again Golod by \cite[Proposition 5.2.4]{Avr}. In this case, the almost complete intersection ring $R$ is not a Golod ring; see for example \cite{CVaci}.

 In both cases, note that $A$ is Koszul because it is either a field or defined by a quadratic monomial ideal; see \cite{Froberg-75}.  In particular, since $\ell=0$ in these cases, formula \eqref{e: kA over Hilb} does not hold. Also, by \cite[Proposition 5.8]{Herzog-Iyengar} we see that the projection $P\to A$ is not Golod, since $A$ is Koszul and $\reg_P(A)=\ell\ne 1$. On the other hand, the projection $Q\to A$ is Golod, since $A$ is Golod. In particular, the conclusion that $A$ is absolutely Koszul still holds by \cite[Theorem 5.9]{Herzog-Iyengar}.
\end{remark}

In view of \cite[Proposition 5.18]{AKM}, existence of a Golod homomorphism under the hypotheses of \cref{t:generic} yields the following. 

\begin{corollary}\label{c:rationalPoincare}
Assume the hypotheses of \cref{t:generic} hold. Then there exist polynomials $d_R(t), d_A(t)\in \mathbb Z[t]$ such that for every finitely generated graded $R$-module $M$, respectively $A$-module, we have $d_R(t)\Po_M^R(t)\in \mathbb Z[t]$, respectively  $d_A(t)\Po_M^A(t)\in \mathbb Z[t]$. 
\end{corollary}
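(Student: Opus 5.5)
The plan is to derive the statement from the Golod homomorphisms of \cref{t:generic}, using the standard result \cite[Proposition 5.18]{AKM}. That result says: if $\varphi\colon P\to S$ is a Golod homomorphism and $P$ is a complete intersection, then the Poincar\'e series of all finitely generated $S$-modules are rational with a common denominator. The reason is that over a complete intersection $P$, every finitely generated graded module $N$ has $\Po^P_N(t)$ with denominator dividing a power of $(1-t^2)$. A Golod map transports this to $S$, with an additional factor given by the denominator of $\Po_\kk^S(t)$.

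First I apply \cref{t:generic}, using the properties in \cref{d:newgeneric}. Part (2) gives that $P\to R$ is Golod for all $n\ge 2$. Part (1) gives that $P\to A$ is Golod when $n\ge 4$. Since $P=Q/(f_1,\dots,f_n)$ is a complete intersection of codimension $n$ in $Q$, we have $\Po^P_\kk(t)=(1+t)^n/(1-t^2)^n$. By Gulliksen \cite{Gulliksen74}, every finitely generated graded $P$-module $N$ satisfies $(1-t^2)^n\Po^P_N(t)\in\mathbb Z[t]$.

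Next I use \cite[Proposition 5.18]{AKM}. For a Golod homomorphism $P\to S$ and any finitely generated $S$-module $M$, that result, via the Levin/Avramov comparison of the change-of-rings spectral sequence for $M$ (or by passing to a high syzygy, which is Golod-like over $P$), shows that $\Po_M^S(t)$ is a rational function whose denominator divides $(1-t^2)^n\bigl(1-t(\Po^P_S(t)-1)\bigr)$, up to a polynomial factor depending only on $S$. For $S=R$ the resulting common denominator is
\[
d_R(t)=(1-t^2)^{n}\bigl(1-t(\Po^P_R(t)-1)\bigr)\cdot(\text{cleared denominators of }\Po^P_R).
\]
Here $\Po^P_R(t)$ is rational with denominator $(1-t^2)^{n}$, which is explicit from \eqref{e:p R over P} and \eqref{e:linearhilb} at $u=1$. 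Multiplying through gives a polynomial $d_R(t)\in\mathbb Z[t]$. The case of $A$ is identical, using \eqref{e:linearhilb}.

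The case $n\in\{2,3\}$ for $A$ is not covered by \cref{t:generic}(1), so I handle it separately with \cref{small n}. There $A$ is $\kk$ or $\kk[x]/(x^2)$, and every module has Poincar\'e series with denominator $1$ or $1-t$ respectively, so I take $d_A(t)=1-t$.

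The main obstacle is to confirm that the hypotheses of \cite[Proposition 5.18]{AKM} are exactly met: a surjective Golod map from a complete intersection, in the graded local setting. One also has to make sure the denominator is genuinely a polynomial with integer coefficients after clearing the $(1-t^2)^n$ coming from $\Po^P_A$. I would first check this via the explicit formulas \eqref{e: kA over Hilb} and \eqref{e: kR over Hilb} evaluated at $u=1$. In these formulas the denominators are polynomials in $t$ up to the factor $(-t)^{\ell}$ or $(-t)^{\ell-1}$ in the numerator, which suggests the choices $d_R(t)=\Hilb_R(-t)+(1-t)^n((-t)^\ell-1)(1-t^2)$ times a power of $(1-t^2)$, and similarly for $d_A(t)$.
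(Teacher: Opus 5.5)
Your proposal is correct and is essentially the paper's argument: the paper deduces the corollary directly from \cite[Proposition 5.18]{AKM}, applied to the Golod homomorphisms $P\to R$ and $P\to A$ of \cref{t:generic}, where $P$ is a complete intersection whose modules share the common denominator $(1-t^2)^n$ by Gulliksen. Your explicit denominator bookkeeping (the extra factor of ``cleared denominators'' is redundant but harmless) and your separate treatment of $A$ for $n\le 3$ via \cref{small n} are fine additions that the paper leaves implicit.
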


\begin{remark}
With $R$ as in \cref{t:generic} and $n\ge 4$ (so that $R$ is not Koszul), observe that the formula \eqref{e: kR over Hilb} can be rewritten as: 
\begin{align*}
\frac{1}{\Po_R^\kk(t,u)}&=(-t)^{\ell}\Hilb_R(-tu)+\frac{(1-t^2u^2)^{n+1}}{(1+tu)^n}(1-(-t)^\ell)\\
&=(-t)^{\ell}\Hilb_{R}(-tu)+\frac{(1-(-t)^\ell)}{\Hilb_{R^!}(tu)}\,,
\end{align*}
where $R^!$ is the {\it Koszul dual} of $R$, which is the subalgebra of the Yoneda algebra $\Ext_R(\kk,\kk)$ generated by its elements of degree $1$. Indeed, the fact that the expression $(1+tu)^n(1-t^2u^2)^{-n-1}$ is the Hilbert series of $R^!$  follows from 
\cite[Theorem 2.5, Theorem 2.6]{Lofwall}, in view of the fact that the cubics $x_jf_i$ with $1\le j\le n$ and $1\le i\le n+1$ are linearly independent, which can be deduced from the expression of $\fh_R(3)$ in \eqref{hilbR}.  

Consequently, we see that formula (B.2) in \cite[Theorem B.4]{Roos2} is satisfied with $t=\ell+2$. This formula is proved as a consequence of a property called $\mathcal L_t$, which can be viewed as an extension of the Koszul property; see also \cite{Roos1}. 
Thus, although we do not explore the issue further in this paper, it seems reasonable to expect that $R$ satisfies the property $\mathcal L_{\ell+2}$; this is in fact observed in \cite{Roos1} to hold when $n\le 7$. Similarly, we see that the ring $A$ satisfies formula (B.2) in \cite[Theorem B.4]{Roos2} with $t=\ell+1$, leading to the expectation that $A$ satisfies $\mathcal L_{\ell+1}$. 
\end{remark}

\section{Rate computations}\label{sec:rate}

In this section, let $\kk$ be an infinite field of characteristic zero or positive characteristic greater than $n$ where $n\ge 2$ is a fixed integer and $Q=\kk[x_1,\dots,x_n]$ a standard graded polynomial ring.  We continue the study of a general quotient $R$ of $Q$ defined by $n+1$ quadrics and of the associated Gorenstein ring $A$ (see \Cref{def: generic} and \Cref{d:newgeneric}) by computing the \textit{(Backelin) rate}. The notion of rate, first introduced in \cite{Backelin-rate}, measures the growth of the shifts appearing in the minimal graded free resolution of the residue field. An algebra is Koszul if and only if it has minimal rate equal to $1$, and thus having minimal rate (not equal to $1$) can be regarded as a generalization of being Koszul. 
In the main result \cref{general-rate}, we establish that $R$ does not have minimal rate, but $A$ has minimal rate (not equal to 1 when $n\ge 6$).

To aid in our computations throughout this section, we introduce the notion of rate of a series in the following definition, and we define the rate of a standard graded $\kk$-algebra using the rate of the Poincar\'e series of $\kk$.

\begin{definition}
\label{def:series-rate} 
For  a series $\Po(t,u)=\sum_{i,j\geq 0}c_{ij}t^iu^j$ with real coefficients and integer exponents,  we  define
\begin{equation}
   \tau_i=\sup\{j\,|\, j\ge 0 \text{ and } c_{ij}\not=0\}\qquad \text{ and }\qquad \rate(\Po(t,u))=\sup_{i\geq 2}\frac{\tau_i-1}{i-1}.
\end{equation}
In this definition, we use the convention that the supremum of the empty set is $-\infty$ and $\rate(P(t,u))=-\infty$ if $\tau_i=-\infty$ for all $i\ge 2$.

Let $\kk$ be a field and $S$ a standard graded $\kk$-algebra that is not regular. We define
\begin{equation}
\label{rate of series - ring}
\rate(S)=\rate(\Po_\kk^S(t,u)).
\end{equation}
Write $S=Q/I$ with $Q$ a standard graded polynomial ring over $\kk$ and $I\ne 0$ a homogeneous ideal of $Q$ contained in $Q_{\ge 2}$. Let  $m(I)$ denote the maximum degree of a minimal generator of $I$. It follows from \cite[Theorem 2.3.2]{BH93} that 
\begin{equation*}
\rate(S)\ge m(I)-1.
\end{equation*}
When equality holds, we say $S$ has {\it minimal rate}.
\end{definition}
 
Interesting classes of algebras which have minimal rate include quotients by monomial ideals (see \cite{Backelin-rate}), generic toric rings (see \cite{GPW00}), coordinate rings of generic points in projective space under some conditions (see \cite{CDNR01}), and certain compressed Gorenstein algebras (see \cite{BDDR}).  

Next we prove two technical lemmas which allow us to conclude the rate of $A$ and $R$ from the Poincar\'e series formulas in \eqref{e:k-over-A} and \eqref{e:k-over-R}, respectively, in \cref{pseries}.

\begin{lemma}\label{rateprop}
Let $\Qo(z)\in \mathbb R[[z]]$ be a power series with $\Qo(0)\ne 0$. For all $i\ge 1$ let $a_i$, $b_i$ be real numbers such that  $a_1\ne 0$, $b_1=0$, and let $\ell\ge 2$ be an integer. 
Then, the series 
\begin{align}\label{pseries-hyp}
    \Po(t,u)=\frac{\Qo(tu)}{1-tu^\ell\sum_{i\ge 1}(a_i+b_iu)(tu)^i}\,.
\end{align}
 has $\rate(\Po(t,u))=\ell$.
\end{lemma}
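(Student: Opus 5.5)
The plan is to bound the $u$-degree of every monomial in the expansion of $\Po(t,u)$ by introducing the weight $w(t^mu^j)=j-\ell m$. This weight is additive under multiplication. Set
\[
D(t,u)\coloneq tu^\ell\sum_{i\ge 1}(a_i+b_iu)(tu)^i=\sum_{i\ge 1}\bigl(a_i t^{i+1}u^{i+\ell}+b_i t^{i+1}u^{i+\ell+1}\bigr),
\]
and expand $\Po(t,u)=\Qo(tu)\sum_{k\ge 0}D^k$ formally. This is legitimate because every monomial of $D$ has $t$-degree at least $2$, so each coefficient of $\Po$ is a finite sum. The first step is to check that every monomial $t^mu^j$ of $D$ satisfies $w\le 1-\ell$, with equality only for the monomial $t^2u^{\ell+1}$.

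To verify this, write $m=i+1$. For $m=2$ we have $i=1$, and the hypothesis $b_1=0$ leaves only $a_1t^2u^{\ell+1}$, whose weight is $\ell+1-2\ell=1-\ell$. For $m\ge 3$ the $u$-degree is at most $m+\ell$, so $w\le m+\ell-\ell m$. One computes $(1-\ell)-(m+\ell-\ell m)=(\ell-1)(m-2)-1$, which is positive since $\ell\ge 2$ and $m\ge 3$; hence $w<1-\ell$ in this range.

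Monomials of $\Qo(tu)$ have the form $t^su^s$, with weight $(1-\ell)s\le 0$. Consider a monomial $t^mu^j$ in a product $\Qo(tu)D^k$ with $k\ge 1$. Its weight is at most $k(1-\ell)+(1-\ell)s\le 1-\ell$, that is, $j\le \ell(m-1)+1$, and so $\frac{j-1}{m-1}\le \ell$ whenever $m\ge 2$. For $k=0$ the monomials are $t^mu^m$, with ratio $1\le\ell$. Since $\tau_i$ is the largest $u$-degree occurring with a nonzero coefficient in $t$-degree $i$, possible cancellation among real coefficients only decreases $\tau_i$. Therefore $\rate(\Po(t,u))\le \ell$.

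For the reverse inequality, I will show that $t^2u^{\ell+1}$ occurs in $\Po$ with nonzero coefficient, which gives $\frac{\tau_2-1}{1}\ge \ell$. The expected main obstacle is ruling out cancellation at this monomial, so I list every contribution in $t$-degree $2$. The terms with $k=0$ contribute only $t^2u^2$, and $2\neq \ell+1$ because $\ell\ge 2$. For $k\ge 1$ the $t$-degree of $\Qo(tu)D^k$ is at least $2k+s$, so only $k=1$ with $s=0$ gives $t$-degree $2$. That contribution is $\Qo(0)\bigl(a_1t^2u^{\ell+1}+b_1t^2u^{\ell+2}\bigr)=\Qo(0)a_1t^2u^{\ell+1}$. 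Its coefficient $\Qo(0)a_1$ is nonzero by hypothesis, so $\tau_2=\ell+1$ and $\rate(\Po(t,u))=\ell$.
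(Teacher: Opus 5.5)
Your proof is correct, and it reaches the bound $\tau_i\le \ell(i-1)+1$ by a different mechanism than the paper.

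\textbf{The paper's route.} It writes $\Po(t,u)=\sum_i C_i(u)t^i$ and clears the denominator to get the recursion $C_i(u)=q_iu^i+\sum_{p=2}^{i}C_{i-p}(u)u^{\ell+p-1}(a_{p-1}+b_{p-1}u)$. It then proves the bound by strong induction on $i$, estimating each summand and observing that the worst one is $p=2$. The lower bound comes from $C_2(u)=q_2u^2+q_0a_1u^{\ell+1}$, exactly as in your last paragraph.

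\textbf{Your route.} You expand $\Qo(tu)\sum_k D^k$ and use the additive weight $j-\ell m$. The whole upper bound then reduces to one check on the monomials of $D$, together with $w\le 0$ on $\Qo(tu)$, with no induction.

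\textbf{What it buys.} Your argument is shorter, and it shows why the rate is $\ell$: it is the steepest slope among the monomials of $D$, attained by $t^2u^{\ell+1}$. It also handles the $\Qo$-factor uniformly. In the paper's inductive step, the summand with $p=i$ involves $C_0(u)$, to which the inductive bound does not apply, since $\tau_0=0>1-\ell$. That summand is really controlled by the inequality $\ell+i\le \ell(i-1)+1$ for $i\ge 3$, which is precisely your estimate for monomials of $D$ with $m\ge 3$.

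\textbf{A small inaccuracy.} The quantity $(\ell-1)(m-2)-1$ is only nonnegative, not positive. For $\ell=2$ and $m=3$ the monomial $b_2t^3u^5$ has weight exactly $1-\ell$, so the claim ``equality only for $t^2u^{\ell+1}$'' is false in that case. This is harmless: your upper bound uses only $w\le 1-\ell$, and your lower bound enumerates the $t$-degree-$2$ contributions directly.
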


\begin{proof}  First we set  
\begin{equation*}
    \Po(t,u)=\sum_{i,j\geq 0}c_{ij}t^iu^j\quad\text{and}\quad \Co_i(u)=\sum_{j\geq 0}c_{ij}u^j,\, \text{for all}\ i\geq 0.
\end{equation*} 
 These yield the following expression for the given series:
\begin{align}\label{pseries-beta}
    \Po(t,u)=\sum_{i\ge 0} \Co_i(u)t^i.
\end{align}
According to  \cref{def:series-rate}, set $\tau_i=\deg(\Co_i(u))$ for all $i\geq 0$. 
We proceed by induction on $i$ to show that for all $i\ge 1$ we have
\begin{equation}
\label{e:c}
\tau_i\le \ell(i-1)+1.
\end{equation}
Write $\Qo(z)=\sum_{i\ge 0}q_iz^i$. By hypothesis, substituting \eqref{pseries-hyp}  into \eqref{pseries-beta},  we have the equality: 
\[
\left(1-tu^{\ell}\sum_{i\ge 1}(a_i+b_iu)(tu)^i\right)\cdot \sum_{i\ge 0}\Co_i(u)t^i=\sum_{i\ge 0}q_i(tu)^i.
\]
Equating the coefficients of $t^i$ on both sides, we obtain for all $i\ge 0$:
\begin{align}
\label{e:equate-coeff}
\Co_i(u) = q_iu^i+\sum_{\substack{p+q=i \\ p\ge 2,\, q\ge 0}}\Co_q(u)u^{\ell+p-1}(a_{p-1}+b_{p-1}u) = q_iu^i+\sum_{p=2}^i\Co_{i-p}(u)u^{\ell+p-1}(a_{p-1}+b_{p-1}u).
\end{align}
 
Let $i\ge 1$. We use the fact that $\tau_i=\deg(\Co_i(u))$.  We first note that $C_0(u)=q_0=\Qo(0)\ne 0$. Further, $C_1(u)=q_1u$, hence $\tau_1\le 1$. Thus \eqref{e:c} holds for $i=1$.   
When $i=2$, since $b_1=0$, we have:
$$
\Co_2(u)=q_2u^2+\Co_0(u)u^{\ell+1}a_1.
$$
Since $a_1\ne 0$, $C_0(u)\ne 0$ and $\ell\ge 2$, we conclude $\tau_2=\ell+1$, which satisfies \eqref{e:c}.

Now let $i\ge 3$. Assume that for all $1\le m<i$ we have the inequalities
\begin{equation}\label{indhyp}
\tau_m\le
    \ell(m-1)+1.
\end{equation}
We want to show \eqref{e:c} holds for $\tau_i$.
We have 
\begin{align*}
\deg\left(\Co_{i-p}(u)u^{\ell+p-1}(a_{p-1}+b_{p-1}u)\right) 
\le & \tau_{i-p} + (\ell+p-1)+1\\
\le & (\ell(i-p-1)+1) + (\ell+p-1)+1\\
= & \ell i-p(\ell-1)+1,
\end{align*}
where the second inequality follows from \eqref{indhyp}. Consequently, 
\begin{align*}
\deg\left(\sum_{p=2}^i \Co_{i-p}(u)u^{\ell+p-1}(a_{p-1}+b_{p-1}u) \right)
\le & \max\{\ell i-p(\ell-1)+1\,|\,2\le p\le i\}\le \ell(i-1)+1. 
\end{align*}
where the second inequality follows since $\ell\ge 2$ and therefore the maximum is attained when $p=2$. Notice also that $i-1<\ell(i-1)$, so $i<\ell(i-1)+1$. Therefore by \eqref{e:equate-coeff} we have
\begin{align*}
\tau_i =\deg\left(\Co_i(u)\right)
= \max\left(i, \deg\left(\sum_{p=2}^i \Co_{i-p}(u)u^{\ell+p-1}(a_{p-1}+b_{p-1}u) \right)\right)
\le  \ell(i-1)+1
\end{align*}
which gives the desired inequality \eqref{e:c}.  Using \cref{def:series-rate}, this implies 
\[
\rate(P(t,u))=\sup_{i\ge 2}\frac{\tau_i-1}{i-1}\leq 
\sup_{i\ge 2}{\displaystyle\frac{\ell(i-1)+1-1}{i-1}=\ell.}
\]
Since $\tau_2=\ell+1$, we also have the reverse inequality and we conclude $\rate(P(t,u))=\ell$. 
\end{proof}

\begin{lemma}\label{rateprop2}
Let $\Qo(z)\in \mathbb R[[z]]$ be a power series with $\Qo(0)\ne 0$. For all $i\ge 1$ let $a_i$ be real numbers with $a_1\ne 0$ and let $\ell\ge 1$ be an integer.  Consider the series 
\begin{align}\label{pseries-hyp2}
    \Po(t,u)=\frac{\Qo(tu)}{1-t^2u^2-t^2u^{\ell+2}\sum_{i\ge 1}a_i(tu)^i}.
\end{align}
Then, $\rate(\Po(t,u))=\frac{\ell}{2}+1$.
\end{lemma}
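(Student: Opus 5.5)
Mirroring the proof of \cref{rateprop}, I write $\Po(t,u)=\sum_{i\ge0}\Co_i(u)t^i$ with $\tau_i=\deg\Co_i(u)$ and $\Qo(z)=\sum q_iz^i$. I clear the denominator and compare coefficients of $t^i$. This gives the recursion
\begin{equation*}
\Co_i(u)=q_iu^i+\Co_{i-2}(u)u^2+\sum_{p=3}^{i}a_{p-2}\,\Co_{i-p}(u)\,u^{\ell+p},
\end{equation*}
since $t^2u^{\ell+2}a_k(tu)^k$ contributes $t^{k+2}u^{\ell+k+2}$; with $p=k+2$ this is $t^pu^{\ell+p}$. We have $\Co_0=q_0\ne0$ and $\Co_1=q_1u$. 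The target bound is
\[
\tau_i\le \Bigl(\tfrac{\ell}{2}+1\Bigr)(i-1)+1 \quad\text{for all } i\ge1,
\]
together with one index where equality holds.

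For the upper bound I induct on $i$. The term $q_iu^i$ has degree $i$, which is at most the bound. The term $\Co_{i-2}u^2$ has degree at most $(\tfrac{\ell}2+1)(i-3)+1+2$, which is below the bound because $\tfrac{\ell}{2}+1\ge1$; the case $i=2$ uses $\Co_0$ of degree $0$. The term indexed by $p\ge3$ has degree at most $(\tfrac{\ell}{2}+1)(i-p-1)+1+\ell+p$ when $i-p\ge1$. The difference from the bound is $-(p-2)\tfrac{\ell}{2}+\ell$, which is nonpositive exactly when $p\ge4$. So I must treat $p=3$ and the case $i-p=0$ separately.

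For $p=3$ I use the sharper induction: the degree is at most $\tau_{i-3}+\ell+3$, and with $\tau_{i-3}\le(\tfrac{\ell}{2}+1)(i-4)+1$ this gives $(\tfrac{\ell}2+1)(i-1)+1-\tfrac{\ell}{2}$, which is fine. I recheck the general term with the correct index: $\tau_{i-p}+\ell+p\le(\tfrac{\ell}2+1)(i-p-1)+1+\ell+p=(\tfrac{\ell}{2}+1)(i-1)+1+\ell-(p-1)\tfrac{\ell}{2}-p+p$. This is at most the bound iff $(p-1)\tfrac{\ell}{2}\ge\ell$, that is $p\ge3$. So every $p\ge3$ with $i-p\ge1$ is fine. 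When $i=p$ the term is $a_{i-2}q_0u^{\ell+i}$, and I need $\ell+i\le(\tfrac{\ell}{2}+1)(i-1)+1$, equivalently $\tfrac{\ell}{2}(i-3)\ge0$, which holds for $i\ge3$. Careful bookkeeping of these boundary cases is the main obstacle, including $\tau_m=-\infty$ when some $\Co_m$ vanishes, which only helps.

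For the lower bound I take $i=3$. Here $\Co_3=q_3u^3+q_1u^3+a_1q_0u^{\ell+3}$. Since $\ell\ge1$ and $a_1q_0\ne0$, we get $\tau_3=\ell+3$ and hence $\tfrac{\tau_3-1}{2}=\tfrac{\ell}{2}+1$. The supremum in \cref{def:series-rate} is therefore attained and $\rate(\Po(t,u))=\tfrac{\ell}{2}+1$. As a sanity check, $i=2$ gives $\tau_2\le2$, which is consistent with the bound.
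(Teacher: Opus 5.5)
Your proof is correct and follows essentially the paper's route: the same coefficient recursion, an inductive upper bound on $\tau_i$, and $\tau_3=\ell+3$ for sharpness. The only difference is the invariant: the paper inducts on $\tau_i\le\lfloor i/3\rfloor\ell+i$, which also holds at $i=0$ and so absorbs your separate $i=p$ case, and then uses $\lfloor i/3\rfloor\ell/(i-1)\le\ell/2$ for $i\ge 3$, whereas you induct directly on $\tau_i\le(\tfrac{\ell}{2}+1)(i-1)+1$.

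One arithmetic correction: the bound for the $p$-th term minus your target is exactly $\ell-\tfrac{p\ell}{2}=-(p-2)\tfrac{\ell}{2}$, and both differences you display are too large. So all $p\ge 3$ are handled at once, and the separate $p=3$ paragraph is unnecessary; your conclusion is unaffected because your expressions only overestimate.
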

\begin{proof}
As in the proof of \cref{rateprop}, we set  
\begin{equation*}
    \Po(t,u)=\sum_{i,j\geq 0}c_{ij}t^iu^j\quad\text{and}\quad \Co_i(u)=\sum_{j\geq 0}c_{ij}u^j,\, \text{for all}\ i\ge 0
\end{equation*} 
and $C_i(u)=0$ when $i<0$. In particular, this notation yields the equality $\tau_i=\deg(\Co_i(u))$ for all $i\geq 0$ and the following expression for the given series:
\begin{align}\label{pseries-beta2}
    \Po(t,u)=\sum_{i\ge 0} \Co_i(u)t^i.
\end{align}
Write $\Qo(z)=\sum_{i\ge 0}q_iz^i$. By hypothesis, substituting \eqref{pseries-hyp2} into \eqref{pseries-beta2},  we have the equality: 
\[
\left(1-t^2u^2-t^2u^{\ell+2}\sum_{i\ge 1}a_i(tu)^i\right)\cdot \sum_{i\ge 0}\Co_i(u)t^i=\sum_{i\ge 0}q_i(tu)^i.
\]
Equating the coefficients of $t^i$ on both sides, we obtain the following equalities for each $i\geq 0$:
\begin{align}\label{equate-coeff2}
\Co_i(u)=q_iu^i+\Co_{i-2}(u)u^2+\sum_{\substack{p+m=i \\ p\ge 3,\, m\ge 0}}a_{p-2}\Co_m(u)u^{\ell+p}=q_iu^i+\Co_{i-2}(u)u^2+\sum_{p=3}^ia_{p-2}\Co_{i-p}(u)u^{\ell+p}.
\end{align}

We claim that 
\begin{equation}
\label{eq:tau-claim}
\tau_i\le \left\lfloor\frac{i}{3}\right\rfloor\ell +i,
\end{equation}
which we prove by induction on $i$. As a base case, when $i=0$, equation \eqref{equate-coeff2} implies that $\Co_0(u)=q_0=Q(0)\ne 0$, and thus $\tau_0= 0$. Also, $C_1(u)=q_1u$ and $C_2(u)=q_2u^2+q_0u^2$, which implies that $\tau_1\le 1$ and $\tau_2\le 2$, respectively. Since $C_3(u)=q_3u^3+q_1u^2+a_1q_0u^{\ell+3}$, $a_1\ne 0$, $q_0\ne 0$ and $\ell\ge 1$, we have $\tau_3=\ell+3$. Thus \eqref{eq:tau-claim} holds for $i\le 3$. 

Now let $i\geq 3$ and assume \eqref{eq:tau-claim} holds for all $0\le m<i$. Examining the degrees of the terms of the sum in \eqref{equate-coeff2}, we have for $p\ge 3$: 
\begin{align*}
    \deg\left(a_{p-2}\Co_{i-p}(u)u^{\ell+p}\right)&\le \tau_{i-p}+\ell+p\le \left\lfloor\frac{i-p}{3}\right\rfloor\ell+(i-p)+\ell+p=\left\lfloor\frac{i-p+3}{3}\right\rfloor\ell+i,
\end{align*}
where the induction hypothesis is used to get the second inequality. Hence 
\begin{align*}
    \deg\left(\sum_{p=3}^ia_{p-2}\Co_{i-p}(u)u^{\ell+p}\right)&\le \max\left\{\left\lfloor\frac{i-p+3}{3}\right\rfloor\ell+i\,\Big|\,3\le p\le i\right\}=\left\lfloor\frac{i}{3}\right\rfloor\ell+i,
\end{align*}
where the equality follows from the fact that the maximum is attained when $p=3$. Since $\ell\ge 1$, equation \eqref{equate-coeff2} yields \eqref{eq:tau-claim}, as claimed. Hence when $i\ge 3$, we have  
\begin{align*}
    \frac{\tau_i-1}{i-1}\le \frac{\left\lfloor\frac{i}{3}\right\rfloor\ell}{i-1}+1\le \frac{i}{3(i-1)}\ell+1=\frac{\ell}{3}\left(1+\frac{1}{i-1}\right)+1\le \frac{\ell}{2}+1\,
\end{align*}
and, since $\tau_3=\ell+3$, equalities hold above when $i=3$. Since $\tau_2\le 2$, and hence ${\displaystyle \frac{\tau_2-1}{2-1}}\le 1\le \frac{\ell}{2}+1$, we conclude that $\rate(\Po(t,u))=\frac{\ell}{2}+1$. 
\end{proof}

\begin{theorem}\label{general-rate}
Let $n\geq 2$ be an integer and set $\ell=\bigl\lfloor\frac{n-2}{2}\bigr\rfloor$. Let $\kk$ be an infinite field of characteristic zero or greater than $n$ and $Q=\kk[x_1, \dots, x_n]$. A general quotient $R$ of $Q$ defined by $n+1$ quadrics $f_1, \dots, f_{n+1}$ has the following properties, where $A$ is the Gorenstein ring associated to $R$. 
\begin{enumerate}[\quad$(1)$]
    \item If $n\ge 3$\footnote{If $n=2$, then $A=\kk$ by \cref{small n}, hence $A$ is a regular ring, and rate is not defined.}, then  $\rate(A)=\max\{\ell,1\}$.  In particular, $A$ has minimal rate.
    \item $\rate(R)=\frac{\ell}{2}+1$. 
\end{enumerate} 
\end{theorem}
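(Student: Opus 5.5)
The proof will apply the two technical lemmas, \cref{rateprop} and \cref{rateprop2}, to the Poincaré series formulas in \cref{pseries}. Small values of $n$ are handled separately using the Koszul property.

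\emph{Part (1).} First we dispose of the small cases. When $n=3$, \cref{small n} gives $A\cong\kk[x]/(x^2)$, which is Koszul, so $\rate(A)=1=\max\{\ell,1\}$ since $\ell=0$. When $n=4$ or $n=5$, \cref{t:generic}(1) says $A$ is absolutely Koszul, hence Koszul, so again $\rate(A)=1=\ell$.

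For $n\ge 6$ we have $\ell\ge 2$. We write \eqref{e:k-over-A-general} in the form \eqref{pseries-hyp} with $\Qo(z)=(1-z)^{-n}$ and $b_i=0$. Then $\Qo(0)=1\neq 0$, and \cref{rateprop} gives $\rate(A)=\ell$, provided $a_1\neq 0$.

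Checking $a_1\neq 0$ is the one substantive point. Here $a_1=\beta^P_{1,\ell+1}(A)$ counts the minimal generators of $L=G/J$ of degree $\ell+1$. By \cref{Hilb R and A}(2b), $L$ is generated in degrees at least $\ell+1$. By \cref{Hilb R and A}(3), $\dim_\kk L_{\ell+1}=C_{\ell+2}>0$. Hence $a_1=C_{\ell+2}\neq 0$.

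For minimality of the rate, \cref{Hilb R and A}(2b) shows that $G$ is generated by the quadrics $f_1,\dots,f_n$ together with elements of degree at least $\ell+1$. Since $a_1\ne0$, some minimal generator has degree exactly $\ell+1$. Since $\reg_P(A)=\ell$, the generators of $L$ lie only in degree $\ell+1$. Therefore $m(G)=\ell+1$, and $\rate(A)=\ell=m(G)-1$, so $A$ has minimal rate. In the small cases $G$ contains quadrics, so $m(G)\ge 2$, and rate $1$ is minimal because the general bound $\rate(A)\ge m(G)-1$ forces $m(G)=2$.

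\emph{Part (2).} For $n\ge 4$ we have $\ell\ge 1$. We write \eqref{e:k-over-R} in the form \eqref{pseries-hyp2} with $\Qo(z)=(1-z)^{-n}$ and the same $a_i$, where $a_1=C_{\ell+2}\ne 0$ as above. Then \cref{rateprop2} gives $\rate(R)=\frac{\ell}{2}+1$.

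For $n=2,3$ we have $\ell=0$, and \cref{t:generic}(2) says $R$ is absolutely Koszul, hence Koszul, so $\rate(R)=1=\frac{\ell}{2}+1$. Here $R$ is not regular, so the rate is defined.

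\emph{Main obstacle.} The only delicate step is verifying the nonvanishing $a_1\neq 0$. This does not come from the lemmas; it comes from the Hilbert function computation in \cref{Hilb R and A}(3). Beyond that, we must keep track of which hypotheses ($\ell\ge 2$ versus $\ell\ge 1$) each lemma needs, which is why the small cases are split off and handled by the Koszul property.
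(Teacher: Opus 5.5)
Your proposal is correct and follows essentially the same route as the paper. You dispose of the small cases via the Koszul property. You then apply \cref{rateprop} (for $\ell\ge 2$) and \cref{rateprop2} (for $\ell\ge 1$) to the formulas in \cref{pseries}, with $\Qo(z)=(1-z)^{-n}$, $b_i=0$ and $a_1=C_{\ell+2}\neq 0$ coming from \cref{Hilb R and A}(2b),(3). Minimality of the rate of $A$ follows from $m(G)=\ell+1$, which uses $\reg_P(A)=\ell$.
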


\begin{proof}
We may assume that $R$ satisfies the properties listed in \cref{d:newgeneric}. In particular, $P$ is a complete intersection, $f_{n+1}$ is a maximal rank element on $P$ and $\reg_P(A)=\ell$.

Recall that by \cref{small n}, $R$ is Koszul when $n=2$ or $n=3$, and $A$ is Koszul when $n=3$, hence they have rate equal to 1.  Since $\ell=0$ when $n\le 3$, both statements hold in this case. 

We assume now $n\ge 4$. By \cref{pseries}, the Poincar\'e series of $\kk$ over $A$ is given by \eqref{e:k-over-A-general}, which matches \eqref{pseries-hyp}, and the Poincar\'e series of $\kk$ over $R$ is given by \eqref{e:k-over-R}, which matches \eqref{pseries-hyp2}.

Below, we will apply \cref{rateprop} and \cref{rateprop2} with $Q(z)=(1-z)^{-n}$,  $a_i=\beta_{i,i+\ell}^P(A)$ and $b_i=0$ for all $i$. To verify the hypotheses of these lemmas, note that $Q(0)\ne 0$. Also, $a_1$ is equal to the minimal number of generators in degree $\ell+1$ of the ideal $G/J$ (with notation as in \cref{not:1}). Using \cref{Hilb R and A}(3), we see that $a_1=C_{\ell+2}\ne 0$. 

(1): If $\ell=1$, then $n=4$ or $n=5$ and $A$ is Koszul by \cref{t:generic}(1); hence $\rate(A)=1$.  If $\ell\ge 2$, then the fact that $\rate(A)=\ell$ follows from equation \eqref{e:k-over-A-general} in \cref{pseries}, \eqref{rate of series - ring}, and \cref{rateprop}. By \cref{Hilb R and A}(3), $G$ has minimal generators in degree $\ell+1$, and since $\reg_P(A)=\ell$, we have $m(G)\le\ell+1$. Thus we have equality $m(G)=\ell+1$ and by \cref{def:series-rate}, $A$ has minimal rate.

(2): Since $n\ge 4$ we have $\ell\ge 1$, and the result follows directly from equation \eqref{e:k-over-R} in \cref{pseries}, \eqref{rate of series - ring}, and \cref{rateprop2}.
\end{proof}

\begin{remark}
Assume the notation and hypotheses in 
\cref{not:1} and let $f_{n+1}$ be a quadratic maximal rank element of $P$. If $n$ is even, then the conclusions of \cref{general-rate} hold, in view of \cref{less}.
\end{remark}

\section{The Yoneda Ext algebra of $A$}
  
In the previous sections, we established that the associated Gorenstein ring $A$ (see \Cref{RPA}) of a general quotient $R$ of $Q$ defined by $n+1$ quadrics satisfies  generalizations of the Koszul property involving Poincar\'e series and rate. In this section we conclude this exploration of generalizations of the Koszul property by studying the Yoneda algebra $\Ext_A(\kk,\kk)$.

It is known that a standard graded $\kk$-algebra $S$ is Koszul if and only if the Yoneda algebra $\Ext_S(\kk,\kk)$ is generated as a $\kk$-algebra by its elements of degree $1$; see \cite[Theorem 1.2]{Lofwall}. In \Cref{t:generic}(1) we proved that the Gorenstein ring $A$ is absolutely Koszul for $n=4$ and $n=5$ but it is not Koszul for $n\geq 6$. In the main result \cref{yoneda} of this section, we show that $\Ext_A(\kk,\kk)$ is generated as a $\kk$-algebra in degrees $1$ and $2$.  Such an algebra is called a $\mathcal K_2$-algebra in \cite{Cassidy-Shelton}. For another class of such algebras see \cite{HS}. 

In general, the Yoneda algebra of $S$ need not be finitely generated as a $\kk$-algebra; see \cite{Lev-Ext}  for a list of known cases in which $\Ext_S(\kk,\kk)$ is finitely generated. In particular, $\Ext_S(\kk,\kk)$ is finitely generated whenever there is a Golod homomorphism from a graded complete intersection onto $S$, and thus \cref{t:generic}(2) yields that the Yoneda algebra of a general quotient $R$ of $Q$ defined by $n+1$ quadrics is also finitely generated. 
 
 Below we collect some known statements that are used in the proof of the main result. While most are originally stated in the context of local rings, they extend as usual to the graded setting. Note that no restrictions on the characteristic of $\kk$ are required in these.

\begin{chunk}\label{vanishing}
Let $\kk$ be a field, $Q=\kk[x_1, \dots, x_n]$ and $S=Q/\fa$, with $\fa$ an ideal generated by homogeneous polynomials of degrees at least $2$. We let $\m$ denote the maximal homogeneous ideal of $S$. If $M$ is a graded $S$-module, we denote by $\nu_S(M)$ the map
\[
\nu_S(M)\colon \Tor^S(\m M, \kk)\to \Tor^S(M, \kk)
\]
induced by the inclusion $\m M\subseteq M$. The vanishing of $\nu_S(M)$ is equivalent to the vanishing of its dual, which is the map $\Ext_S(M, \kk)\to \Ext_S(\m M, \kk)$ induced by the same inclusion. When $M$ is an ideal of $S$, vanishing of $\nu_S(M)$  is also equivalent to the fact that the (dual) maps
\[
\Tor_p^S(S/M,\kk)\to \Tor_p^S(S/\m M,\kk)\qquad\text{and}\qquad  \Ext^p_S(S/M,\kk)\to \Ext^p_S(S/\m M,\kk) 
\]
induced by the canonical projection  $S/\m M\to S/ M$ are zero for all $p>0$. 
\end{chunk}
\begin{chunk}\cite[Theorem 3.2, Proposition 7.5]{Sega-powers}\label{lianathm}
Adopt the notation and hypotheses in \ref{vanishing}.  Then the following assertion holds for the $\kk$-algebra $S$:
\begin{equation}
\label{Koszul-ring} 
    \text{$S$ is Koszul} \iff \nu_S(\m^i)=0\ \text{for all } i \geq 0.
\end{equation}
Moreover, if $S$ is Koszul, then for all finitely generated graded modules $M$ we have:
\begin{equation}\label{reg-powers}
    \reg_S(M)=\inf\{s\ge 0\mid  \nu_S(\m^iM)=0 \text{ for all $i\ge s$}\}\,.
\end{equation}
\end{chunk}

\begin{remark}\label{remark:vanishing}
Adopt the notation and hypotheses in \ref{vanishing}. Let $J$ be a homogeneous ideal generated by a $Q$-regular sequence that is part of a minimal generating set of $\fa$. Set $P=Q/J$, and note that one can regard $\m$ as a $P$-module.  Then for any $i\ge 1$ we have
\begin{equation}\label{ci-to-S}
    \nu_P(\m^i)=0\implies \nu_S(\m^i)=0.
\end{equation}
This statement is a consequence of \cite[Lemma 2]{Lev-Poincare}. This is  explained in \cite[1.12.1]{Maleki-Sega} in the case when $i=1$, and we note that the same argument works for any $i\ge 1$.

Furthermore, the proof of \cite[Theorem 3.15]{Lev} shows that the following assertion holds for the canonical homomorphism $\phi\,\colon S\to S/\m^i$, where $i\ge 1$: 
\begin{equation}\label{powers-Golod}
\nu_S(\m^{i-1})=0\implies\phi\quad\text{is a Golod homomorphism.}
\end{equation}
\end{remark}

\begin{lemma}
\label{combine} 
Adopt the notation and hypotheses in \ref{vanishing}. If $\nu_S(\m^{i-1})=0=\nu_S(\m^i)$ for some $i\ge 2$, then $\Ext_{S/\m^i}(\kk,\kk)$ is generated in degrees $1$ and $2$ if and only if $\Ext_{S}(\kk,\kk)$ is generated in degrees $1$ and $2$. 
\end{lemma}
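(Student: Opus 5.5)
Set $T=S/\m^i$ and let $\phi\colon S\to T$ be the canonical projection, with $i\ge 2$. Since $\nu_S(\m^{i-1})=0$, \eqref{powers-Golod} in \cref{remark:vanishing} shows that $\phi$ is a Golod homomorphism. For such homomorphisms I would use the standard structure of the induced map of Yoneda algebras, $\phi^*\colon\Ext_T(\kk,\kk)\to\Ext_S(\kk,\kk)$. By results of Levin and Avramov, $\phi^*$ is surjective, and its kernel is generated as a (two-sided) ideal by the dual of $s\Tor_{+}^S(T,\kk)$. More precisely, $\Ext_T(\kk,\kk)$ is isomorphic, as an algebra, to the free product (coproduct) of $\Ext_S(\kk,\kk)$ with the tensor algebra on the graded vector space $V=(s\Tor_{+}^S(T,\kk))^\vee$. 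This vector space sits in homological degrees $p+1$, where $p$ is the homological degree of $\Tor_p^S(T,\kk)$ for $p\ge 1$.

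The plan splits into two steps.

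First, I would verify the free product description in our graded setting, using Golodness. I would cite the known characterization: if $\phi$ is Golod, then $\Ext_T(\kk,\kk)\cong \Ext_S(\kk,\kk)\sqcup \mathsf T(V)$. This follows from Avramov's description via the fibre $\Tor^S(T,\kk)$ having trivial Massey operations, equivalently the Levin formula $\Po^T_\kk=\Po^S_\kk/(1-t(\Po_T^S-1))$ holding. Given this, the algebra $\Ext_T(\kk,\kk)$ is generated in degrees $1,2$ if and only if two things hold: $\Ext_S(\kk,\kk)$ is generated in degrees $1,2$, and the new generators from $V$ can be taken in degrees $\le 2$. Passing to a free product neither destroys nor creates generation of a factor.

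Second, I would deal with $V$, and this is where $\nu_S(\m^i)=0$ enters. The generators from $V$ in degree $p+1$ correspond to $\Tor_p^S(S/\m^i,\kk)$. Those with $p=1$, coming from $\m^i/\m^{i+1}$, live in degree $2$, which is harmless. For $p\ge 2$ I must show that the corresponding classes are decomposable in $\Ext_T(\kk,\kk)$, or rather that a different choice of generating set avoids them. Here I would use \ref{vanishing}: $\nu_S(\m^i)=0$ means the maps $\Ext^p_S(S/\m^i,\kk)\to\Ext^p_S(S/\m^{i+1},\kk)$ vanish for $p>0$. Combined with the exact sequence $0\to\m^i/\m^{i+1}\to S/\m^{i+1}\to S/\m^i\to0$, where $\m^i/\m^{i+1}$ is a direct sum of copies of $\kk$, this shows that $\Ext_S^p(S/\m^i,\kk)$ injects via the connecting map into $\Ext^{p-1}_S(\m^i/\m^{i+1},\kk)$, a sum of copies of $\Ext_S^{p-1}(\kk,\kk)$. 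Translating through the change-of-rings spectral sequence, the corresponding element of $\Ext_T(\kk,\kk)$ is a Yoneda product of a degree-$2$ class (from $\m^i/\m^{i+1}$, i.e.\ $p=1$) with an element of $\Ext_S^{p-1}(\kk,\kk)$ lifted to $T$ via $\phi^*$'s section. Hence it is decomposable. This yields the equivalence in both directions, because the degree-$1$ and degree-$2$ parts of $\phi^*$ relate generators of the two algebras.

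The main obstacle is this second step: making precise the identification of the new generators with products. One needs the compatibility between the connecting homomorphism and Yoneda multiplication in $\Ext_T(\kk,\kk)$, and I would expect to handle it by comparing minimal resolutions directly, in the style of \cite{Sega-powers}.
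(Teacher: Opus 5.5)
Your first step matches the paper: $\nu_S(\m^{i-1})=0$ makes $\phi\colon S\to T=S/\m^i$ Golod by \eqref{powers-Golod}. The paper then only observes that Golodness of $\phi$ together with $\nu_S(\m^i)=0$ are the hypotheses of \cite[Theorem 5.8]{Lev-Ext}. That theorem gives isomorphisms between the indecomposables of $\Ext_S(\kk,\kk)$ and of $\Ext_T(\kk,\kk)$ in every degree $j>2$, and the lemma follows at once.

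You are in effect trying to reprove that theorem, and two things go wrong. The first is the ``more precisely'' claim in Step 1, which is false: Golodness does not make $\Ext_T(\kk,\kk)$ a free product $\Ext_S(\kk,\kk)\sqcup\mathsf T(V)$. Levin's formula $1/\Po^T_\kk(t)=\bigl(1-t(\Po^S_T(t)-1)\bigr)/\Po^S_\kk(t)$ is the Hilbert series of $\Ext_S(\kk,\kk)\otimes\mathsf T(V)$. A free product would instead have reciprocal Hilbert series $1/\Po^S_\kk(t)-t(\Po^S_T(t)-1)$. These differ unless $S=\kk$ or $T=S$. For example, take $S=\kk[x]$ and $T=\kk[x]/(x^2)$, where $\nu_S(\m^j)=0$ for all $j$. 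The free product has independent elements $\xi\eta,\eta\xi$ in degree $3$, but $\Ext_T(\kk,\kk)=\kk[\xi]$ is one-dimensional in each degree.

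What is true, and all your reduction uses, is your first sentence: $\phi^*$ is surjective and its kernel is the two-sided ideal generated by a copy of $V$. Then the induced map on indecomposables is surjective, with kernel spanned by the image of $V$. So ``$\Ext_T$ generated in degrees $\le 2$ $\Rightarrow$ $\Ext_S$ generated in degrees $\le 2$'' is automatic. The converse reduces to showing that the classes of $V$ in degrees $\ge 3$ are decomposable in $\Ext_T(\kk,\kk)$.

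That decomposability is the whole content of Levin's theorem, and Step 2 does not establish it. First, the exact sequence is read in the wrong direction. In Ext, vanishing of $\Ext^p_S(S/\m^i,\kk)\to\Ext^p_S(S/\m^{i+1},\kk)$ makes the connecting map $\Ext^{p-1}_S(\m^i/\m^{i+1},\kk)\to\Ext^p_S(S/\m^i,\kk)$ surjective; the injectivity you state is the Tor version.

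More seriously, even with the right direction, this is a statement about $\Ext_S$ of the module $S/\m^i$. To get decomposability in $\Ext_T(\kk,\kk)$ you must pin down how a class of $V$ in degree $p+1$ is realized inside $\Ext_T(\kk,\kk)$; it comes from the trivial-Massey-operation construction of the $T$-resolution of $\kk$. You must then show that this realization turns the connecting map into Yoneda multiplication by degree-$2$ classes in $\Ext_T(\kk,\kk)$. Lifting along a (linear) section of $\phi^*$ is not multiplicative, and ``translating through the change-of-rings spectral sequence'' does not supply this compatibility. You rightly flag this as the main obstacle, but it is the entire proof, so the argument as written is incomplete. The natural fix is to invoke \cite[Theorem 5.8]{Lev-Ext}, as the paper does.
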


\begin{proof}
Since $\nu_S(\m^{i-1})=0$, the canonical homomorphism $S\to S/\m^i$ is Golod by \eqref{powers-Golod}.  This, together with the fact that $\nu_S(\m^i)=0$, shows that the hypotheses of \cite[Theorem 5.8]{Lev-Ext} are satisfied, and therefore we have isomorphisms of $\kk$-vector spaces
\[
\frac{\Ext_S^{j}(\kk,\kk)}{\sum_{\substack{a+b=j\\ a,b\ge 1}}\Ext_S^{a}(\kk,\kk)\Ext_S^b(\kk,\kk)}\cong \frac{\Ext_{S/\m^i}^{j}(\kk,\kk)}{\sum_{\substack{a+b=j\\ a,b\ge 1}}\Ext_{S/\m^i}^{a}(\kk,\kk)\Ext_{S/\m^i}^b(\kk,\kk)}
\quad\text{for all $j>2$.}
\]
Consequently, $\Ext_S(\kk,\kk)$ is generated in degrees $1$ and $2$ if and only if $\Ext_{S/\m^i}(\kk,\kk)$ is generated in degrees $1$ and $2$. 
\end{proof}

\begin{proposition}\label{p:ExtS}
Let $Q=\kk[x_1, \dots, x_n]$ with $\kk$ a field.
Let  $J$ and $\fa$  be homogeneous ideals of $Q$ such that $J$ is generated by a quadratic regular sequence and $J\subseteq \fa \subseteq J+(x_1, \dots, x_n)^{\ell+1}$ for some $\ell\ge 2$. Set $P=Q/J$ and $S=Q/\fa$.  If $\reg_P(S)=\ell$, then the Yoneda algebra $\Ext_S(\kk,\kk)$ is generated as a $\kk$-algebra by its elements of degree $1$ and $2$. 
\end{proposition}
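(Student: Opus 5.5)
We plan to reduce to the truncation $S/\m^{\ell+1}$ via \cref{combine} with $i=\ell+1$. Let $\m$ denote the maximal homogeneous ideal of $S$. Since $\fa\subseteq J+(x_1,\dots,x_n)^{\ell+1}$, we have $S/\m^{\ell+1}\cong Q/(J+(x_1,\dots,x_n)^{\ell+1})=P/\m_P^{\ell+1}$. Here $\m_P$ is the maximal ideal of $P$.

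The first step is to verify $\nu_S(\m^{\ell})=0=\nu_S(\m^{\ell+1})$. By \eqref{ci-to-S} in \cref{remark:vanishing}, applied with $J$ the quadratic regular sequence, it suffices to show $\nu_P(\m^{i})=0$ for $i=\ell,\ell+1$, where $\m^i$ is regarded as a $P$-module. To make \cref{remark:vanishing} applicable, we first discard redundant generators: we may assume $J$ is part of a minimal generating set of $\fa$, because $\fa\cap Q_2=J_2$ whenever $\ell\ge2$.

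The $P$-module structure of $\m^i$ is controlled by the hypothesis $\reg_P(S)=\ell$. The sequence $0\to \m^i\to S\to S/\m^i\to 0$, together with the fact that $S_j=(P/(\fa/J))_j=P_j$ for $j\le \ell$, suggests that $\m^i$ has a linear resolution over $P$ for $i\ge \ell$. More precisely, I would argue $\reg_P(\m^i)\le i$ for $i\ge\ell$. Indeed, $\m^i$ is generated in degree $i$, and the bound should follow from $\reg_P(S)\le\ell$ together with the fact that $\m^i=S_{\ge i}$. For $i\ge \reg_P(S)$, truncation $S_{\ge i}$ has an $i$-linear resolution over the Koszul algebra $P$ (by the standard truncation result over Koszul algebras, cf.\ \cite{AP}). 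Then \eqref{reg-powers} in \cref{lianathm}, applied over the Koszul ring $P$ with $M=\m^\ell$ (so that $\reg_P(M)=\ell$ forces $s\le$ small), gives $\nu_P(\m^{j}M)=0$ for large $j$. Rather than that route, I plan to use \eqref{reg-powers} directly: an $i$-linear module $M=\m^i$ has $\reg_P(M)=0$ after shifting, so $\nu_P(\m^{i+j})=\nu_P(\m^jM)=0$ for all $j\ge0$. This is needed at $i=\ell$.

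The second step is to show that $\Ext_{P/\m_P^{\ell+1}}(\kk,\kk)$ is generated in degrees $1$ and $2$. Since $P$ is Koszul, \eqref{Koszul-ring} gives $\nu_P(\m_P^j)=0$ for all $j$. We can then apply \cref{combine} again with $S$ replaced by $P$ and $i=\ell+1$. This reduces the claim to $\Ext_P(\kk,\kk)$ being generated in degrees $1$ and $2$, which holds because $P$ is Koszul, so its Yoneda algebra is even generated in degree $1$. Chaining the two applications of \cref{combine} through the common ring $S/\m^{\ell+1}=P/\m_P^{\ell+1}$ then yields the proposition.

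I expect the main obstacle to be the first step, namely transferring vanishing of $\nu_P$ on powers of the maximal ideal of $S$ (as $P$-modules) from the hypothesis $\reg_P(S)=\ell$. In particular, one must justify that $\m^\ell$ has a linear $P$-resolution. If the truncation argument fails, I would fall back on the long exact sequence in $\Tor^P$ for $0\to\m^\ell\to S\to S/\m^\ell\to0$. Since $S/\m^\ell=P/\m_P^\ell$ has an $(\ell-1)$-linear syzygy part over $P$, this sequence would bound $\reg_P(\m^\ell)$ by $\max\{\reg_P(S),\ell\}=\ell$.
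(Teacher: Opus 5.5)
Your proof is correct and follows the paper's route: both pass through the common truncation $S/\m^{\ell+1}\cong P/\n^{\ell+1}$, apply \cref{combine} on each side using that $P$ is Koszul, and transfer $\nu_P(\m^i)=0$ to $\nu_S(\m^i)=0$ via \eqref{ci-to-S}. The one difference is your detour through the linearity of $\m^\ell$ over $P$, which is valid but unnecessary: the paper applies \eqref{reg-powers} directly to $M=S$ (generated in degree $0$, with $\reg_P(S)=\ell$) and gets $\nu_P(\m^i)=0$ for all $i\ge\ell$ at once.
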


\begin{proof}
Let $\n$ denote the maximal ideal of $P$. 
Note that since $J$ is generated by a quadratic regular sequence, we have that $P$ is Koszul (this follows from \cite{tate}; see for example \cite[Remark 6 (3)]{Koszul-survey}). Since $P$ is Koszul, $\Ext_P(\kk,\kk)$ is generated in degree $1$ and \ref{Koszul-ring} implies $\nu_P(\n^i)=0$ for all $i\ge 0$. Hence \cref{combine} gives that $\Ext_{P/\n^{\ell+1}}(\kk,\kk)$ is generated in degrees $1$ and $2$. 

Let $\m$ denote the maximal homogeneous ideal of $S$. 
The hypotheses on $J$ and $\fa$ imply that $P/\n^{\ell+1}\cong S/\m^{\ell+1}$.  Therefore, $\Ext_{S/\m^{\ell+1}}(\kk,\kk)$ is generated in degrees $1$ and $2$.

Since $\reg_P(S)=\ell$ and $P$ is Koszul, it follows from \ref{reg-powers} that $\nu_P(\m^i)=0$ for all $i\ge \ell$. The hypotheses on $J$ and $\fa$  imply that the regular sequence generating $J$ is part of a minimal generating set of $\fa$. Then \ref{ci-to-S} in \cref{remark:vanishing} implies $\nu_S(\m^i)=0$ for all $i\ge \ell$. Using again \cref{combine}, we conclude that  $\Ext_S(\kk,\kk)$ is generated in degrees $1$ and $2$.
\end{proof}

Applying \cref{p:ExtS} in our general setting, we obtain the following.  

\begin{theorem}\label{yoneda}
Let $n\geq 2$ be an integer, $\kk$  an infinite field of characteristic zero or greater than $n$, and $Q=\kk[x_1, \dots, x_n]$.  
A general quotient $R$ of $Q$ defined by $n+1$ quadrics 
$f_1, \dots, f_{n+1}$ 
has the property that $\Ext_A(\kk,\kk)$ is generated as a $\kk$-algebra by its elements of degree $1$ and $2$, where  $A$ is the Gorenstein ring associated to $R$. 
\end{theorem}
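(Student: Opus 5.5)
The plan is to apply \cref{p:ExtS} with $\fa=G$ and $S=A$, and to treat separately the small values of $n$ where $\ell<2$. As in the earlier proofs, we may assume $R$ satisfies the properties in \cref{d:newgeneric}. Then $J=(f_1,\dots,f_n)$ is generated by a quadratic regular sequence, $P=Q/J$, and $\reg_P(A)=\ell$ by \cref{generic reg}.

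\textbf{Small $n$.} Suppose first that $n\le 5$, so $\ell\le 1$. For $n=2,3$, \cref{small n} shows $A$ is Koszul: it is either $\kk$ or $\kk[x]/(x^2)$. For $n=4,5$, \cref{t:generic}(1) shows $A$ is absolutely Koszul, hence Koszul. In all these cases $\Ext_A(\kk,\kk)$ is generated in degree $1$ by \cite[Theorem 1.2]{Lofwall}, which is stronger than the claim.

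\textbf{Large $n$.} Now let $n\ge 6$, so $\ell\ge 2$. We check the hypotheses of \cref{p:ExtS} one at a time.
\begin{enumerate}
\item The inclusion $J\subseteq G$ holds by definition of $G=J:I$.
\item For $G\subseteq J+(x_1,\dots,x_n)^{\ell+1}$, use \cref{Hilb R and A}(2b): the $P$-ideal $G/J$ is generated in degrees at least $\ell+1$. So every minimal generator of $G$ either lies in $J$ or has degree at least $\ell+1$.
\item The condition $\reg_P(A)=\ell$ is exactly the regularity statement in \cref{generic reg}.
\end{enumerate}
With these verified, \cref{p:ExtS} gives that $\Ext_A(\kk,\kk)$ is generated in degrees $1$ and $2$.

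The only point needing care is the containment in step 2. We must make sure $G$ contributes no new generators in degrees $2,\dots,\ell$ beyond $J$. This follows from the Hilbert function equality $\fh_A(i)=\binom{n}{i}=\fh_P(i)$ for $i\le \ell$, from \eqref{hilbertA}, which gives $(G/J)_i=0$ in those degrees. Everything else is a direct citation, so the main content was already done in \cref{generic reg} and \cref{p:ExtS}.
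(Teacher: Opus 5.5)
Your proposal is correct and is essentially the paper's proof. The paper also handles $2\le n\le 5$ by the Koszulness of $A$ (via \cref{small n} and \cref{t:generic}(1)). For $n\ge 6$ it likewise applies \cref{p:ExtS} with $S=A$ and $\fa=G$, taking the containment $G\subseteq J+(x_1,\dots,x_n)^{\ell+1}$ from \cref{Hilb R and A}(2b) and the equality $\reg_P(A)=\ell$ from \cref{generic reg}.
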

\begin{proof}
We may assume that $R$ satisfies the properties listed in \cref{d:newgeneric}. In particular, $P$ is a complete intersection, $f_{n+1}$ is a maximal rank element on $P$ and $\reg_P(A)=\ell$. 
    
If $2\leq n\leq 5$, then $A$ is Koszul by \cref{small n} and \cref{t:generic}(1), so $\Ext_A(\kk,\kk)$ is generated as an algebra by its elements of degree $1$.

Assume $n\geq 6$, so $\ell =\bigl\lfloor\frac{n-2}{2}\bigr\rfloor\geq 2$. 
We observe then that the hypotheses of \cref{p:ExtS} hold with $J=(f_1, \dots, f_n)$, $P=Q/J$, $S=A$ and $\fa=J:(f_{n+1})$.  Indeed, the fact that $\fa\subseteq J+(x_1, \dots, x_n)^{\ell+1}$ follows from \cref{Hilb R and A}(2b).
\end{proof}

\section{Bounds on Betti numbers over $Q$ when $n$ is odd}\label{s:bounds}

In this section, let $\kk$ be an infinite field of characteristic zero or positive characteristic greater than $n$ where $n\ge 2$ is a fixed integer and let $Q=\kk[x_1,\dots,x_n]$ be a standard graded polynomial ring. When $n$ is even, we have precise formulas \eqref{Betti RQ} and \eqref{Betti AQ} for the graded Betti numbers of the rings $R$ and $A$, where $R$ is a general quotient of $Q$ defined by $n+1$ quadrics (as in \Cref{def: generic} and \Cref{d:newgeneric}) and $A$ is the Gorenstein ring associated to $R$. Formulas for Betti numbers of $R$ are not known when $n$ is odd. These Betti numbers are conjectured in \cite[Conjecture 5.8]{MM} to be the minimal ones possible that are consistent with the Hilbert function of $R$. Furthermore, upper bounds for these Betti numbers are given in \cite[Theorem 5.4]{MM}. In \cref{p:overQ}, we assume that $n$ is odd and determine new upper bounds on the graded Betti numbers of $R$ and $A$ in this case, and then we compare them with the ones mentioned above.  In particular, we show that $R$ is level and describe explicitly the degrees of the generators of the defining ideal of $A$.  These results yield exact values for two previously unknown Betti numbers, consistent with \cite[Conjecture 5.8]{MM}.

Recall the rings $\widetilde R$, $\widetilde P$, $\widetilde A$ and $\wtP_{\,[1]}$ from \cref{not:3}: 
\[
\begin{tabular}{ll}
$\widetilde R=Q/(x_1^2, \dots, x_n^2, \widetilde{f}_{n+1})$, & \quad
$\widetilde P=Q/(x_1^2, \dots, x_n^2)$,\\
$\widetilde A=Q/((x_1^2, \dots, x_{n}^2)\colon \widetilde{f}_{n+1})$, & \quad
$\wtP_{\,[1]}=Q/(x_1^2)$.
\end{tabular} 
\]
where $\widetilde{f}_{n+1}=(x_1+\dots+x_n)^2$. 
Also, we recall that $\ell\coloneq \left\lfloor\frac{n-2}{2}\right\rfloor.$

\begin{lemma}
\label{generic Betti P1}    
If $n\geq 3$ is odd, then a general quotient $R$ of $Q$ defined by $n+1$ quadrics $f_1, \dots, f_{n+1}$ satisfies the following properties for all $i,j\geq 0$, where $A$ is the Gorenstein ring associated to $R$ and $\Pu{1}=Q/(f_1)$:
\begin{enumerate}[\quad$(1)$]
\item $\beta_{i,j}^{\Pu{1}}(R)=\beta_{i,j}^{\wtP_{\,[1]}}(\widetilde R)$.\label{eq:R}
\item $\beta_{i,j}^{\Pu{1}}(A)=\beta_{i,j}^{\wtP_{\,[1]}}(\widetilde A)$. \label{eq:A}
\end{enumerate}
\end{lemma}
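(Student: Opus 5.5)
We would prove both equalities by lower semicontinuity of graded Betti numbers along the parameter space, pinned down from the other side by \cref{Betti-strand2}. Two facts drive this. First, a general $R$ has the same Hilbert function as $\wtR$, and likewise $A$ and $\wtA$, by \cref{rem: general - hilb}. Second, we know the Betti numbers of $\wtR$ over $\wtP_{\,[1]}$ in every strand except the top one. Indeed, by \cref{p:reg ex} and \cref{l: A and R over Pu}(2), the nonzero Betti numbers of $\wtR$ over $\wtP_{\,[1]}$ lie in the strands $j-i\le \ell$ or $j-i=\ell+1$. In the strands $j-i\le\ell$ they equal $\binom{n}{i}$ when $j=2i$ and vanish otherwise.

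The same description holds for a general $R$ over $\Pu{1}$. By \cref{generic reg}, $\reg_{\Pu{1}}(R)=\ell+1$. By \cref{l: A and R over Pu}(2), which applies since $f_{n+1}$ is a maximal rank element on $P$, the Betti numbers in the strands $j-i\le \ell$ agree with those of $\wtR$. Moreover $\fh_{\Pu{1}}=\fh_{\wtP_{\,[1]}}$, both being hypersurface rings defined by a quadric, and $\fh_R=\fh_{\wtR}$. So the two Betti tables agree off the single strand $j-i=\ell+1$, and \cref{Betti-strand2} with $a=\ell+1$ gives (1). Semicontinuity is not needed for (1).

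For (2) we argue the same way. By \cref{p:Betti-over-P}(3c), which holds for a general $R$ by \cref{d:newgeneric} and for $\wtR$ by \cref{p:reg ex}, we have $\reg_{\Pu{1}}(A)=\ell$ in both cases. For $j-i>\ell$ the paper's own formula gives $\beta^{\Pu{1}}_{i,j}(A)=\binom{n-1}{i}$ when $j=2i-2$ and zero otherwise. This holds for $A$ and $\wtA$ alike, with the regularity-$\ell$ statement making those strands empty once $j-i>\ell$. By \cref{l: A and R over Pu}(1), the strands $j-i<\ell$ agree as well, both given by $\binom{n-1}{i}$ in degree $j=2i$. Hence the two tables can differ only in the strand $j-i=\ell$. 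Since $\fh_A=\fh_{\wtA}$ and $\fh_{\Pu{1}}=\fh_{\wtP_{\,[1]}}$, \cref{Betti-strand2} with $a=\ell$ gives (2). Alternatively, (2) follows from (1) through the long exact sequence in the proof of \cref{p:Betti-over-P}(3c), since every Betti number there is determined by those of $R$ and of $P$.

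The one point needing care is in (1): the hypothesis of \cref{Betti-strand2} requires agreement in every strand other than $j-i=\ell+1$, including the range $j-i<0$ and the strand $j-i=\ell+2$. The first is trivial because the module is cyclic and generated in degree $0$. The second is exactly the vanishing supplied by \cref{generic reg}. Throughout, we work on the open set of \cref{d:newgeneric}, intersected with the open set on which the Hilbert function of $\Pu{1}$ is the standard one (which always holds for a nonzero quadric $f_1$). I therefore do not expect a serious obstacle.
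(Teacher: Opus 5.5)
Your argument is correct and is essentially the paper's proof. You match the Hilbert functions, show that the Betti tables agree off the strand $j-i=\ell+1$ for $R$ (regularity $\ell+1$ on both sides plus \cref{l: A and R over Pu}(2)) and off the strand $j-i=\ell$ for $A$ (\cref{l: A and R over Pu}(1) below it, \cref{p:Betti-over-P}(3c) above it), and conclude with \cref{Betti-strand2}. The only difference is in the strands $j-i>\ell$ of $\wtA$: you get them from \cref{p:Betti-over-P}(3c) via \cref{p:reg ex}, whereas the paper reads them off \eqref{Betti AQ} and \cref{Betti reduction}. Both are legitimate.

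Two asides should be fixed, although neither carries weight in the argument.

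First, the strands $j-i>\ell$ of $A$ over $\Pu{1}$ are \emph{not} empty. The formula you quote gives $\beta^{\Pu{1}}_{i,2i-2}(A)=\binom{n-1}{i}\neq 0$ for $\ell+3\le i\le n-1$. So for $n\ge 5$ the clause ``$\reg_{\Pu{1}}(A)=\ell$'' in \cref{p:Betti-over-P}(3c) contradicts the formula stated next to it; the proof of (3c) only establishes the formula. Just use the formula, which is the same for $A$ and $\wtA$.

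Second, your ``alternatively'' route is not justified as stated, because dimensions in a long exact sequence are not determined by the dimensions of the neighbouring terms. For example, $\beta^{\Pu{1}}_{\ell,2\ell}(A)$ is the corank of $\Tor^{\Pu{1}}_{\ell+1}(P,\kk)_{2\ell+2}\to\Tor^{\Pu{1}}_{\ell+1}(R,\kk)_{2\ell+2}$, and that rank cannot be read off from Betti numbers.

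Also, the semicontinuity announced in your first line is never used.
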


\begin{proof} 
We may assume that $R$ satisfies the properties listed in \cref{d:newgeneric}. In particular, $P$ is a complete intersection, $f_{n+1}$ is a maximal rank element on $P$ and $\reg_{\Pu{1}}(R)=\ell+1$.
As in \cref{rem: general - hilb}, we have equalities $\fh_A(u)=\fh_{\wtA}(u)$ and  $\fh_R(u)=\fh_{\wtR}(u)$, and we also have $\fh_{\Pu{1}}(u)=\fh_{\wtP_{\,[1]}}(u)$. 
Now in view of \cref{Betti-strand2}, it suffices to prove \eqref{eq:R} when  $j-i\ne \ell+1$ and \eqref{eq:A} when  $j-i\ne \ell$.  

By \cref{p:reg ex}, we have 
\begin{equation}
\label{reg-new}
\reg_{\wtP_{\,[1]}}(\wtR)=\ell+1=\reg_{\Pu{1}}(R).
\end{equation}
It follows that $\beta_{i,j}^{\Pu{1}}(R)=0=\beta_{i,j}^{\wtP_{\,[1]}}(\widetilde R)$ when $j-i>\ell+1$. 
By \cref{ex: wlp} we have that $\widetilde{f}_{n+1}$ is a maximal rank element on  $\widetilde{P}$.  Thus applying \cref{l: A and R over Pu}(2) to $P$ and $\widetilde{P}$ gives $\beta_{i,j}^{\Pu{1}}(R)=\beta_{i,j}^{\wtP_{\,[1]}}(\widetilde R)$ when $j-i\le \ell$. Thus, \cref{Betti-strand2} gives \eqref{eq:R} for all $i,j\ge 0$. 

By \cref{l: A and R over Pu}(1), we have that \eqref{eq:A} holds when $j-i<\ell$. We need to show \eqref{eq:A} holds when $j-i>\ell$.  When $j-i>\ell$,  the following equality follows from \eqref{Betti AQ} and \cref{Betti reduction}:
\begin{align*}
    \beta_{i,j}^{\wtP_{\,[1]}}(\widetilde A)&=
       \begin{cases} 
       \binom{n-1}{i}, &\text{ if } j = 2i-2 \\    
       0,&\text{ otherwise. }
    \end{cases}
\end{align*}

Using \eqref{reg-new} and \cref{p:Betti-over-P}(3c), we conclude that $\beta_{i,j}^{\Pu{1}}(A)=\beta_{i,j}^{\wtP_{\,[1]}}(\widetilde A)$ when $j-i>\ell$, finishing the proof. 
\end{proof}

Now we proceed to give upper bounds on the Betti numbers of $R$ and $A$ over $Q$ when $n$ is odd. Our bounds are given in terms of the Betti numbers of the rings $\wtR$ and $\wtA$; these Betti numbers have been determined in \cite{proceedings}, as recalled below.

\chunk
\label{BettiRA}
For the case of an odd number of variables $n\geq 3$ in the polynomial ring $Q$, the authors of the current paper described the graded Betti numbers of $\wtR$ and of $\wtA$ as $Q$-modules, in \cite[Theorems 4.11 and 4.13]{proceedings}, respectively. 
With the sequence $\{\rho_i(n-1)\}$ as defined in \cref{not: rho} and setting $\rho_i=\rho_i(n-1)$ for all $i$, the Betti table of $\wtR$ over $Q$ is as follows: 

\smallskip
\begin{center}
\setlength{\tabcolsep}{3pt} 
\begin{tabular}{r| c c c c c c c c c c c c c c }
\small{$\beta_{i,j}^Q(\wtR)$} & \small{$0$} & \small{$1$} & \small{$2$} & \small{$3$}  & $\ldots$ & \small{$\ell$} & \small{$\ell+1$} & \small{$\ell+2$} & \small{$\ell+3$} & \small{$\ell+4$} & $\ldots$ & \small{$n-1$} & \small{$n$} \\
\hline
$0$ & $\binom{n+1}{0}$ & - & - & - & $\cdots$  & - & - & - & 
- & - & $\cdots$ & - &-\\ 
$1$ & -  & $\binom{n+1}{1}$ & - & - &$\cdots$ & - & - & - & - & - & $\cdots$ & -  & - \\ 
\vphantom{\huge{A}} \normalsize{$\vdots$}&$\vdots$&$\vdots$&$\vdots$&$\vdots$&$\ddots$&$\vdots$&$\vdots$&$\vdots$&$\vdots$ & $\vdots$&$\vdots$&$\vdots$&$\vdots$ \\ 
\vphantom{\huge{A}} \normalsize{$\ell$} &  - & - & - & - & - &$\binom{n+1}{\ell}$ &- &- & - & - & $\cdots$ & - & - \\
\vphantom{\huge{A}} \normalsize{$\ell+1$} & - & - & $\rho_{2}$ &$\rho_3$ &$\cdots$ & $\rho_{_{\ell}}$ & $\rho_{_{\ell+1}}+\binom{n}{
\ell}$ & $\rho_{_{\ell+2}}$ & $\rho_{_{\ell+3}}$ & $\rho_{_{\ell}}$ &$\ldots$ & $\rho_{2}$ & - \\
$\ell+2$& - & - & - & $\rho_2$ &$\cdots$ &$\rho_{_{\ell-1}}$& $\rho_{_{\ell}}$ & $\rho_{_{\ell+1}}$ & $\rho_{_{\ell+2}}$ & $\rho_{_{\ell+3}}$ & $\cdots$ &  $\rho_{3}$ &$\rho_{2}$ \\
\end{tabular}
\end{center}
\smallskip

With the sequence $\{\gamma_i(n-1)\}$ as defined in \cref{not: gamma} and setting $\gamma_i=\gamma_i(n-1)$ for all $i$, the Betti table of $\wtA$ over $Q$ is as follows: 

\smallskip
\begin{center}
\setlength{\tabcolsep}{3.5pt} 
\begin{tabular}{r| c c c  c c c c c c c c c c c c c}
\footnotesize{$\beta_{i,j}^Q(\wtA)$} & \footnotesize$0$ & \footnotesize$1$  & \footnotesize$2$ &$\cdots$  & \footnotesize$\ell-1$ & \footnotesize$\ell$ & \footnotesize$\ell+1$ &\footnotesize$\ell+2$ &\footnotesize$\ell+3$ &\footnotesize$\ell+4$ & $\cdots$ &\footnotesize$n-2$ & \footnotesize$n-1$& \footnotesize$n$ \\
\hline
\footnotesize\footnotesize\footnotesize$0$ & \footnotesize$\binom{n}{0}$ & - & - & $\cdots$ & - & - & - & - &-&-&$\cdots$ & - & - &- \\ 
\footnotesize\footnotesize$1$ & -  &\footnotesize $\binom{n}{1}$ &-   &$\cdots$ &- &- &- &- &- &-&$\cdots$ &- &- &-\\ 
\vphantom{\huge{I}}\footnotesize$\vdots$&$\vdots$&$\vdots$&$\vdots$&$\ddots$&$\vdots$&$\vdots$&$\vdots$&$\vdots$&$\vdots$&\vdots &$\ddots$&$\vdots$&$\vdots$&$\vdots$\\ 
\vphantom{\huge{I}}\footnotesize$\ell-1$ & - &-  &- & $\cdots$ &\footnotesize$\binom{n}{\ell-1}$ &- &- &-&-&-&$\cdots$ & - & - &-\\
\footnotesize$\ell$& - & $\gamma_{_1}$  &$\gamma_{_2}$&$\cdots$  & $\gamma_{_{\ell-1}}$ & $\gamma_{_{\ell}}$\footnotesize$+\binom{n-1}{\ell-1}$ & $\gamma_{_{\ell+1}}$ &$\gamma_{_{\ell}}$ &$\gamma_{_{\ell-1}}$& $\gamma_{_{\ell-2}}$& $\cdots$ &$\gamma_{_{1}}$ & -&-\\
\vphantom{\huge{I}}\footnotesize$\ell+1$& - &- & $\gamma_{_1}$ &$\cdots$  & $\gamma_{_{\ell-2}}$ & $\gamma_{_{\ell-1}}$ & $\gamma_{_{\ell}}$ &$\gamma_{_{\ell+1}}$ &$\gamma_{_{\ell}}$\footnotesize$+\binom{n-1}{\ell-1}$ &$\gamma_{_{\ell-1}}$& $\cdots$ &$\gamma_{_{2}}$ &$\gamma_{_{1}}$&- \\
\vphantom{\huge{I}}\footnotesize$\ell+2$ &- & - & -&$\cdots$ & - & - &- &-&-&\footnotesize$\binom{n}{\ell+4}$&$\cdots$ & -& -&- \\ 
\vphantom{\huge{I}} \footnotesize\vdots&\vdots &\vdots&\vdots&$\ddots$&\vdots&\vdots&\vdots&\vdots&\vdots&$\vdots$&$\ddots$&\vdots&\vdots&\vdots\\ 
\footnotesize$n-3$ & -   & -  &- &$\cdots$ & - & - & - & - & -&- &$\cdots$ &-&\footnotesize$\binom{n}{n-1}$ & - \\
\footnotesize$n-2$ & -   & - &- &$\cdots$ & - & - & - & - & - &-&$\cdots$ & - &-&\footnotesize$\binom{n}{n}$ \\
\end{tabular}
\end{center}
\smallskip

In view of \cite[Proposition 4.10]{proceedings} and \eqref{e:symm},  the following equalities hold: 
\begin{align}
\beta_{i,j}^Q(\wtR)&=\beta_{i,j}^{\wtP_{\,[1]}}(\wtR)+\beta_{i-1,j-2}^{\wtP_{\,[1]}}(\wtR); \label{t:RAoverQ}\\ 
\beta_{i,j}^Q(\wtA)&=\beta_{i,j}^{\wtP_{\,[1]}}(\wtA)+\beta_{i-1,j-2}^{\wtP_{\,[1]}}(\wtA) 
\label{t:RAoverQ2}.
\end{align} 

\begin{theorem}
\label{p:overQ}
Let $n\geq 3$ be an odd integer, $\kk$ an infinite field of characteristic zero or greater than $n$ and $Q=\kk[x_1, \dots, x_n]$. A general quotient $R$ of $Q$ defined by $n+1$ quadrics $f_1, \dots, f_{n+1}$ satisfies the following, where $A$ is the Gorenstein ring associated to $R$:
\[
\beta_{i,j}^Q(R)\le \beta_{i,j}^Q(\widetilde R)\qquad\text{and}\qquad 
\beta_{i,j}^Q(A)\le \beta_{i,j}^Q(\widetilde A)\, 
\quad\text{for all $i,j\geq 0$,}
\]
where $\beta_{i,j}^Q(\widetilde R)$ and $\beta_{i,j}^Q(\widetilde A)$ are described in \ref{BettiRA}.
\end{theorem}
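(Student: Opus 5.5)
The plan is to pass from $\Pu{1}=Q/(f_1)$ to $Q$, using that $f_1$ is a nonzerodivisor on $Q$ and is contained in the defining ideals of $R$ and $A$. I will apply the standard change-of-rings (Shamash / long exact sequence) bound for a hypersurface: if $M$ is a graded $\Pu{1}$-module, then
\[
\beta_{i,j}^Q(M)\le \beta_{i,j}^{\Pu{1}}(M)+\beta_{i-1,j-2}^{\Pu{1}}(M)\qquad\text{for all } i,j\ge 0.
\]
This follows from the change-of-rings long exact sequence of Tor for $Q\to Q/(f_1)$ with $\deg f_1=2$:
\[
\cdots\to \Tor_{i-1}^{\Pu{1}}(M,\kk)_{j-2}\to \Tor_i^Q(M,\kk)_j\to \Tor_i^{\Pu{1}}(M,\kk)_j\to\cdots
\]
The connecting maps in this sequence are the operator of degree $-2$ coming from the Eisenbud/Shamash construction. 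A cleaner way to see the bound is to take a minimal $\Pu{1}$-resolution $G$ of $M$, lift it to $Q$, and use the standard construction that builds a (not necessarily minimal) $Q$-free resolution of $M$ with $i$th module $\widetilde G_i\oplus \widetilde G_{i-1}(-2)$. Either route gives the stated inequality, and the second is more self-contained.

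I then apply this with $M=R$ and $M=A$, which are $\Pu{1}$-modules since $f_1\in I\subseteq G$. \cref{generic Betti P1} gives, for a general $R$ with $n$ odd,
\[
\beta_{i,j}^{\Pu{1}}(R)=\beta_{i,j}^{\wtP_{\,[1]}}(\wtR)\quad\text{and}\quad \beta_{i,j}^{\Pu{1}}(A)=\beta_{i,j}^{\wtP_{\,[1]}}(\wtA).
\]
Substituting these yields
\[
\beta_{i,j}^Q(R)\le \beta_{i,j}^{\wtP_{\,[1]}}(\wtR)+\beta_{i-1,j-2}^{\wtP_{\,[1]}}(\wtR),
\]
and the right-hand side equals $\beta_{i,j}^Q(\wtR)$ by \eqref{t:RAoverQ}. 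The same argument for $A$, using \eqref{t:RAoverQ2}, gives $\beta_{i,j}^Q(A)\le\beta_{i,j}^Q(\wtA)$.

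The only real content is the change-of-rings inequality, and I expect that to be the main obstacle, though a mild one. The point to verify is the degree bookkeeping. Since $f_1$ has degree $2$, the shift in the second summand must be $(-2)$ in internal degree and $+1$ in homological degree. The construction also has to respect the grading so that it produces a graded resolution; the lifted homotopy $\widetilde G_{i-1}(-2)\to\widetilde G_{i}$ comes from multiplication by $f_1$ being null-homotopic on $\widetilde G\otimes \Pu{1}$. Once that is in place, the rest is direct substitution, and no genericity is needed beyond the open set provided by \cref{generic Betti P1}.
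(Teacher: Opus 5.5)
Your proposal is correct and matches the paper's proof: the paper uses the same hypersurface change-of-rings long exact sequence for $Q\to\Pu{1}$ to get $\beta_{i,j}^Q(R)\le\beta_{i,j}^{\Pu{1}}(R)+\beta_{i-1,j-2}^{\Pu{1}}(R)$, then substitutes \cref{generic Betti P1} and \eqref{t:RAoverQ}, and does the same for $A$ with \eqref{t:RAoverQ2}. There are two harmless slips, and the long exact sequence route you give already suffices. First, the inclusion $I\subseteq G$ is false in general, since it would force $f_{n+1}^2\in J$; all you need is $f_1\in J\subseteq I\cap G$. Second, in the explicit lifted resolution the extra ingredient is a map $\widetilde t$ with $\widetilde\partial^{\,2}=f_1\widetilde t$, not a homotopy for multiplication by $f_1$, which is already zero on $\widetilde G\otimes\Pu{1}$.
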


\begin{proof}
We prove the first inequality, noting that the proof of the second one is similar.

By \cite{eisen}, we have an exact sequence
\begin{equation}
\label{exact-sequence}
\dots\to  \Tor_{i-1}^{\Pu{1}}(R,\kk)_{j-2}\to \Tor_i^{Q}(R,\kk)_j\to \Tor_i^{\Pu{1}}(R,\kk)_j\to \Tor_{i-2}^{\Pu{1}}(R,\kk)_{j-2}\to \dots
\end{equation}
Thus we have the following:
\begin{align*}
\beta_{i,j}^Q(R)&\le \beta_{i,j}^{\Pu{1}}(R)+\beta_{i-1, j-2}^{\Pu{1}}(R)=\beta_{i,j}^{\Pu{1}}(\widetilde R)+\beta_{i-1, j-2}^{\Pu{1}}(\widetilde R)=\beta_{i,j}^Q(\widetilde R),
\end{align*} 
where the inequality follows from \eqref{exact-sequence}, the first equality follows from \cref{generic Betti P1}, and the second equality follows from \eqref{t:RAoverQ}.
\end{proof}

\begin{remark}
\label{r:n=3}
When $n=3$, the inequalities in \cref{p:overQ} are in fact equalities.  Indeed, the Betti tables of $R$ and $A$ over $Q$ are given below: 
\smallskip
\[
\begin{tabular}[t]{r| c c c c }
$\beta_{i,j}^Q(R)$ & $0$ & $1$ & $2$ & $3$  \\
\hline
$0$ & $1$ & - & - & -\\ 
$1$ & -  & $4$ & $2$ & - \\ 
$2$ & - & -  & $3$ &$2$ \\ 
\end{tabular}
\qquad\quad
\begin{tabular}[t]{r| c c c c }
$\beta_{i,j}^Q(A)$ & $0$ & $1$ & $2$ & $3$ \\
\hline
$0$ & $1$ & $2$ & $1$ & -\\ 
$1$ & -  &$1$ &$2$  &$1$ \\
\end{tabular}
\]
\smallskip
and these match the Betti tables of $\widetilde R$ and $\widetilde A$, as can be seen from \cite[Example 4.12, 4.14]{proceedings}. To explain the Betti table of $R$, use \cite[Corollary 4.4]{MM}. To explain the Betti table of $A$, observe that $\fh_A= (1,1)$ by \cref{Hilb R and A},  and hence $A\cong\wtA\cong \kk[x,y,z]/(x^2,y,z)$.  However, when $n\ge 5$ we do not expect equalities to hold in \cref{p:overQ} for all $i$, $j$. 
\end{remark}

In contrast to the case when $n$ is even,
only partial information is known about the Betti numbers of $R$ as in \cref{p:overQ} when $n\geq 5$ is odd; see \cite[Theorem 5.4]{MM}. It is known that $\reg(R)=\ell+2$ and the Betti table of $R$ coincides in the first $\ell$ strands with the Betti table of the Koszul complex on $n+1$ quadrics. The same holds for $\widetilde R$ and in particular, equalities hold in \cref{p:overQ} when $j-i\le \ell$ or $j-i\ge \ell+2$.  On the other hand, the $(\ell+1)$th and $(\ell+2)$th strands of the Betti table of $R$ could potentially have \textit{ghost terms} between them; these are graded free modules that appear as summands in two consecutive terms of the minimal free resolution. In \cite[Conjecture 5.8]{MM}, it is conjectured that there are no ghost terms in these two strands, as is the case when $n=3$. We show in \cref{c:RoverQ-known} that ghost terms cannot occur in the beginning of the resolution of $R$ or at the end, by showing that only one of the entries in the $(\ell+1)$th or $(\ell+2)$th  strand can be nonzero in homological degrees $2$ and $n$.

The next two results collect the known Betti numbers of $R$ and $A$, respectively, together with the new ones that can be deduced from \cref{p:overQ}, and display the resulting Betti tables, where $-$ indicates $0$ and $*$ indicates an undetermined value. 

\begin{corollary}\label{c:RoverQ-known}
With the hypotheses of \cref{p:overQ} when $n\ge 5$, the Betti numbers of $R$ as a $Q$-module satisfy the following:
\begin{enumerate}[\quad$(1)$]
\item $\beta_{i,j}^Q(R)=\binom{n+1}{i}$ when $j=2i$ and $0\leq i \leq\ell$;
\item $\beta_{i,j}^Q(R) = 0$ when $i,j$ satisfy any of the following conditions:
    \begin{enumerate}[\quad$(a)$]
        \item $j-i\le \ell$ and $j \neq 2i$;
        \item $j-i>\ell+2$;
        \item $j-i=\ell+1$ and $i=n$;
        \item $j-i=\ell+2$ and $i=2$.
    \end{enumerate}
\item $\beta_{2, \ell+3}^Q(R)=\begin{cases}
C_{\ell+2} &\mbox{if } n\ge 7\\
C_{3}+ \binom{6}{2}&\mbox{if } n=5
\end{cases}$
\item $\beta_{n,n+\ell+2}^Q(R)=C_{\ell+2}$
\end{enumerate}

In particular,  $R$ is level, with socle in degree $n-\ell-1$, and
$\dim_\kk(\Soc(R))=C_{\ell+2}$. When $n\geq 7$, the Betti table of $R$ as a $Q$-module has the following shape:
\begin{center}
\setlength{\tabcolsep}{3pt} 
\begin{tabular}{r| c c c c c c c c c c c c  }
\small{$\beta_{i,j}^Q(R)$} & \small{$0$} & \small{$1$} & \small{$2$} & \small{$3$}  & $\ldots$ & \small{$\ell$} & \small{$\ell+1$} & \small{$\ell+2$} & $\ldots$ & \small{$n-1$} & \small{$n$} \\
\hline
$0$ & $\binom{n+1}{0}$ & - & - & - & $\cdots$  & - & - & - & $\cdots$ & - &-\\ 
$1$ & -  & $\binom{n+1}{1}$ & - & - &$\cdots$ & - & - & - & $\cdots$ & -  & - \\ 
\vphantom{\huge{A}} \normalsize{$\vdots$} &$\vdots$&$\vdots$&$\vdots$&$\vdots$&$\ddots$&$\vdots$&$\vdots$&$\vdots$&$\vdots$&$\vdots$&$\vdots$ \\ 
\vphantom{\huge{A}} \normalsize{$\ell$} &  - & - & - & - & - &$\binom{n+1}{\ell}$ &- &- & $\cdots$ & - & - \\
\vphantom{\huge{A}} \normalsize{$\ell+1$} & - & - & $C_{\ell+2}$ &$*$&$\cdots$ & $*$ & $*$& $*$  &$\cdots$ & $*$& - \\
$\ell+2$& - & - & - &$*$&$\cdots$ &$*$& $*$& $*$   & $\cdots$& $*$&$C_{\ell+2}$
\end{tabular}
\end{center}
\end{corollary}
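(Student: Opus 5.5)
The plan is to squeeze the Betti table of $R$ between two pieces of information. From above, \cref{p:overQ} gives the entrywise bound $\beta_{i,j}^Q(R)\le \beta_{i,j}^Q(\wtR)$, with the table of $\wtR$ displayed in \ref{BettiRA}. From the other side, $\fh_R=\fh_{\wtR}$ (\cref{rem: general - hilb}), so the recurrence \eqref{Betti-recurrence}, and in particular \ref{Betti-strand2}, pins down any remaining single unknown entry.

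\textbf{Step 1: parts (1) and (2a).} These concern the strands with $j-i\le \ell$. By \cref{l: A and R over Pu}(2) with $u=0$, these strands are exactly those of the Koszul complex on the $n+1$ quadrics. The same conclusion is also recorded in \cite[Theorem 5.4]{MM}.

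\textbf{Step 2: parts (2b), (2c), (2d).} For (2b), I use $\reg_Q(R)=\ell+2$. This holds because $n$ is odd and the socle of $R$ has maximal degree $n-\ell-1=\ell+2$ by \cref{Hilb R and A}(1), exactly as in the proof of \cref{reg}(2). For (2c) and (2d), I read off the table of $\wtR$ in \ref{BettiRA}. Its $(\ell+1)$th strand has entry $0$ in column $n$, and its $(\ell+2)$th strand has entry $0$ in column $2$. Equivalently, by \eqref{t:RAoverQ} and \cref{generic Betti P1}, the corresponding $\wtP_{\,[1]}$-Betti numbers vanish. The upper bound from \cref{p:overQ} then forces $\beta^Q_{n,n+\ell+1}(R)=0$ and $\beta^Q_{2,\ell+4}(R)=0$.

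\textbf{Step 3: parts (3) and (4).} In columns $2$ and $n$, all entries of the table of $R$ are now known except one: $\beta_{2,\ell+3}^Q(R)$ and $\beta_{n,n+\ell+2}^Q(R)$, respectively.

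For (4), I use that the last column of a minimal $Q$-free resolution of the Artinian ring $R$ is dual to its socle: $\beta_{n,j}^Q(R)=\dim_\kk(\Soc R)_{j-n}$. In column $n$, the only possibly nonzero entry is at $j=n+\ell+2$. Indeed, (2a) kills the low strands because $2n\ne j$ there, and (2b), (2c) kill the others. Hence the socle is concentrated in degree $\ell+2=n-\ell-1$, so $R$ is level. Moreover, $\beta_{n,n+\ell+2}^Q(R)=\dim_\kk(\Soc R)_{n-\ell-1}=C_{\ell+2}$ by \cref{Hilb R and A}(3).

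For (3), I apply \eqref{Betti-recurrence} with $k=2$ and $a=\ell+1$. The right-hand side involves only $\fh_R$, $\fh_Q$, and Betti numbers in columns $0,1,2$ off the strand $a=\ell+1$. By Steps 1–2 these coincide for $R$ and $\wtR$, so $\beta_{2,\ell+3}^Q(R)=\beta^Q_{2,\ell+3}(\wtR)$.

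\textbf{Main obstacle.} The remaining issue is to identify this common value explicitly. For $n\ge7$ it is $\rho_2(n-1)$, and one must show $\rho_2(n-1)=C_{\ell+2}$. For $n=5$, column $2=\ell+1$ of the $(\ell+1)$th strand carries the extra Koszul contribution, which gives $C_3+\binom{6}{2}$. To get $C_{\ell+2}$, I would first try \cref{Hilb R and A}(3) together with $\dim_\kk(G/J)_{\ell+1}=C_{\ell+2}$. The idea is that $\beta^P_{2,\ell+3}(R)$ counts minimal generators of $G/J$ in degree $\ell+1$. Failing that, I would simply evaluate the recurrence in \cref{not: rho} at $k=2$, where only boundary binomial terms survive, and check the resulting identity against $\binom{n}{\ell+1}-\binom{n}{\ell+3}$.
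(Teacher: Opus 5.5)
\textbf{What matches the paper.} Your arguments for (1), (2), (4) and for levelness are correct and agree with the paper in substance. The paper takes (1), (2a), (2b), (3), (4) from \cite[Proposition 3.8]{proceedings}, and it gets (2c), (2d) exactly as you do, from \cref{p:overQ} and the table in \ref{BettiRA}. Your socle-duality argument for (4) is a self-contained version of the cited fact.

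\textbf{The gap is in part (3).} Your Step 3 only shows $\beta^Q_{2,\ell+3}(R)=\beta^Q_{2,\ell+3}(\wtR)$. By \ref{BettiRA} this is $\rho_2(n-1)$ for $n\ge 7$ and $\rho_2(4)+\binom{5}{1}$ for $n=5$. You never show that these equal $C_{\ell+2}$ and $C_3+\binom{6}{2}$, and neither of your proposed routes closes this as stated.
\begin{itemize}
\item The count $\beta^P_{2,\ell+3}(R)=\dim_\kk(G/J)_{\ell+1}=C_{\ell+2}$ is over $P$, not $Q$. It does not by itself determine the $Q$-Betti number; for $n=5$ the two even differ ($5$ versus $20$).
\item At $k=2$ the recurrence of \cref{not: rho} does not reduce to boundary terms. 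Since $\rho_0=\rho_1=0$, it reads $\rho_2(2\ell+2)=\sum_{i=0}^{\ell}(-1)^{i+1}\binom{3\ell+4-2i}{2\ell+1}\binom{2\ell+3}{i}$, and every term with $2i\le \ell+3$ is nonzero. For example, $\rho_2(6)=-252+392-126$. So a genuine alternating binomial identity would still have to be proved.
\end{itemize}

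\textbf{Repair via the Hilbert series.} The detour through $\wtR$ is not needed: once (1) and (2) are known, the Hilbert series determines the entry.
\begin{itemize}
\item Since $n=2\ell+3$, \eqref{hilbR} gives $\Hilb_R(z)=(1+z)^n(1-z^2)-\sum_{j\ge \ell+3}\bigl(\binom{n}{j}-\binom{n}{j-2}\bigr)z^j$.
\item Hence the coefficient of $z^{\ell+3}$ in $(1-z)^n\Hilb_R(z)=\sum_{i,j}(-1)^i\beta^Q_{i,j}(R)z^j$ equals $[z^{\ell+3}](1-z^2)^{n+1}+C_{\ell+2}$.
\item By (1), (2a), (2b), and the fact that $I$ is generated by quadrics, only two kinds of Betti numbers in internal degree $\ell+3$ can be nonzero. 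One is $\beta^Q_{2,\ell+3}(R)$. The other, when $\ell+3=2i$ with $i\le \ell$, is the Koszul entry $\binom{n+1}{i}$, which contributes $(-1)^i\binom{n+1}{i}$.
\item For $\ell\ge 2$ this Koszul term cancels $[z^{\ell+3}](1-z^2)^{n+1}$ exactly (both vanish when $\ell+3$ is odd), which gives $C_{\ell+2}$.
\item For $\ell=1$, the coefficient $[z^4](1-z^2)^6=\binom{6}{2}$ lies on the strand $\ell+1$ itself and survives, which gives $C_3+\binom{6}{2}$.
\end{itemize}

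\textbf{Repair via your $G/J$ idea.} This route works if you carry it out over $Q$. Projecting the syzygy module $Z$ of $f_1,\dots,f_{n+1}$ onto its last coordinate gives an exact sequence $0\to Z_J\to Z\to G(-2)\to 0$, where $Z_J$ is generated by Koszul syzygies in degree $4$. Tensoring with $\kk$ gives $\beta^Q_{2,\ell+3}(R)=\dim_\kk\bigl(G/(x_1,\dots,x_n)G\bigr)_{\ell+1}=\dim_\kk(G/J)_{\ell+1}$ for $\ell\ge 2$.

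\textbf{A minor inaccuracy in Step 3.} The last sum in \eqref{Betti-recurrence} runs over all $i\le k+a$. So it also involves columns $i\ge 3$ (which lie in strands $\le\ell$) and the entry $\beta_{1,\ell+3}$ (which is zero because $I$ is generated by quadrics). These terms are harmless, but your description of the right-hand side omits them.
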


\begin{proof}
Since $n$ is odd, we have $n=2\ell+3$. The Betti numbers in (1), (2a), (2b), (3), and (4)  come from  \cite[Proposition 3.8]{proceedings}. In view of \cref{p:overQ}, the Betti numbers in (2c) and (2d) come from the fact that $\beta_{n, n+\ell+1}^Q(\widetilde R)=0$ and $\beta_{2, \ell+4}^Q(\widetilde R)=0$, respectively, which can be observed from the first table in  \Cref{BettiRA}.
\end{proof}
A similar result is given for the ring $A$. 

\begin{corollary}\label{c:AoverQ-known}
With the hypotheses of \cref{p:overQ} when $n\ge 5$, the Betti numbers of $A$ as a $Q$-module satisfy the following:
\begin{enumerate}[\quad$(1)$]
\item $\beta_{i,j}^Q(A)=\binom{n}{i}$ when $i,j$ satisfy any of the following conditions:
\begin{enumerate}[\quad$(a)$]
    \item $j=2i$ and $0\leq i \leq\ell-1$;
    \item $j=2i-2$ and $\ell +4\leq i\leq n$.
\end{enumerate}
\item $\beta_{i,j}^Q(A) = 0$ when $i,j$ satisfy any of the following conditions:
    \begin{enumerate}[\quad$(a)$]
        \item $j-i\le \ell-1$ and $j \neq 2i$;
        \item $j-i>n-2$;
        \item $j-i=\ell$ and $i=n-1$;
        \item $j-i=\ell+1$ and $i=1$.
    \end{enumerate}
\item $\beta_{1, \ell+1}^Q(A)=\begin{cases}
C_{\ell+2} &\mbox{if } n\ge 7\\
5+C_{3}&\mbox{if } n=5
\end{cases}$
\item $\beta_{n-1,n+\ell}^Q(A)=C_{\ell+2}$.
\end{enumerate}
In particular, the defining ideal of $A$ as a quotient of $Q$ is generated in degrees $2$ and $\ell+1$ and its minimal number of generators is $n+C_{\ell+2}$. When $n\geq 7$, the Betti table of $A$ as a $Q$-module has the following shape: 
\begin{center}
\setlength{\tabcolsep}{3.5pt} 
\begin{tabular}{r| c c c  c c c c c c c c c c c c c}
\footnotesize{$\beta_{i,j}^Q(A)$} & \footnotesize$0$ & \footnotesize$1$  & \footnotesize$2$ &$\cdots$  & \footnotesize$\ell-1$ & \footnotesize$\ell$ & \footnotesize$\ell+1$ &\footnotesize$\ell+2$ &\footnotesize$\ell+3$ &\footnotesize$\ell+4$ & $\cdots$ &\footnotesize$n-2$ & \footnotesize$n-1$& \footnotesize$n$ \\
\hline
\footnotesize\footnotesize\footnotesize$0$ & \footnotesize$\binom{n}{0}$ & - & - & $\cdots$ & - & - & - & - &-&-&$\cdots$ & - & - &- \\ 
\footnotesize\footnotesize$1$ & -  &\footnotesize $\binom{n}{1}$ &-   &$\cdots$ &- &- &- &- &- &-&$\cdots$ &- &- &-\\ 
\vphantom{\huge{I}}\footnotesize $\vdots$&$\vdots$&$\vdots$&$\vdots$&$\ddots$&$\vdots$&$\vdots$&$\vdots$&$\vdots$&$\vdots$&$\vdots$ &$\ddots$&$\vdots$&$\vdots$&$\vdots$\\  
\vphantom{\huge{I}}\footnotesize$\ell-1$ & - &-  &- & $\cdots$ &\footnotesize$\binom{n}{\ell-1}$ &- &- &-&-&-&$\cdots$ & - & - &-\\
\footnotesize$\ell$& - & $C_{\ell+2}$  &$*$&$\cdots$  & $*$ & $*$ & $*$ &$*$ &$*$& $*$& $\cdots$ &$*$ & -&-\\
\vphantom{\huge{I}}\footnotesize$\ell+1$& - &- & $*$ &$*$  & $*$ & $*$ & $*$ &$*$ &$*$ &$*$& $\cdots$ &$*$ &$C_{\ell+2}$&- \\
\vphantom{\huge{I}}\footnotesize$\ell+2$ &- & - & -&$\cdots$ & - & - &- &-&-&\footnotesize$\binom{n}{\ell+4}$&$\cdots$ & -& -&- \\ 
\vphantom{\huge{I}} \footnotesize$\vdots$&$\vdots$ &$\vdots$&$\vdots$&$\ddots$&$\vdots$&$\vdots$&$\vdots$&$\vdots$&$\vdots$&$\vdots$&$\ddots$&$\vdots$&$\vdots$&$\vdots$\\ 
\footnotesize$n-3$ & -   & -  &- &$\cdots$ & - & - & - & - & -&- &$\cdots$ &-&\footnotesize$\binom{n}{n-1}$ & - \\
\footnotesize$n-2$ & -   & - &- &$\cdots$ & - & - & - & - & - &-&$\cdots$ & - &-&\footnotesize$\binom{n}{n}$ \\
\end{tabular}
\end{center}
\end{corollary}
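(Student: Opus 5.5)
The proof will mirror that of \cref{c:RoverQ-known}, with $n=2\ell+3$ throughout. Items (1), (2a), (2b), (3) and (4) should be the known values of the Betti numbers of $A$ over $Q$, as recorded in \cite{proceedings} and \cite{MM}. The two new vanishing statements, (2c) and (2d), will come from the upper bound $\beta_{i,j}^Q(A)\le\beta_{i,j}^Q(\wtA)$ of \cref{p:overQ} together with the Betti table of $\wtA$ in \ref{BettiRA}.

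\textbf{Known values.} For (1) and (2a) I will use \cref{l: A and R over Pu}(1) with $u=0$. For (1b) and (2b) I will use Gorenstein duality: $A$ is Gorenstein of codimension $n$ with socle degree $n-2$ by \cref{Hilb R and A}(2a), so $\beta_{i,j}^Q(A)=\beta_{n-i,\,n+(n-2)-j}^Q(A)$. Under this symmetry the linear Koszul part in strands below $\ell$ is carried to the strands above $\ell+1$, which gives $\binom{n}{i}$ at $j=2i-2$ and zero otherwise.

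For (3), $\beta_{1,\ell+1}^Q(A)$ counts minimal generators of $G$ in degree $\ell+1$. This equals $\dim_\kk(G/J)_{\ell+1}=C_{\ell+2}$ from \cref{Hilb R and A}(3), except when $\ell+1=2$. That case is $n=5$, where the $n=5$ quadrics $f_1,\dots,f_5$ also contribute. Dually, (4) follows from (3) by the same symmetry, since $(1,\ell+1)$ corresponds to $(n-1,2n-2-\ell-1)=(n-1,n+\ell)$. I will need to check that the $n=5$ correction does not transfer, as the statement asserts; I expect that when $n=5$ the duals of the quadric generators land in a different spot of the table.

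\textbf{New vanishings.} These come from \cref{p:overQ} and the second table in \ref{BettiRA}. In strand $\ell$ at $i=n-1$ the entry of $\wtA$ is $-$, which gives (2c). In strand $\ell+1$ at $i=1$ the entry of $\wtA$ is $-$, which gives (2d).

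\textbf{Consequences.} The generator degrees follow from (2a), (2d), (2b) and (1a): in homological degree $1$ the only surviving entries are $\beta_{1,2}=n$ and $\beta_{1,\ell+1}$. The minimal number of generators is then $n+C_{\ell+2}$. When $n=5$ the two degrees coincide, since $\ell+1=2$, and the total is $5+C_3$, which still equals $n+C_{\ell+2}$.

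\textbf{Main obstacle.} I expect the delicate step to be (3) and (4) at $n=5$. There I must check that $(G/J)_2$ really adds to the $n$ generators $f_i$ rather than overlapping with them. Because $f_{n+1}$ is a maximal rank element, $\ann_P(f_{n+1})_2$ is independent of $J_2$ in $Q_2$, which gives $\beta_{1,2}^Q(A)=5+C_3$. The table also needs the restriction $n\ge7$ so that strands $\ell-1$, $\ell$, $\ell+1$ and $\ell+2$ are separated as drawn.
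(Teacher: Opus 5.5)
Your handling of (2c) and (2d) is the paper's argument: the upper bound from \cref{p:overQ}, together with the zeros $\beta^Q_{n-1,n-1+\ell}(\wtA)=0=\beta^Q_{1,\ell+2}(\wtA)$ read off from \ref{BettiRA}. For the remaining items the paper simply cites \cite[Proposition 3.8]{proceedings}. You instead rederive them from \cref{l: A and R over Pu}(1) with $u=0$, the Gorenstein symmetry $\beta^Q_{i,j}(A)=\beta^Q_{n-i,\,2n-2-j}(A)$ (socle degree $n-2$), and \ref{Hilb R and A}(2b),(3). That route is sound and more self-contained. In (3) for $n\ge 7$ use $G_\ell=J_\ell$: the degree $\ell+1$ minimal generators are then $G_{\ell+1}/Q_1J_\ell=(G/J)_{\ell+1}$, since $\ell+1\ge 3$. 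Your value $5+C_3$ for (3) at $n=5$ is also correct.

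The step that fails is your planned check of (4) at $n=5$. Your expectation that the duals of the quadratic generators land elsewhere is wrong. Your own symmetry sends $(1,\ell+1)=(1,2)$ to $(n-1,2n-4)=(4,6)=(n-1,n+\ell)$, so
\[
\beta^Q_{4,6}(A)=\beta^Q_{1,2}(A)=\dim_\kk Q_2-\fh_A(2)=15-5=10=5+C_3,
\]
not $C_3=5$. The special ring agrees: by \eqref{t:RAoverQ2} and \ref{Betti reduction},
\[
\beta^Q_{4,6}(\wtA)=\beta^{\ov Q}_{4,6}(\ov A)+\beta^{\ov Q}_{3,4}(\ov A)=1+9=10.
\]
So item (4) as stated is false for $n=5$, and no argument can establish it. It needs the same exception as (3), namely $\beta^Q_{n-1,n+\ell}(A)=5+C_3$ when $n=5$. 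With that correction your duality argument gives (4) immediately for all $n\ge 5$. The paper's citation-only proof does not catch this either.

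A minor point on your last paragraph: $\ann_P(f_{n+1})_2=(G/J)_2$ lives in $P_2=Q_2/J_2$, so there is no independence to check. The equality $\beta^Q_{1,2}(A)=\dim_\kk G_2=\dim_\kk J_2+\dim_\kk (G/J)_2$ holds automatically, and maximal rank is used only to get $\dim_\kk(G/J)_2=C_3$.
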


\begin{proof}
Since $n$ is odd, we have $n=2\ell+3$. The Betti numbers in (1), (2a), (2b), (3), and (4)  come from  \cite[Proposition 3.8]{proceedings}.  In view of \cref{p:overQ}, the Betti numbers in (2c) and (2d) come from the fact that $\beta_{n-1, n-1+\ell}^Q(\widetilde A)=0$ and $\beta_{1, \ell+2}^Q(\widetilde A)=0$, respectively, which can be observed from the second table in \eqref{BettiRA}. 
\end{proof}

While we do not know the exact values of the entries marked $*$ in the Betti tables above, we note that some additional information on existing symmetries comes from the fact that $A$ is Gorenstein, and also from duality for $R$, as pointed out in \cite[Proposition 3.8(4)]{proceedings}. 

\begin{ex}
For $n=7$, we record below a comparison of the bounds on the Betti numbers of $R$ from \cite[Theorem 5.4]{MM} (on the left) with the bounds given by \cref{p:overQ} and \eqref{e:small rho} (on the right). 
\smallskip
\[
\footnotesize{
\setlength{\arraycolsep}{3.9pt}
\begin{array}{r|c c c c c c c c}
 & 0 & 1 & 2 & 3 & 4 & 5 & 6 & 7 \\ 
  \hline 
0 & 1 & - & - & - & - & - & - & - \\ 
1 & - & 8 & - & - & - & - & - & - \\ 
2 & - & - & 28 & - & - & - & - & - \\ 
3 & - & - & 14 & \leq 161 & \leq 196 & \leq 175 & \leq 78 & \leq 14 \\ 
4 & - & - & \leq 14 & \leq 78 & \leq 245 & \leq 196 & \leq 105 & 14 \\ 
\end{array}
\quad\,\,
\begin{array}{r|c c c c c c c c}
  & 0 & 1 & 2 & 3 & 4 & 5 & 6 & 7 \\ 
  \hline 
0 & 1 & - & - & - & - & - & - & - \\ 
1 & - & 8 & - & - & - & - & - & - \\ 
2 & - & - & 28 & - & - & - & - & - \\ 
3 & - & - & 14 & \leq 126 & \leq 132 & \leq 70 & \leq14 & 0 \\ 
4 & - & - & 0 & \leq 14 & \leq 105 & \leq 132 & \leq 70 & 14 \\ 
\end{array}
}
\]
\end{ex}

\section{Appendix}
In this section, let $\kk$ be a field and  $P=\oplus_{i\geq 0}P_i$ and $R=\oplus_{i\geq 0}R_i$  standard graded $\kk$-algebras with $P_0 =\kk=R_0$. Let  $\p\coloneq P_{\geq 1}$ and $\m\coloneq R_{\geq 1}$ be the homogeneous maximal ideals of
$P$ and $R$ respectively. Consider a surjective homomorphism of standard graded $\kk$-algebras
\[
\varkappa\colon (P,\p,\kk)\to (R,\m,\kk).
\]
The main results of this appendix, \cref{l:Massey-lemma} and  \cref{p:Golod-I}, give general criteria for the map $\varkappa$ to be Golod, which are key ingredients of the results in \cref{sec:Poincare}. 

\begin{chunk}
\label{Golod-hom}
Let $(\cD,\partial^{\cD})$ be a minimal free resolution of $\kk$ over $P$ with a graded-commutative  DG-algebra structure; such a resolution always exists; see \cite{Gu68}, \cite{Sch}. Let $(\cA,\partial^\cA)$ be the DG-algebra given by $\cA= R\otimes_P \cD$ and $\partial^\cA=1\otimes_P\partial^\cD$. If $x\in\cA$ is a homogeneous element,  let $|x|$ denote the homological degree of $x$ and set $\ov x=(-1)^{|x|+1}x$. 

 Let $\bsh=\{h_\lambda\}_{\lambda\in \Lambda}$ be a homogeneous basis of the bigraded $\kk$-vector space $\HH_{\ge 1}(\cA)_*$. Each basis element $h_\lambda$,  can be written as  
\[h_\lambda=[z_\lambda],\quad\text{where}\quad z_\lambda\in\cA_{i_\lambda,j_\lambda}\quad \text{is a cycle}.\] 
Thus, $i_\lambda$ denotes the homological degree and $j_\lambda$ denotes the internal degree of $h_\lambda$. 

 If  $h\in \bigsqcup_{n=1}^\infty\bsh^n$ we let $|h|$ denote the integer $n\ge 1$ with $h\in \bsh^n$. For $h=(h_{\lambda_1}, \dots, h_{\lambda_n})\in \bsh^n$ and each  $1\leq k\leq n-1$ we set 
\[
h_{\downarrow k}=(h_{\lambda_1}, \dots, h_{\lambda_k})\in \bsh^k\quad\text{and}\quad h_{\uparrow k}=(h_{\lambda_{k+1}}, \dots, h_{\lambda_n})\in \bsh^{n-k}.
\]

According to Gulliksen \cite{Gu68}, $\varkappa$ is said to be a \emph{Golod homomorphism} if there exists a function
$\mu\colon\bigsqcup_{n=1}^{\infty}\bsh^n\to\cA$, called {\it{trivial Massey operation}},  satisfying:
\begin{align}
\label{eq:tmo1}
\mu(h_\lambda)&
\text{ is a cycle in the homology class of $h_\lambda$
for each } h_\lambda\in\bsh;
   \\
\label{eq:tmo2}
 \dd^\cA\mu(h)&=\sum_{k=1}^{|h|-1}\ov{\mu(h_{\downarrow k})}\mu(h_{\uparrow k}) \quad\text{for each $h\in \textstyle{\bigsqcup_{n=2}^\infty\bsh^n}$\,;}\\
   \label{eq:tmo3}
\mu(\bsh^n)&\subseteq \m\cA\quad\text{for each }n\geq 1\,.
\end{align}
 
If $\mu$  is a trivial Massey operation, then $\mu(h)\in\cA_{i(h),j(h)}$, where  
\begin{equation}
\label{e:ijh}
i(h)=\sum_{k=1}^n i_{\lambda_k}+(n-1)\qquad \text{and}\qquad j(h)=\sum_{k=1}^n j_{\lambda_k}. 
\end{equation}
\end{chunk}
The next result gives a general criterion for constructing a Massey operation on $\mathcal A$. Below, given positive integers $c$, $i$, we consider the map 
\[
\psi_i^c\,\colon\, \HH_i(\cA/\m^{2c}\cA)\to \HH_i(\cA/\m^c\cA)
\]
induced in homology by the canonical projection. 
\begin{lemma}
\label{l:Massey-lemma}
Adopt the notation and hypotheses of \ref{Golod-hom}.  Assume that there exists an integer $c>0$ such that 
\begin{equation}
\label{map0}
\psi_i^c=0\qquad \text{for all $i\ge 1$}, 
\end{equation}
and consider homogeneous cycles $z_\lambda\in \m^c\mathcal A_{i_\lambda, j_\lambda}$ with $h_\lambda=[z_\lambda]$ for all $\lambda\in \Lambda$. 
If for each $h\in \bigsqcup_{n=2}^\infty\bsh^n$ one of the following conditions holds
\begin{enumerate}[\quad $(1)$]
    \item $\HH_{i(h)-1}(\mathcal A)_{j(h)}=0$; or
    \item $z_{\lambda_i}z_{\lambda_j}=0$ for all $i,j\in [|h|]$ with $i\ne j$,
\end{enumerate}
then $\mathcal A$ admits a trivial Massey operation $\mu$ with  $\mu(\mathcal A)\subseteq \m^c\mathcal A$. 
\end{lemma}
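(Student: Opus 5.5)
We construct $\mu$ on $\bsh^n$ by induction on $n=|h|$, keeping the invariant $\mu(h)\in\m^c\cA$ for every $h$. For $n=1$ we set $\mu(h_\lambda)=z_\lambda$; this satisfies \eqref{eq:tmo1}, and since $c\ge 1$ it lies in $\m^c\cA\subseteq\m\cA$, so \eqref{eq:tmo3} holds. Now let $n\ge 2$ and $h\in\bsh^n$, and assume $\mu$ is defined on all tuples of smaller length with values in $\m^c\cA$ and satisfying \eqref{eq:tmo2}. Put
\[
w(h)=\sum_{k=1}^{n-1}\ov{\mu(h_{\downarrow k})}\mu(h_{\uparrow k}).
\]
This element has homological degree $i(h)-1$ and internal degree $j(h)$. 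The standard Massey computation shows it is a cycle: use the Leibniz rule, the inductive identity \eqref{eq:tmo2}, and the sign convention $\ov x=(-1)^{|x|+1}x$; the terms cancel pairwise exactly as in Gulliksen's setup. Each summand is a product of two elements of $\m^c\cA$, so $w(h)\in\m^{2c}\cA$. We must find $\mu(h)\in\m^c\cA$ with $\dd^\cA\mu(h)=w(h)$.

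\textbf{Case (2).} Suppose $z_{\lambda_i}z_{\lambda_j}=0$ for $i\ne j$. We aim to show by induction that all the relevant products vanish, so that $w(h)=0$ and we may take $\mu(h)=0$. To make this work we strengthen the inductive construction: whenever $w(h')=0$ for a shorter tuple $h'$, we choose $\mu(h')=0$. For $|h|=2$ we get $w(h)=\pm z_{\lambda_1}z_{\lambda_2}=0$, so $\mu(h)=0$. For longer $h$ satisfying (2), every sub-tuple $h_{\downarrow k}$ and $h_{\uparrow k}$ of length at least $2$ also satisfies (2), because its entries are entries of $h$ at distinct positions. Hence those $\mu$-values are $0$. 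The only surviving terms would pair two length-one pieces, which happens only when $n=2$, and that case is already handled. So in case (2), $\mu(h)=0\in\m^c\cA$.

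\textbf{Case (1).} Suppose $\HH_{i(h)-1}(\cA)_{j(h)}=0$, so $w(h)=\dd^\cA y$ for some $y$. The difficulty is to pick $y$ inside $\m^c\cA$, and this is where \eqref{map0} enters. The image of $w(h)$ in $\cA/\m^{2c}\cA$ is zero, so it is a cycle there. Consider the short exact sequence of complexes
\[
0\to\m^c\cA/\m^{2c}\cA\to\cA/\m^{2c}\cA\to\cA/\m^c\cA\to 0.
\]
Since $\dd y=w(h)\in\m^{2c}\cA$, the class $\bar y$ is a cycle in $\cA/\m^{2c}\cA$ of homological degree $i(h)\ge 1$. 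By \eqref{map0} its image in $\HH(\cA/\m^c\cA)$ is zero. So there are $u\in\cA_{i(h)+1}$ and $v\in\m^c\cA$ with
\[
y=\dd u+v+m,\qquad m\in\m^{2c}\cA.
\]
Set $y'=y-\dd u=v+m\in\m^c\cA$. Then $\dd y'=w(h)$, and we define $\mu(h)=y'$. This gives \eqref{eq:tmo2} and \eqref{eq:tmo3}, and $\mu(h)\in\m^c\cA$ as required.

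Mixed situations need care. A tuple $h$ may fall under case (1) while some of its sub-tuples fall under case (2), or the reverse. The construction still works: case (1) requires only that the inductive values lie in $\m^c\cA$, which holds for the zero choice too. Case (2) requires the sub-tuples' values to be zero, and that holds because case (2) is inherited by sub-tuples and we always choose $0$ there. If a tuple satisfies both conditions, we choose $0$.

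The main obstacle I expect is the case (1) lifting argument: verifying that \eqref{map0}, applied in the homological degree $i(h)\ge 1$, legitimately lets us replace $y$ by a boundary-corrected element of $\m^c\cA$. The sign bookkeeping showing that $w(h)$ is a cycle is routine.
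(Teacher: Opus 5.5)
Your proposal is correct and follows essentially the same route as the paper. Both arguments proceed by induction on $|h|$ with $\mu(h_\lambda)=z_\lambda$, carry the extra invariant that $\mu$ vanishes on tuples satisfying (2) (so the obstruction $\omega$ is $0$ for longer such tuples), and in case (1) use \eqref{map0} to correct a preimage of $\omega$ by a boundary so that it lies in $\m^c\cA$. Your explicit convention of choosing $\mu(h)=0$ when $h$ satisfies both (1) and (2) is a small clarification of a point the paper's write-up leaves implicit.
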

\begin{proof}
Choose $\bsh=\{h_\lambda\}_{\lambda\in \Lambda}$ a homogeneous basis of the bigraded $\kk$-vector space $\HH_{\ge 1}(\cA)_*$.
The existence of homogeneous cycles $z_\lambda\in \m^c\mathcal A_{i_\lambda, j_\lambda}$ with $h_\lambda=[z_\lambda]$ for all $\lambda\in \Lambda$ follows by \eqref{map0}. 
We construct a trivial Massey operation on $\mathcal A$ by induction. Namely, for $n\ge 1$ and each $h\in\bsh^n$ (meaning $|h|=n$), we construct $\mu(h)$ satisfying conditions \eqref{eq:tmo1}, \eqref{eq:tmo2}, and also the conditions
\begin{gather}
\label{e:ijnew}
\mu(h)\in \m^c\cA_{i(h),j(h)},\\
\label{mu0}
\text{if $|h|\ge 2$ and $h$ satisfies (2), then $\mu(h)=0$.}
\end{gather}
Assume $n=1$. Then, for each $\lambda\in \Lambda$, set $\mu(h_\lambda)=z_\lambda$, where $z_\lambda$ is the cycle chosen above. Clearly, \eqref{eq:tmo1} and \eqref{e:ijnew} hold, and \eqref{mu0} holds vacuously. 

Assume $n\geq 2$ and that $\mu(h)$ has been defined for all  $h\in \bsh^k$ with $1\le k<n$, so that \eqref{eq:tmo1}, \eqref{eq:tmo2}, \eqref{e:ijnew} and \eqref{mu0} hold. Let $h=(h_{\lambda_1}, \dots, h_{\lambda_n})\in \bsh^n$ and set
\begin{equation}
    \label{e:claimnew}
   \omega\colon=\sum_{k=1}^{n-1}\ov{\mu(h_{\downarrow k})}\mu(h_{\uparrow k})\,.
\end{equation}
Using the Leibniz rule, the induction hypothesis, and \eqref{e:ijh} one can check that  \[\omega\in\m^{2c} \mathcal A_{i(h)-1, j(h)}\quad\text{and}\quad \partial^\mathcal A(\omega)=0.\] 

Assume $h$ satisfies (1). The condition $\HH_{i(h)-1}(\mathcal A)_{j(h)}=0$ implies that  $\omega$ is a boundary. Choose  $u\in \mathcal A_{i(h),j(h)}$ with $\partial^\mathcal A(u)=\omega$.  By \eqref{map0}, there exist $v\in \mathcal A_{i(h)+1,j(h)}$ and $u'\in \m^c\mathcal A_{i(h),j(h)}$ such that $u=\partial^{\mathcal A}(v)+u'$, and hence $\partial^{\mathcal A}(u)=\partial^{\mathcal A}(u')$. If we set $\mu(h)=u'$, then \eqref{eq:tmo2} and \eqref{mu0} hold. 

Assume $h$ satisfies (2). If $n=2$, we have  $\omega=z_{\lambda_1}z_{\lambda_2}=0$, and we set $\mu(h)=0$.  If $n>2$, then both $h_{\downarrow k}$  and $h_{\uparrow k}$ satisfy condition (2) for all $1\le k\le n-1$.  Observe that $|h_{\downarrow k}|=k$ and $|h_{\uparrow k}|=n-k$.  By the induction hypothesis applied to $h_{\downarrow k}$  and $h_{\uparrow k}$, we have
\begin{align*}
\mu(h_{\downarrow k})&=0\quad\text{ when } 2\leq k\leq n-1,\text{and}\\
\mu(h_{\uparrow k})&=0\quad\text{ when } 1\leq k\leq n-2.
\end{align*}
As a consequence, we have $\mu(h_{\downarrow k})\mu(h_{\uparrow k})=0$ for all $1\le k\le n-1$, hence $\omega=0$, and we may define $\mu(h)=0$. 
\end{proof}

In the following result, we specialize the previous lemma to fit our purposes. 

\begin{proposition}
\label{p:Golod-I}
Let $\varkappa\colon (P,\p,\kk)\to (R,\m,\kk)$ be a surjective  homomorphism of standard graded $\kk$-algebras. Assume there exists a positive integer $c$ such that, if \[\Tor^P_i(\pi, \kk)\colon\, \Tor^P_i(R/\m^{2c},\kk)\to \Tor^P_i(R/\m^{c}, \kk)\]
denotes the map induced by the canonical projection $\pi\colon R/\m^{2c}\to R/\m^{c}$, then 
\begin{equation}
\label{e:Tors}
\Tor^P_i(\pi, \kk)=0 \quad \text{for all $i\ge 1$}. 
\end{equation}

Then, $\varkappa$ is a Golod homomorphism under any of the following two assumptions. 
\begin{enumerate}[$(I)$]
\item There exists a positive integer $b$ such that for all $i\geq 1$ we have
\begin{equation}
\label{0}
\beta^P_{i,j}(R)=0 \quad \text{if}\quad  j-i<b\quad \text{or}\quad  j-i\ge 2b. 
\end{equation}
\item $\ch(R)\ne 2$ and there exist positive integers $b,i_0,j_0$  with $b\ge j_0-i_0\ge 1$ and $i_0$ odd such that for $i\geq 1$ we have
\begin{align}
 \beta_{i,j}^P(R) &= 0 \quad \text{if\,  } j-i<b \text{ \,and\, } (i,j)\ne (i_0,j_0)  \label{1}\\
 \beta_{i,j}^P(R)&=1  \quad \text{if\,  } (i,j)=(i_0,j_0) \label{2}\\
  \beta_{i,j}^P(R) &= 0 \quad \text{if\,  } j-i\geq b+j_0-i_0,\label{3}
\end{align}
\end{enumerate}
\end{proposition}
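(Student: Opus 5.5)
The plan is to deduce the proposition from \cref{l:Massey-lemma}, using the identification $\HH_i(\cA)_j\cong\Tor_i^P(R,\kk)_j$, which holds because $\cD$ is a free resolution of $\kk$ over $P$ and $\cA=R\otimes_P\cD$. By the same reasoning, for any $c$ we have $\cA/\m^c\cA\cong R/\m^c\otimes_P\cD$, so $\HH_i(\cA/\m^c\cA)\cong\Tor_i^P(R/\m^c,\kk)$. Under these identifications the map $\psi_i^c$ is exactly $\Tor_i^P(\pi,\kk)$. Hypothesis \eqref{e:Tors} is therefore precisely \eqref{map0}.

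The lemma then provides cycles $z_\lambda\in\m^c\cA$ representing a homogeneous basis $\bsh$ of $\HH_{\ge1}(\cA)$. It remains to check that every tuple $h=(h_{\lambda_1},\dots,h_{\lambda_n})$ with $n\ge 2$ satisfies condition (1) or condition (2) of the lemma. We have $i(h)-1=\sum_k i_{\lambda_k}+n-2\ge 1$ and $j(h)=\sum_k j_{\lambda_k}$. Hence
\[
j(h)-(i(h)-1)=\sum_{k=1}^n (j_{\lambda_k}-i_{\lambda_k})-n+2 .
\]
Condition (1) will follow from the Betti vanishing hypotheses once this quantity lands in a range where $\beta^P_{i(h)-1,j(h)}(R)=0$.

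\emph{Case (I).} Every basis element has bidegree with $j_\lambda-i_\lambda\ge b$, by \eqref{0}. Consequently
\[
j(h)-(i(h)-1)\ge nb-n+2=2b+(n-2)(b-1)\ge 2b .
\]
By \eqref{0} this gives $\HH_{i(h)-1}(\cA)_{j(h)}=0$, so condition (1) always holds.

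\emph{Case (II).} Set $d=j_0-i_0$, so $1\le d\le b$. By \eqref{1} and \eqref{2}, there is exactly one basis element $h_0$ with $j-i<b$, namely the one in bidegree $(i_0,j_0)$; all other basis elements have $j-i\ge b$. Suppose $h$ contains $m$ copies of $h_0$ and $n-m$ other entries. We split into three subcases.
\begin{itemize}
\item[$\bullet$] If $1\le m\le n-1$, a direct rearrangement gives
\[
j(h)-(i(h)-1)\ge (n-m)b+md-n+2=b+d+(n-m-1)(b-1)+(m-1)(d-1)\ge b+d .
\]
\item[$\bullet$] If $m=0$, then
\[
j(h)-(i(h)-1)\ge nb-n+2=b+b+(n-2)(b-1)\ge b+d ,
\]
using $d\le b$.
\end{itemize}
In both of these subcases \eqref{3} gives $\HH_{i(h)-1}(\cA)_{j(h)}=0$, which is condition (1).
\begin{itemize}
\item[$\bullet$] If $m=n$, every entry of $h$ is $h_0$, so condition (2) amounts to $z_0^2=0$. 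This holds because $\cA$ is graded-commutative, $z_0$ has odd homological degree $i_0$, and $\ch(R)\neq2$; hence $z_0^2=-z_0^2$ forces $z_0^2=0$.
\end{itemize}

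In all cases the hypotheses of \cref{l:Massey-lemma} are met, so a trivial Massey operation exists and $\varkappa$ is Golod by the definition in \ref{Golod-hom}.

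The only delicate point is the degree bookkeeping in Case (II). One must verify that mixed tuples always reach the vanishing range $j-i\ge b+d$ of \eqref{3}, while the pure tuples $(h_0,\dots,h_0)$, which do not reach it, are exactly those handled by the squaring argument. I expect this to be the step to check most carefully, including the role of $i_0$ odd and $\ch(R)\ne 2$ in the squaring argument.
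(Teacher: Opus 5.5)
Your proposal is correct and follows the paper's proof. You identify \eqref{e:Tors} with \eqref{map0}, verify condition (1) of \cref{l:Massey-lemma} through the lower bound on $j(h)-(i(h)-1)$, and dispose of the tuples made only of the $(i_0,j_0)$ class via graded-commutativity (the paper merges your subcases $m=0$ and $1\le m\le n-1$ by using $j_\lambda-i_\lambda\ge j_0-i_0$ for every $\lambda$). One small wording slip: when $j_0-i_0=b$ that class does not satisfy $j-i<b$, but your argument only uses that it is the unique basis element in bidegree $(i_0,j_0)$ and that all other basis elements have $j-i\ge b$, which is exactly what \eqref{1} and \eqref{2} give.
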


\begin{proof} 
Let $\mathcal A$ be as in \ref{Golod-hom}. 
First, remark that \eqref{e:Tors} is a reformulation of \eqref{map0}, as $\HH_i(\mathcal A/\m^u\mathcal A)=\Tor_i^P(R/\m^u,\kk)$ for all $u\ge 0$ and $i\ge 0$.
As in the proof of \Cref{l:Massey-lemma}, choose $\bsh=\{h_\lambda\}_{\lambda\in \Lambda}$ a homogeneous basis of the bigraded $\kk$-vector space $\HH_{\ge 1}(\cA)_*$ and homogeneous cycle $z_\lambda\in \m^c\mathcal A_{i_\lambda, j_\lambda}$ with $h_\lambda=[z_\lambda]$.  In particular, we have
\begin{equation}
\label{Hn0}
  H_{i_\lambda}(\mathcal A)_{j_\lambda}\not=0,\quad\text{for all }\lambda\in\Lambda.  
\end{equation}

The desired conclusion follows if we prove that for each $h\in\bsh^n$ with $n\geq 2$ one of the  conditions (1) and (2) of  \Cref{l:Massey-lemma} hold.

Assume (I) holds. Using that  $\HH_i(\mathcal A)_j=0$ for all $i,j$ with $j-i<b$, \eqref{Hn0} gives  $b\le j_\lambda-i_\lambda.$ Hence, the equalities in \eqref{e:ijh} yield
\[
j(h)-(i(h)-1)=\sum_{k=1}^n(j_{\lambda_k}-i_{\lambda_k})-(n-2)\ge bn-(n-2)=2b+(b-1)(n-2)\ge 2b,
\]
as $n\geq 2$. Moreover,  by \eqref{0} we also have  $\HH_i(\mathcal A)_j=0$ for all $i,j$ with $j-i\geq 2b$, so $\HH_{i(h)-1}(\mathcal A)_{j(h)}=0$.  Therefore, in this case,  the condition (1) of \Cref{l:Massey-lemma} holds. 

Now assume (II) holds. Conditions \eqref{Hn0} and \eqref{1} now give that 
\begin{equation}
\label{e:2-or}
(i_{\lambda},j_{\lambda})=(i_0,j_0)\quad \text{ or }\quad j_{\lambda}-i_{\lambda}\ge b\ge j_0-i_0\ge 1.
\end{equation}
If $h$ satisfies the condition (2) of \Cref{l:Massey-lemma}, then we are done. Next, if we assume that (2) does not hold, then we prove that $h$ must  satisfy condition (1) of \Cref{l:Massey-lemma}, that is $\HH_{i(h)-1}(\mathcal A)_{j(h)}=0$. 

 Since $\dim_\kk\HH_{i_0}(\mathcal A)_{j_0}=\beta^P_{i_0,j_0}(R)=1$ by \eqref{2},  there exists $\lambda_0\in \Lambda$ such that 
\begin{equation}
\label{e:lambda-i}
(i_{\lambda}, j_{\lambda})=(i_0,j_0)\iff \lambda=\lambda_0. 
\end{equation}

If $\lambda_k=\lambda_0$ for all $1\leq k\leq n$, then $z_{\lambda_i}z_{\lambda_j}=z_{\lambda_0}^2=0$ for all $1\leq i,j\leq n$, due to the graded-commutativity of the DG-algebra $\mathcal A$, since the homological degree $i_0$ of $z_{\lambda_0}$ is odd and $\ch(R)\ne 2$. This contradicts the assumption that condition (2) does not hold. 
Consequently, there exists $1\leq u\leq n$ such that $\lambda_u\ne \lambda_0$, and hence $(i_{\lambda_u}, j_{\lambda_u})\ne (i_0,j_0)$ by \eqref{e:lambda-i}. Then, we have the following (in)equalities:
\begin{align}
\begin{split}
\label{e:2-ineq}
j(h)-(i(h)-1)&=1+\sum_{k\in [n]\smallsetminus \{u\}} (j_{\lambda_k}-i_{\lambda_k})+(j_{\lambda_u}-i_{\lambda_u})-(n-1)\\
&\ge 1+(n-1)(j_0-i_0)+b-(n-1)\\
&= 1+(n-1)(j_0-i_0-1)+b\\
&\ge b+j_0-i_0
\end{split}
\end{align}
where the first equality follows from \eqref{e:ijh}, the first inequality follows from \eqref{e:2-or}, and the last inequality comes from $n\ge 2$. In view of \eqref{3}, it follows that $\HH_{i(h)-1}(\mathcal A)_{j(h)}=0$. We conclude that condition (1) of \cref{l:Massey-lemma} holds. 
 \end{proof}

The next result shows how the condition \eqref{e:Tors} in \Cref{p:Golod-I} can be verified in the setting of this paper. 

\begin{lemma}
\label{l:powers-zero-map}  Let $\kk$ be a field,  $P=\kk[x_1, \dots, x_n]/(f_1, \dots, f_u)$ where $\{f_1, \dots, f_u\}$ is a regular sequence of homogeneous elements of degree $a\ge 2$, and  $\varkappa\colon (P,\p,\kk)\to (R,\m,\kk)$  a surjective  homomorphism of standard graded rings such that $\Ker\varkappa\subseteq\p^a$.  Then,
the induced map
\[
\Tor^P_i(R/\m^{a}, \kk)\to \Tor^P_i(R/\m^{a-1},\kk)\quad\text{is zero for all $i>0$}. 
\]
 
\end{lemma}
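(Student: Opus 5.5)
The plan is to reduce to a statement about powers of the maximal ideal of the polynomial ring and then compute with explicit, naturally compared resolutions over the complete intersection. Write $\n=(x_1,\dots,x_n)\subseteq \kk[x_1,\dots,x_n]=:Q'$ and $J=(f_1,\dots,f_u)$. Since $\Ker\varkappa\subseteq \p^a$ and $J\subseteq \n^a$, we have
\[
R/\m^{a}\cong P/\p^{a}\cong Q'/\n^{a}\qquad\text{and}\qquad R/\m^{a-1}\cong P/\p^{a-1}\cong Q'/\n^{a-1}.
\]
So it suffices to show that the map $\Tor^P_i(Q'/\n^a,\kk)\to\Tor^P_i(Q'/\n^{a-1},\kk)$ induced by the projection is zero for $i>0$. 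Here $Q'/\n^{a}$ and $Q'/\n^{a-1}$ are $Q'$-modules annihilated by $J$.

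\textbf{Step 1: comparison over $Q'$.} Let $F$ and $F'$ be the minimal (linear, Eagon--Northcott type) $Q'$-free resolutions of $Q'/\n^a$ and $Q'/\n^{a-1}$. For $i\ge1$, $F_i$ is generated in degree $a+i-1$ and $F'_i$ in degree $a+i-2$. Hence a comparison map $\varphi\colon F\to F'$ lifting the projection has $\varphi_i(F_i)\subseteq \n F'_i$ for all $i\ge 1$, and $\varphi_0$ is the identity on $F_0=F'_0=Q'$.

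\textbf{Step 2: Shamash resolutions over $P$.} Choose systems of higher homotopies $\sigma$ on $F$ and $\sigma'$ on $F'$ for $f_1,\dots,f_u$. This gives the Shamash/Eisenbud $P$-free resolutions $G=F\otimes_{Q'}P\langle y_1,\dots,y_u\rangle$ and $G'=F'\otimes_{Q'} P\langle y_1,\dots,y_u\rangle$, with $|y_i|=2$ and internal degree $a$.

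I will build a chain map $\Phi\colon G\to G'$ lifting the projection that respects the filtration by $|h|$ (the total divided-power degree). Concretely, $\Phi$ sends $F_iy^{(h)}$ into $\bigoplus_{|h'|\le|h|}F'_jy^{(h')}$, with diagonal part $\varphi_i\otimes 1$. This should follow by the standard inductive construction, since the homotopies only lower $|h|$.

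Degree bookkeeping for an off-diagonal component $F_iy^{(h)}\to F'_jy^{(h')}$, with $i\ge1$ and $d=|h|-|h'|\ge1$ (so $j=i+2d$): the source generators have degree $a-1+i+a|h|$ and the target generators have degree $a-2+i+2d+a|h'|$. The difference is $1+(a-2)d\ge 1$, so these components have entries in $\n$. Combined with Step 1, $\Phi\otimes\kk$ vanishes on every summand $F_iy^{(h)}$ with $i\ge1$.

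\textbf{Step 3: the summands $F_0y^{(h)}$ (the main obstacle).} Here the degree argument fails: for $d=1$ the component $F_0y^{(h)}\to F'_2y^{(h')}$ can have degree $0$. Moreover $G$ is not minimal, because $f_i\in\n^a=\operatorname{im}(F_1\to F_0)$, so $\sigma_i(1)\in F_1$ has a unit coefficient.

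My plan is to show that cycles of $G\otimes\kk$ have zero $F_0y^{(h)}$-components, which suffices because $\Tor^P(Q'/\n^a,\kk)=\HH(G\otimes\kk)$. In $G\otimes\kk$ the differential sends $1\cdot y^{(h)}$ to $\sum_i \overline{\sigma_i(1)}\,y^{(h-e_i)}$. The classes $\overline{\sigma_i(1)}\in F_1\otimes\kk\cong(\n^a)_a$ are the images of $f_1,\dots,f_u$, which are linearly independent because the $f_i$ minimally generate $J$. So these images are linearly independent across all pairs $(h,i)$.

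The other contributions to the $F_1y^{(\ast)}$-components of the differential in $G\otimes\kk$ come from $\partial^F\otimes\kk=0$ (it is minimal) and from higher homotopies out of $F_{\ge2}y^{(\ast)}$ and $F_1y^{(\ast)}$. I will check by degree count that the latter vanish modulo $\n$. It follows that a cycle must have all $F_0y^{(h)}$-coefficients zero (for $h\ne0$, and in positive homological degree $h=0$ does not occur).

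Hence $\Phi\otimes\kk$ kills every cycle in positive homological degree, and the induced map on $\Tor_i^P$ is zero for $i>0$. The delicate points I expect to have to verify carefully are the filtered construction of $\Phi$ and this linear-independence/degree check in Step 3.
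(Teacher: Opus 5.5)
Your proof is correct, and it takes a different route from the paper's. Both arguments start with the same reduction: since $\Ker\varkappa\subseteq\p^a$ and $J\subseteq\n^a$, we have $R/\m^a\cong P/\p^a$ and $R/\m^{a-1}\cong P/\p^{a-1}$.

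\textbf{The paper's route.} The paper then simply cites \cite[Corollary 2.6]{HS}: the map $\Tor^P_i(\p^a,\kk)\to\Tor^P_i(\p^{a-1},\kk)$ induced by the inclusion is zero for all $i\ge 0$. Via the long exact sequences of $0\to\p^{c}\to P\to P/\p^{c}\to 0$, this gives the required vanishing for $i>0$.

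\textbf{Your route.} You prove this vanishing from scratch. You build Eisenbud--Shamash resolutions over $P$ on top of the linear resolutions of $\n^a$ and $\n^{a-1}$ over the polynomial ring. You lift the projection by a map $\Phi$ compatible with the divided-power filtration. The degree count $1+(a-2)d\ge 1$, together with $\varphi_i(F_i)\subseteq\n F'_i$, shows that $\Phi\otimes\kk$ vanishes on every summand $F_iy^{(h)}$ with $i\ge 1$. Finally, linear independence of $f_1,\dots,f_u$ in $Q'_a\cong F_1\otimes\kk$ shows that positive-degree cycles of $G\otimes\kk$ have no $F_0y^{(h)}$-components.

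\textbf{The two points you flagged are fine.}
\begin{itemize}
\item Existence of the filtered lift. Write $\sigma=\sum_g\sigma_g t^g$ with $\sigma^2=\sum_i f_it_i$, and look for $\varphi=\sum_g\varphi_gt^g$ with $\sigma'\varphi=\varphi\sigma$. The lowest-order error term $E_N$ satisfies $\partial'E_N+E_N\partial=0$. So it is a cycle of odd positive degree in $\operatorname{Hom}_{Q'}(F,F')$. Since $H_{>0}(\operatorname{Hom}_{Q'}(F,F'))=0$, it is a boundary, and the induction proceeds.
\item Step 3 is simpler than you expect. No homotopy $\sigma_g$ with $|g|\ge1$ can carry $F_{\ge1}$ into $F_1$, because it raises the $F$-index by $2|g|-1\ge 1$. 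So the only other contribution to the $F_1y^{(\ast)}$-components is $\partial_2\otimes\kk=0$, and no degree count is needed there.
\end{itemize}

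\textbf{Comparison.} Your route is self-contained and shows exactly where the hypotheses enter: all relations have degree exactly $a$, and both $\n^a$ and $\n^{a-1}$ have linear resolutions. The paper's route is a short citation and places the lemma within the vanishing results for maps on $\Tor$ of powers of the maximal ideal that the paper uses again in the Yoneda algebra section.
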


\begin{proof} Since  $R\cong P/\Ker\varkappa$, we have $R/\m^a\cong P/(\Ker\varkappa+\p^a)\cong P/\p^a$. Similarly, we have $R/\m^{a-1}\cong P/\p^{a-1}$, as $\Ker\varkappa\subseteq\p^a\subseteq\p^{a-1}$. The desired conclusion now follows from the fact that the map induced by the inclusion $\p^a\subseteq \p^{a-1}$ 
\[
\Tor^P_i(\p^a,\kk)\to \Tor^P_i(\p^{a-1},\kk)\quad\text{is zero for all $i\geq 0$},
\]
by \cite[Corollary 2.6]{HS}.
\end{proof}

\section*{Acknowledgments} 
The project started at the workshop  “Women in Commutative Algebra II” (WICA II) held at CIRM Trento, Italy, October 16–20, 2023. The authors would like to thank the CIRM and the organizers for the invitation and financial support through the NSF grant DMS–2324929.
Support for attending the workshop also came from the AWM Travel Grants Program funded by NSF Grant DMS-2015440, Clay Mathematics Institute, and COS of Northeastern University.

This material was also partly  supported by the National Science Foundation under Grant No.\@ DMS-1928930 and by the Alfred P. Sloan Foundation under grant G-2021-16778, while the fifth author was in residence at the Simons Laufer Mathematical Sciences Institute (formerly MSRI) in Berkeley, California, during the Spring 2024 semester.  The first author was partially supported by the National Science Foundation Award No. 2418637. 

\bibliographystyle{abbrv}
\bibliography{references}

\end{document}